\newtheorem{theorem}{Theorem}[section]
\newtheorem{corollary}[theorem]{Corollary}
\newtheorem{definition}[theorem]{Definition}
\newtheorem{lemma}[theorem]{Lemma}
\newtheorem{proposition}[theorem]{Proposition}
\newtheorem{remark}[theorem]{Remark}
\def\J#1#2#3{ \left\{ #1,#2,#3 \right\} }
\def\NN{{\mathbb{N}}}
\def\11{\textbf{$1$}}
\def\CC{{\mathbb{C}}}
\begin{document}

\title[On the extension of isometries between the unit spheres]{On the extension of isometries between the unit spheres of a JBW$^*$-triple and a Banach space}

\author[Becerra-Guerrero, Cueto-Avellaneda, Fern\'{a}ndez-Polo, Peralta]{Julio Becerra-Guerrero, Mar{\'i}a Cueto-Avellaneda, Francisco J. Fern\'{a}ndez-Polo, Antonio M. Peralta}

\address[J. Becerra Guerrero, M. Cueto-Avellaneda, F.J. Fern\'{a}ndez-Polo, A.M. Peralta]{Departamento de An{\'a}lisis Matem{\'a}tico, Facultad de
Ciencias, Universidad de Granada, 18071 Granada, Spain.}
\email{juliobg@ugr.es, mcueto@ugr.es, pacopolo@ugr.es, aperalta@ugr.es}


\subjclass[2010]{Primary 47B49, Secondary 46A22, 46B20, 46B04, 46A16, 46E40.}

\keywords{Tingley's problem; Mazur--Ulam property; extension of isometries; JBW$^*$-triples}

\date{}

\begin{abstract} We prove that every JBW$^*$-triple $M$ with rank one or rank bigger than or equal to three satisfies the Mazur--Ulam property, that is, every surjective isometry from the unit sphere of $M$ onto the unit sphere of another Banach space $Y$ extends to a surjective real linear isometry from $M$ onto $Y$. We also show that the same conclusion holds if $M$ is not a JBW$^*$-triple factor, or more generally, if the atomic part of $M^{**}$ is not a rank two Cartan factor.
\end{abstract}

\maketitle
\thispagestyle{empty}

\section{Introduction}

Inspired by the Mazur--Ulam theorem and the positive answers obtained to Tingley's problem in a wide range of concrete spaces, L. Cheng and Y. Dong introduced in \cite{ChenDong2011} the Mazur--Ulam property. A Banach space $X$ satisfies the \emph{Mazur--Ulam property} if for any Banach space $Y$, every surjective isometry $\Delta: S(X)\to S(Y)$ admits an extension to a surjective real linear isometry from $X$ onto $Y$, where $S(X)$ and $S(Y)$ denote the unit spheres of $X$ and $Y$, respectively.\smallskip

The so-called \emph{Tingley's problem} asks if every surjective isometry between the unit spheres of two Banach spaces $X$ and $Y$ admits an extension to a surjective real linear isometry between the spaces. This problem on Banach spaces was first considered by D. Tingley in \cite{Ting1987}. Recent positive solutions to Tingley's problem in concrete settings include surjective isometries $\Delta: S(X)\to S(Y)$ when $X$ and $Y$ are von Neumann algebras \cite{FerPe17d}, compact C$^*$-algebras and JB$^*$-triples \cite{PeTan16} and \cite{FerPe18Adv}, atomic JBW$^*$-triples \cite{FerPe17c}, spaces of trace class operators \cite{FerGarPeVill17}, spaces of $p$-Schatten von Neumann operators with $1\leq p\leq \infty$ \cite{FerJorPer2018}, preduals of von Neumann algebras and the self-adjoint parts of two von Neumann algebras \cite{Mori2017}. 
The reader is referred to the surveys \cite{Ding2009,YangZhao2014,Pe2018} for a more thorough overview on Tingley's problem.\smallskip

The available literature shows that some of the spaces for which Tingley's problem admits a positive solution actually satisfy the stronger Mazur--Ulam property. That is the case of $c_0(\Gamma,\mathbb{R})$, $\ell_{\infty}(\Gamma,\mathbb{R})$ (see \cite[Corollary 2]{Ding07}, \cite[Main Theorem]{Liu2007}), $C(K,\mathbb{R})$ where $K$ is a compact Hausdorff space \cite[Corollary 6]{Liu2007}, $L^{p}((\Omega, \mu), \mathbb{R})$ where  $(\Omega, \mu)$ is a $\sigma$-finite measure space and $1\leq p\leq \infty$ \cite{Ta:1,Ta:8,Ta:p}, almost-CL-spaces admitting a smooth point \cite{THL,TL}, $c_0(\Gamma)= c_0(\Gamma,\mathbb{C})$ \cite{JVMorPeRa2017}, $\ell_{\infty}(\Gamma,\mathbb{C})$ \cite{Pe2017}, and commutative von Neumann algebras \cite{CuePer}. The list has been widen in a very recent result by M. Mori and N. Ozawa in \cite{MoriOza2018} where they prove that every unital complex C$^*$-algebra and every real von Neumann algebra satisfies the Mazur--Ulam property.\smallskip

Our main goal in this note is to establish a version of the results by M. Mori and N. Ozawa in the setting of JBW$^*$-triples (see section \ref{sec:2} for concrete definitions and examples). In our principal result (see Theorem \ref{t JBW-triples satisfy the MUP} and Proposition \ref{p rank one JBW-triples}) we prove that every JBW$^*$-triple with rank one or rank bigger than or equal to three satisfies the Mazur--Ulam property. We actually prove a stronger result by showing that if $M$ is a JBW$^*$-triple such that the atomic part of $M^{**}$ is not a Cartan factor of rank two (in particular when $M$ is not a factor), then $M$ satisfies the Mazur--Ulam property (see Remark \ref{remark final}).\smallskip

The starting point in our arguments is Corollary \ref{p complex JBW*-triples have the strong Mankiewicz property} where we check, by applying a result  due to M. Mori and N. Ozawa \cite{MoriOza2018}, that the closed unit ball of a JBW$^*$-triple satisfies the strong Mankiewicz property.\smallskip

In section \ref{sec:3} we deepen our knowledge on a class of faces of the closed unit ball of the bidual of a JB$^*$-triple which remained unexplored until now. The main result in \cite{EdFerHosPe2010} shows that the proper norm closed faces of the closed unit ball, $\mathcal{B}_E$, of a JB$^*$-triple, $E$, are in one-to-one correspondence with those compact tripotents in $E^{**}$. A preceding result due to C.M. Edwards and G.T. R\"{u}ttimann assures that weak$^*$ closed proper faces of the closed unit ball of $E^{**}$ are in one-to-one correspondence with the set of tripotents in $E^{**}$ (see \cite{EdRutt88}). On the other hand, following the notation in \cite[\S 2]{FerPe06}, we shall say that a set $S\subseteq E^{**}$ is \emph{open relative to $E$} if $S\cap E$ is $\sigma(E^{**},E^*)$ dense in $\overline{S}^{\sigma(E^{**},E^*)}.$ It seems natural to ask whether relatively open faces in the closed unit ball of $E^{**}$ can be characterized in terms of a set of tripotents in $E^{**}$. We shall show in Theorem \ref{t characterization relatively open weak* closed faces} that a proper weak$^*$ closed face of the closed unit ball of $E^{**}$ is open relative to $E$ if and only if it is a weak$^*$ closed face associated with a compact tripotent in $E^{**}$.\smallskip

Let $E$ be a JB$^*$-triple. The characterization of those proper weak$^*$ closed faces of the closed unit ball of $E^{**}$ which are open relative to $E$ in terms of the compact tripotents in $E^{**}$ is applied to establish that if $\Delta : S(M)\to S(Y)$ is a surjective isometry, where $M$ is a JBW$^*$-triple and $Y$ is a Banach space, then the restriction of $\Delta$ to each norm closed proper face of $\mathcal{B}_{M}$ is an affine mapping (see Proposition \ref{p affinity on closed faces}).

\section{Background on JB$^*$-triples and the strong Mankiewicz property}\label{sec:2}

Along this paper, given a complex Banach space $X,$ its underlying real Banach space will be denoted by the same symbol $X$ or by $X_{\mathbb{R}}$ in case of ambiguity. It is well known that $\varphi\mapsto \Re\hbox{e}\varphi$ is an isometric bijection from $(X^*)_{\mathbb{R}}$ onto $(X_{\mathbb{R}})^*.$ If $X$ is a real or complex Banach space, the symbol $\mathcal{B}_{X}$ will stand for the closed unit ball of $X$, while $S(X)$ will denote the unit sphere of $X$. We shall frequently regard $X$ as being contained in
$X^{**}$ and we identify the weak$^*$-closure in $X^{**}$ of a closed subspace $Y$ of $X$ with $Y^{**}$.\smallskip

A convex subset $K$ of a normed space $X$ is called a \emph{convex body} if it has non-empty interior in $X$. The Mazur--Ulam theorem was extended by P. Mankiewicz in \cite{Mank1972} by showing that every surjective isometry between convex bodies in two arbitrary normed spaces can be uniquely extended to an affine function between the spaces. This result is one of the main tools applied in those papers devoted to explore new progress to Tingley's problem and to determine new Banach spaces satisfying the Mazur--Ulam property.\smallskip

In a very recent paper by M. Mori and N. Ozawa (see \cite{MoriOza2018}), a new technical achievement has burst into the scene of the current research on those Banach spaces satisfying the Mazur--Ulam property. Following these authors, we shall say that a convex subset $K$ of a normed space $X$ satisfies the \emph{strong Mankiewicz property} if every surjective isometry $\Delta$ from $K$ onto an arbitrary convex subset $L$ in a normed space $Y$ is affine. As observed by Mori and Ozawa, every convex subset of a strictly convex normed space satisfies the strong Mankiewicz property because it is uniquely geodesic (see \cite[Lemma 6.1]{BadFurGeMo2007}), and there exist examples of convex subsets of $L^1[0, 1]$ which do not satisfy this property (see \cite[Example 5]{MoriOza2018}). 
In \cite[Theorem 2]{MoriOza2018} Mori and Ozawa show that some of the hypotheses in Mankiewicz's theorem can be somehow relaxed. The following result has been borrowed from \cite[Theorem 2 and its proof]{MoriOza2018}.

\begin{theorem}\label{t Mori--Ozawa strong Mankiewicz}\cite[Theorem 2]{MoriOza2018} Let $X$ be a Banach space such that the closed convex hull of the extreme points, $\partial_e (\mathcal{B}_X),$ of the closed unit ball, $\mathcal{B}_X$, of $X$ has non-empty interior in $X$. Then every
convex body $K\subset X$ satisfies the strong Mankiewicz property. Furthermore, suppose $L$ is a convex subset of a normed space $Y$, and $\Delta : \mathcal{B}_{X}\to L$ is a surjective isometry. Then $\Delta$ can be uniquely extended to an affine isometry from $X$ onto a norm closed subspace of $Y$.$\hfill\Box$
\end{theorem}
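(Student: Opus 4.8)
The plan is to reduce the statement to the case $K=\mathcal{B}_X$, to isolate a rigidity property of the metric midpoints of $\mathcal{B}_X$ that sit at extreme points, and then to bootstrap from that to full affinity and the desired linear extension. For the reduction: every convex body $K\subset X$ contains a closed ball $B=a+\rho\,\mathcal{B}_X$, the hypothesis on $X$ is inherited by the affinely equivalent body $B$ (its extreme points are $a+\rho\,\partial_e(\mathcal{B}_X)$), and the restriction to $B$ of a surjective isometry $\Delta\colon K\to L$ is a surjective isometry onto $L\cap\bigl(\Delta(a)+\rho\,\mathcal{B}_Y\bigr)$, which is again convex; so it is enough to prove the result for $K=\mathcal{B}_X$ and then propagate affinity from an inscribed ball to $K$ by a standard connectedness argument. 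Thus let $\Delta\colon\mathcal{B}_X\to L$ be a surjective isometry onto a convex subset $L$ of a normed space $Y$; after a translation take $\Delta(0)=0$, and replace $Y$ by $\overline{\mathrm{span}}(L)$. Since $\partial_e(\mathcal{B}_X)$ is symmetric, $\overline{\mathrm{conv}}\,\partial_e(\mathcal{B}_X)$ is a symmetric convex body, so $\delta\,\mathcal{B}_X\subseteq\overline{\mathrm{conv}}\,\partial_e(\mathcal{B}_X)$ for some $\delta>0$. The goal is that $\Delta$ be the restriction of a linear isometry; the extension to $X$ and its uniqueness then follow by homogeneity and the density of $\mathcal{B}_X$, and the range of the extension is closed, being an isometric image of $X$.

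The key elementary fact is a midpoint rigidity at extreme points. For $e\in\partial_e(\mathcal{B}_X)$ and $0<\lambda\le 1$, the set of metric midpoints of $0$ and $\lambda e$ inside $\mathcal{B}_X$, that is $\{z\in\mathcal{B}_X:\|z\|=\|z-\lambda e\|=\lambda/2\}$, equals $\{\lambda e/2\}$: if $z$ lies in it, then $u:=\tfrac{2}{\lambda}z$ and $v:=\tfrac{2}{\lambda}(\lambda e-z)$ are norm-one vectors with $\tfrac12(u+v)=e$, so extremality of $e$ forces $u=v=e$, hence $z=\lambda e/2$. The same computation gives $\{z\in\mathcal{B}_X:\|z-\mu e\|=\|z-\lambda e\|=\tfrac12(\lambda-\mu)\}=\{\tfrac{\mu+\lambda}{2}\,e\}$ for $0\le\mu<\lambda\le 1$, and $\{z\in\mathcal{B}_X:\|z-e\|=\|z+e\|=1\}=\{0\}$. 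Since $\Delta$ is a surjective isometry with $\Delta(0)=0$, it carries the metric-midpoint set of $0$ and $\lambda e$ bijectively onto that of $0$ and $\Delta(\lambda e)$ inside $L$; the former is a singleton, so the latter is a singleton too, and as $L$ is convex it must be $\{\tfrac12\Delta(\lambda e)\}$, whence $\Delta(\lambda e/2)=\tfrac12\Delta(\lambda e)$. Iterating, and using the rigidity of the midpoint sets $\mathrm{Mid}_{\mathcal{B}_X}(\mu e,\lambda e)$ to treat all dyadic combinations, one obtains $\Delta(t\lambda e)=t\,\Delta(\lambda e)$ for every dyadic $t\in[0,1]$, hence for all $t\in[0,1]$ by continuity of $\Delta$; the rigidity of $\mathrm{Mid}_{\mathcal{B}_X}(e,-e)$ gives $\Delta(-e)=-\Delta(e)$, and applying the same conclusion to $-e$ yields $\Delta(te)=t\,\Delta(e)$ for all $t\in[-1,1]$ and all $e\in\partial_e(\mathcal{B}_X)$. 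In particular $\Delta$ is odd on $\partial_e(\mathcal{B}_X)$ and maps $\partial_e(\mathcal{B}_X)$ into $L$.

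It remains to promote ``affine along every extreme direction'' to ``affine on $\mathcal{B}_X$'', and this is where I expect the main obstacle to lie. The route I would take is to show that $L=\Delta(\mathcal{B}_X)$ is in fact a convex body in $Y=\overline{\mathrm{span}}(L)$ and then invoke Mankiewicz's classical theorem \cite{Mank1972} (equivalently, the convex-body part of the present statement): one knows that $L$ contains the closed convex hull of $\Delta\bigl(\partial_e(\mathcal{B}_X)\bigr)$, and one must argue, from the inclusion $\delta\,\mathcal{B}_X\subseteq\overline{\mathrm{conv}}\,\partial_e(\mathcal{B}_X)$ together with the midpoint rigidity above, that this hull has non-empty interior in $Y$; once $\Delta$ is affine on $\mathcal{B}_X$ the remaining claims are immediate. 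The difficulty is genuine: a purely metric argument is unavailable, since the metric-midpoint set of a generic pair of points of $\mathcal{B}_X$ is far from a singleton and $L$ is a priori only convex, not a convex body, so one cannot differentiate or use openness in $Y$ as in the classical proof. The point is precisely that ``the closed convex hull of the extreme points has non-empty interior'' is not a metric invariant, so transporting it across $\Delta$ --- which is not yet known to be affine --- requires extracting everything one can from the extreme-point midpoint rigidity together with a careful span and approximation argument, rather than any soft reasoning.
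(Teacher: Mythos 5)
There is a genuine gap, and you have identified its location yourself: everything after ``it remains to promote affine along every extreme direction to affine on $\mathcal{B}_X$'' is a plan, not a proof, and the route you sketch (show that $L$ is a convex body in $\overline{\mathrm{span}}(L)$ and then invoke classical Mankiewicz) is precisely the step you concede you cannot carry out. Since the paper states this theorem as a quotation of \cite[Theorem 2]{MoriOza2018} without proof, the right comparison is with Mori--Ozawa's argument, and the missing idea there is small but decisive: your midpoint rigidity must be applied at an \emph{arbitrary} base point, not only along the ray $\mathbb{R}e$ through the origin. For any $x\in X$, any $u\in\partial_e(\mathcal{B}_X)$ and any $t>0$, if $\|z-(x-tu)\|=\|z-(x+tu)\|=t$ then, writing $z=x+w$, the vectors $(w+tu)/t$ and $(tu-w)/t$ lie in $\mathcal{B}_X$ and average to $u$, so extremality forces $w=0$; thus $x$ is the \emph{unique} point of $X$ (not merely of $\mathcal{B}_X$) equidistant at distance $t$ from $x\pm tu$. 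Transporting this through $\Delta$ exactly as you do at the origin --- convexity of $L$ supplies the candidate midpoint $\tfrac12\bigl(\Delta(x-tu)+\Delta(x+tu)\bigr)$, and applying $\Delta^{-1}$ identifies it with $\Delta(x)$ --- yields $\Delta(x)=\tfrac12\bigl(\Delta(x-tu)+\Delta(x+tu)\bigr)$ whenever $x\pm tu\in\mathcal{B}_X$. Hence $\Delta$ is affine on every chord of $\mathcal{B}_X$ parallel to an extreme direction, through every interior point, which is strictly stronger than what you prove (affinity only on the segments $[\mu e,\lambda e]\subset\mathbb{R}e$).

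With that strengthened rigidity in hand, the hypothesis $\delta\,\mathcal{B}_X\subseteq\overline{\mathrm{conv}}\,\partial_e(\mathcal{B}_X)$ is used on the \emph{domain} side only: a small displacement $v$ at an interior point can be approximated by positive combinations $\sum_i t_i u_i$ of extreme directions, and chaining the chord-affinity along the corresponding polygonal paths (together with the $1$-Lipschitz continuity of $\Delta$ to control the approximation error) gives that $\Delta$ is affine on a neighbourhood of each interior point, hence on $\mathcal{B}_X$ by convexity and continuity. No statement about the interior of $L$, or about $L$ being a convex body, is ever needed --- which is fortunate, because transporting ``the closed convex hull of the extreme points has non-empty interior'' across a map not yet known to be affine is, as you observe, not a metric argument and is essentially circular. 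So the verdict is: your opening reduction and your origin-based midpoint rigidity are correct, but the proof is incomplete at its central step, and the repair is not the convex-body detour you propose but the translated form of the rigidity lemma you already have.
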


The celebrated Russo--Dye theorem (see \cite{RuDye}) assures that every (complex) unital C$^*$-algebra satisfies the hypotheses in the previous theorem. Actually, Mori and Ozawa show that any Banach space in the class of real von Neumann algebras also satisfies the desired hypotheses (see \cite[Corollary 3]{MoriOza2018}). This can be also deduced from the real version of the Russo--Dye theorem, established by J.C. Navarro and M.A. Navarro in \cite[Corollary 6]{NavNav2012}, which asserts that the open unit ball of a real von Neumann algebra $A$ is contained in the sequentially convex hull of the set of unitary elements in $A$. As pointed out by Mori and Ozawa in \cite[Proof of Corollary 3]{MoriOza2018}, the latter conclusion can be deduced from a result due to B. Li (see \cite[Theorem 7.2.4]{Li70}).\smallskip

Let us continue this section by adding some new examples of Banach spaces fulfilling the hypotheses in the Mori--Ozawa Theorem \ref{t Mori--Ozawa strong Mankiewicz}. 
Henceforth, let $B(H,K)$ denote the Banach space of all bounded linear operators between two complex Hilbert spaces $H$ and $K$. A J$^*$-algebra in the sense introduced by L.A. Harris in \cite{Harris74} is a closed complex subspace $E$ of $B(H,K)$ such that $a a^* a\in E$ whenever $a\in E$. Harris proved in \cite[Corollary 2]{Harris74} that the open unit ball, $\stackrel{\circ}{\mathcal{B}}_{E},$ of every J$^*$-algebra $E$ is a bounded symmetric domain (i.e. for each $x\in \stackrel{\circ}{\mathcal{B}}_{E}$ there exists a biholomorphic mapping in Fr\'{e}chet's sense $h: \stackrel{\circ}{\mathcal{B}}_{E}\to \stackrel{\circ}{\mathcal{B}}_{E}$ such that $h$ has $x$ as its only fixed point and $h^2 $ is the identity map on $\stackrel{\circ}{\mathcal{B}}_{E}$). However, J$^*$-algebras are not the unique complex Banach spaces whose open unit ball is a bounded symmetric domain. W. Kaup established in \cite{Ka83} that the open unit ball of a complex Banach space $E$ is a bounded symmetric domain if and only if $E$ is a JB$^*$-triple, that is, there exists a continuous triple product $\J ... :
E\times E\times E \to E,$ which is symmetric and linear in the
first and third variables, conjugate linear in the second variable,
and satisfies the following axioms:
\begin{enumerate}[{\rm (a)}] \item (Jordan identity) $L(a,b) L(x,y) = L(x,y) L(a,b) + L(L(a,b)x,y)
 - L(x,L(b,a)y),$ for every $a,b,x,y$ in $E$, where $L(a,b)$ is the operator on $E$ given by $L(a,b) x = \J abx;$
\item $L(a,a)$ is a hermitian operator with non-negative spectrum for all $a\in E$;
\item $\|\{a,a,a\}\| = \|a\|^3$ for each $a\in E$.\end{enumerate}

Every J$^*$-algebra is a JB$^*$-triple with respect to the triple product given by
\begin{equation}\label{eq product operators} \J xyz =\frac12 (x y^* z +z y^*
x).\end{equation} Consequently, C$^*$-algebras and complex Hilbert spaces are JB$^*$-triples with respect to the above triple product. Another interesting examples are given by Jordan structures; for example every JB$^*$-algebra in the sense considered in \cite{Wri77,WriYou77} and \cite{Sidd2007,Sidd2010} are JB$^*$-triples under the triple
product \begin{equation}\label{eq triple product JBstaralg} \J xyz = (x\circ y^*) \circ z + (z\circ y^*)\circ x -
(x\circ z)\circ y^*.
\end{equation}

Another milestone result in the theory of JB$^*$-triples is the so-called \emph{Kaup-Banach-Stone theorem}, established by W. Kaup in \cite[Proposition 5.5]{Ka83}, which proves that a linear bijection between JB$^*$-triples is an isometry if and only if it is a triple isomorphism.\smallskip

A JBW$^*$-triple is a JB$^*$-triple which is also a dual Banach space (with a unique isometric predual \cite{BarTi86}). It is known
that the second dual of a JB$^*$-triple is a JBW$^*$-triple (compare \cite{Di86}). An extension of Sakai's theorem assures that the triple product of every JBW$^*$-triple is separately weak$^*$ continuous (cf. \cite{BarTi86} or \cite{Horn87}).\smallskip

We shall only recall some basic facts and results in the theory of JB$^*$-triples. Let $A$ be a C$^*$-algebra regarded as a JB$^*$-triple with the product given in \eqref{eq product operators}. It is easy to see that partial isometries in $A$ are precisely those elements $e$ in $A$ such that $\{e,e,e\}=e$. An element $e$ in a JB$^*$-triple $E$ is said to be a \emph{tripotent} if $\{e,e,e\} =e$. The extreme points of the closed unit ball of a JB$^*$-triple can only be understood in terms of those tripotents satisfying an additional property. For each tripotent $e$ in $E$ the eigenvalues of the operator $L(e,e)$ are contained in the set $\{0,1/2,1\},$ and $E$ can be decomposed in the form  $$E= E_{2} (e) \oplus E_{1} (e) \oplus E_0 (e),$$ where for
$i=0,1,2,$ $E_i (e)$ is the $\frac{i}{2}$ eigenspace of $L(e,e)$. This decomposition is known as the \emph{Peirce decomposition}\label{eq Peirce decomposition} associated with $e$. The so-called \emph{Peirce arithmetic} affirms that for every $i,j,k\in\{0,1,2\}$ we have \begin{enumerate}[$\bullet$]
\item $\J {E_{i}(e)}{E_{j} (e)}{E_{k} (e)}\subseteq E_{i-j+k} (e)$ if $i-j+k$ belongs to the set  $\{ 0,1,2\},$
and $\J {E_{i}(e)}{E_{j} (e)}{E_{k} (e)}=\{0\}$ otherwise;
\item $\{ E_{2} (e), E_{0}(e), E\} = \{ E_{0} (e), E_{2}(e), E\} =0.$
\end{enumerate}
For $k\in \{0,1,2\}$, the projection $P_{k_{}}(e)$ of $E$ onto $E_{k} (e)$ is called the Peirce $k$-projection. It is known that Peirce projections are contractive (cf. \cite{FriRu85}) and satisfy that $P_{2}(e) = Q(e)^2,$ $P_{1}(e) =2(L(e,e)-Q(e)^2),$ and $P_{0}(e) =Id_E - 2 L(e,e) + Q(e)^2,$ where for each $a\in E$, $Q(a):E\to E$ is the conjugate linear map given by $Q(a) (x) =\{a,x,a\}$.
A tripotent $e$ in $E$ is called \emph{unitary} (respectively, \emph{complete} or \emph{maximal}) if $E_2 (e) = E$ (respectively, $E_0 (e) =\{0\}$). Finally, a tripotent $e$ in $E$ is said to be \emph{minimal} if $E_2(e)=\CC e \neq \{0\}$. 
\smallskip

Additional properties of the Peirce decomposition assure that the Peirce space $E_2 (e)$ is a unital JB$^*$-algebra with unit $e$,
product $x\circ_e y := \J xey$ and involution $x^{*_e} := \J exe$, respectively. It follows from Kaup-Banach-Stone theorem that the triple product in $E_2 (e)$ is uniquely determined by the identity $$\{ a,b,c\} = (a \circ_e b^{*_e}) \circ_e c + (c\circ_e b^{*_e}) \circ_e a - (a\circ_e c) \circ_e b^{*_e}, \ \ \ \ (\forall a,b,c\in E_2 (e)).$$ Furthermore, for each $x\in E$ the element \begin{equation}\label{eq positive element FR}\hbox{$P_2(e) \{x,x,e\} =  \{P_2(e) (x),P_2(e)(x),e\} + \{P_1(e) (x),P_1(e)(x),e\}$}
 \end{equation}  is positive in $E_2(e)$, and $P_2(e) \{x,x,e\} =0$ if and only if $P_j(e) (x) =0$ for every $j=1,2$ (see \cite[Lemma 1.5 and preceding comments]{FriRu85}
).\smallskip

Elements $a,b$ in a JB$^*$-triple $E$ are called \emph{orthogonal} (written $a\perp b$) if $L(a,b) =0$. It is known that $a\perp b$ $\Leftrightarrow$ $\J aab =0$ $\Leftrightarrow$ $\{b,b,a\}=0$ $\Leftrightarrow$ $b\perp a;$ (see, for example, \cite[Lemma 1]{BurFerGarMarPe}). Let $e$ be a tripotent in $E$. It follows from the Peirce arithmetic that $a\perp b$ for every $a\in E_2(e)$ and every $b\in E_0(e)$.
\smallskip

The rank of a JB$^*$-triple $E$ is the minimal cardinal number $r$ satisfying $\hbox{card}(S)\leq r$ whenever $S$ is an orthogonal subset of
$E$, that is, $0\notin S$ and $x\perp y$ for every $x\neq y$ in $S$. \smallskip

We shall consider the following partial order on the set of tripotents of a JB$^*$-triple $E$ defined by  $u \leq e$ if $e-u$ is a tripotent in $E$ and $e-u \perp u$. It is known that $u\leq e$ if and only if $u$ is a projection in the JB$^*$-algebra $E_2(e)$.\smallskip

Similarly as there exist C$^*$-algebras containing no non-zero projections, we can find JB$^*$-triples containing no non-trivial tripotents. Another geometric property of JB$^*$-triples provides an algebraic characterization of the extreme points of their closed unit balls. Concretely, the (complex and the real) extreme points of the closed unit ball of a JB$^*$-triple $E$ are precisely the complete tripotents in $E$, that is \begin{equation}\label{eq extreme points and complete tripotents} \partial_e(\mathcal{B}_{E}) =\{ \hbox{complete tripotents in } E \},
\end{equation} (cf. \cite[Lemma 4.1]{BraKaUp78} and \cite[Proposition 3.5]{KaUp77}).\smallskip

An element $u$ in a unital C$^*$-algebra $A$ is called \emph{unitary} if $u u^* = u^* u ={1}$.  It is known that an element $u$ in a JB$^*$-algebra $B$ is a unitary tripotent if and only if $u$ is Jordan invertible in $B$ and its unique Jordan inverse in $B$ coincides with $u^*$ (compare \cite{WriYou77} and \cite{Sidd2007,Sidd2010}). If a unital C$^*$-algebra $A$ is regarded as a JB$^*$-algebra with the natural Jordan product given by $a\circ b := \frac12 (a b + ba)$, then an element $u$ in $A$ is a unitary in the C$^*$-algebra sense if, and only if, it is unitary in the JB$^*$-algebra sense if, and only if, it is unitary (tripotent) in the JB$^*$-triple sense. Clearly, every unitary element in a JB$^*$-algebra is an extreme point of its closed unit ball.\smallskip

After reviewing the basic facts on the extreme points of the closed unit ball of a JB$^*$-triple, we can next consider the strong Mankiewicz property for convex bodies in a JBW$^*$-triple. Let us recall that the Russo--Dye theorem is the tool employed by Mori and Ozawa to show, via Theorem \ref{t Mori--Ozawa strong Mankiewicz} \cite[Theorem 2]{MoriOza2018}, that every convex body of a unital C$^*$-algebra satisfies the strong Mankiewicz property. The Russo--Dye theorem was extended to the setting of unital JB$^*$-algebras by J.D.M. Wright and M.A. Youngson \cite{WriYou77} and A.A. Siddiqui \cite{Sidd2010}. 
In 2007, A.A. Siddiqui proved that every element in the unit ball of a JBW$^*$-triple is the average of two extreme points (see \cite[Theorem 5]{Sidd2007}). Our next result is a straight consequence of this result and \cite[Theorem 2]{MoriOza2018}.

\begin{corollary}\label{p complex JBW*-triples have the strong Mankiewicz property} The closed unit ball of every JBW$^*$-triple $M$ satisfies the strong Mankiewicz property. Consequently, every convex body in a JBW$^*$-triple satisfies the same property. Furthermore, if $L$ is a convex subset of a normed space $Y$, then every surjective isometry $\Delta : \mathcal{B}_{M}\to L$ can be uniquely extended to an affine isometry from $M$ onto a norm closed subspace of $Y$.
\end{corollary}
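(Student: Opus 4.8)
The plan is to obtain this as a direct application of the Mori--Ozawa Theorem \ref{t Mori--Ozawa strong Mankiewicz} (that is, \cite[Theorem 2]{MoriOza2018}), once we check that a JBW$^*$-triple $M$ satisfies the geometric hypothesis occurring there, namely that the closed convex hull of $\partial_e (\mathcal{B}_M)$ has non-empty interior in $M$. For $M=\{0\}$ all the assertions are trivial, so we may and do assume $M\neq \{0\}$.

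First I would recall that, by \eqref{eq extreme points and complete tripotents}, the extreme points of $\mathcal{B}_M$ are precisely the complete tripotents of $M$, and then invoke Siddiqui's theorem \cite[Theorem 5]{Sidd2007}: every $x\in \mathcal{B}_M$ can be written as $x=\frac12(u+v)$ for suitable complete tripotents $u,v$ of $M$. In particular $\partial_e(\mathcal{B}_M)\neq \emptyset$, and since midpoints of pairs of extreme points lie in the convex hull of the extreme points, this yields the chain $\mathcal{B}_M\subseteq \mathrm{co}(\partial_e(\mathcal{B}_M))\subseteq \mathcal{B}_M$ (the last inclusion because $\mathcal{B}_M$ is convex and contains all its extreme points). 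Hence $\mathrm{co}(\partial_e(\mathcal{B}_M))=\mathcal{B}_M$, and the closed convex hull of $\partial_e(\mathcal{B}_M)$ equals $\mathcal{B}_M$, which has non-empty interior in $M$ (it contains the open unit ball).

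With this verified, I would apply Theorem \ref{t Mori--Ozawa strong Mankiewicz} with $X=M$. Its first conclusion then gives that every convex body $K\subset M$ satisfies the strong Mankiewicz property; since $\mathcal{B}_M$ is itself a convex body (its interior, the open unit ball, is non-empty), in particular the closed unit ball of $M$ enjoys the strong Mankiewicz property. The ``furthermore'' statement is exactly the second part of Theorem \ref{t Mori--Ozawa strong Mankiewicz}: given a convex subset $L$ of a normed space $Y$ and a surjective isometry $\Delta:\mathcal{B}_M\to L$, the map $\Delta$ admits a unique extension to an affine isometry from $M$ onto a norm closed subspace of $Y$.

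I do not expect a genuine obstacle here; the proof is essentially a transcription of the borrowed Mori--Ozawa theorem. The only point requiring (minimal) care is the observation that Siddiqui's averaging result forces the closed convex hull of $\partial_e(\mathcal{B}_M)$ to be the whole of $\mathcal{B}_M$, which is what feeds the hypothesis of Theorem \ref{t Mori--Ozawa strong Mankiewicz}; all the rest is formal.
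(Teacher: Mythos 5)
Your proof is correct and follows exactly the paper's intended argument: the paper derives this corollary as a "straight consequence" of Siddiqui's averaging theorem \cite[Theorem 5]{Sidd2007} (which shows the closed convex hull of $\partial_e(\mathcal{B}_M)$ is all of $\mathcal{B}_M$, hence has non-empty interior) combined with Theorem \ref{t Mori--Ozawa strong Mankiewicz}. Nothing to add.
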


\section{Relatively open faces in the bidual of a JB$^*$-triple}\label{sec:3}

As in the study of the Mazur--Ulam property in the setting of unital C$^*$- and von Neumann algebras (see \cite{MoriOza2018}), the facial structure of JB$^*$-triples plays a central role in our study of the Mazur--Ulam property in the spaces belonging to the class of JBW$^*$-triples. For this purpose we shall require some basic notions.\smallskip

In order to understand the nomenclature we refresh the usual ``facear'' and ``pre-facear'' operations.
Let $X$ be a complex Banach space with dual space $X^*$. For each subset $F\subseteq \mathcal{B}_{X}$ and each $G\subseteq \mathcal{B}_{X^*}$, let
\begin{equation}\label{1.1}
F^{\prime} = \{a \in \mathcal{B}_{X^*} : a(x) = 1\,\, \forall x \in F\},\quad G_{\prime} = \{x \in \mathcal{B}_{X} :a(x) = 1\,\, \forall a \in G\}.
\end{equation}
Then, $F^{\prime}$ is a weak$^*$ closed face of $\mathcal{B}_{X^*}$ and
$G_{\prime}$ is a norm closed face of $\mathcal{B}_{X}$. The subset $F$ of $\mathcal{B}_{X}$ is said to be
a \emph{norm-semi-exposed face} of $\mathcal{B}_{X}$ if $F=(F^{\prime})_{\prime}$
and the subset $G$ of $\mathcal{B}_{X^*}$ is said to be a \emph{weak$^*$-semi-exposed face} of $\mathcal{B}_{X^*}$ if
$G=(G_{\prime})^{\prime}$. The mappings $F \mapsto F^{\prime}$
and $G \mapsto G_{\prime}$ are anti-order isomorphisms between
the complete lattices 
of norm-semi-exposed faces of $\mathcal{B}_{X}$ and 
of weak$^*$-semi-exposed faces of $\mathcal{B}_{X^*}$ and are inverses of each other.\smallskip

In a celebrated result published in \cite{EdRutt88}, C.M. Edwards and G.T. R\"{u}ttimann proved that the weak$^*$ closed faces of the closed unit ball of a JBW$^*$-triple $M$ are in one-to-one correspondence with the tripotents in $M$. The concrete theorem reads as follows:

\begin{theorem}\label{t ER weakstar closed faces}\cite{EdRutt88} Let $M$ be a JBW$^*$-triple, and let $F$ be a weak$^*$ closed face of
the unit ball $\mathcal{B}_M$ in $M$. Then, there exists a tripotent
$e$ in $M$ such that
$$F =F^{M}_e= e + \mathcal{B}_{_{M_0(e)}} = \left(\{e\}_{\prime}\right)^{\prime},$$
where $\mathcal{B}_{_{M_0(e)}}$ denotes the unit ball of the Peirce zero space $M_0(e)$ in $M$. Furthermore,
the mapping $e \mapsto F^M_e= \left(\{e\}_{\prime}\right)^{\prime}$ is an anti-order isomorphism from the partially ordered set
${\mathcal{U}} (M)$ of all tripotents in $M$ onto the partially ordered set 
of weak$^*$ closed faces of $\mathcal{B}_M$ excluding the empty set.$\hfill\Box$
\end{theorem}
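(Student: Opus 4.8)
The plan follows the two halves of the statement. I would first show that for every tripotent $e\in M$ the set $F^{M}_{e}:=e+\mathcal{B}_{M_{0}(e)}$ is a nonempty weak$^{*}$ closed face of $\mathcal{B}_{M}$, that it coincides with $\{x\in\mathcal{B}_{M}:\{e,e,x\}=e\}=\{x\in\mathcal{B}_{M}:P_{2}(e)(x)=e\}$ and with $(\{e\}_{\prime})'$, and that $u\leq e\Leftrightarrow F^{M}_{e}\subseteq F^{M}_{u}$; then I would show that every nonempty weak$^{*}$ closed face of $\mathcal{B}_{M}$ arises this way, with $F^{M}_{0}=\mathcal{B}_{M}$ and the empty face the only one omitted.

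Since the triple product of a JBW$^{*}$-triple is separately weak$^{*}$ continuous, $P_{0}(e)$ is weak$^{*}$ continuous, so $M_{0}(e)=P_{0}(e)(M)$ is weak$^{*}$ closed and $F^{M}_{e}$ is weak$^{*}$ closed; it is nonempty as $e\in F^{M}_{e}$. Clearly $F^{M}_{e}\subseteq\{x\in\mathcal{B}_{M}:\{e,e,x\}=e\}$, and since $\{e,e,x\}=P_{2}(e)(x)+\tfrac12 P_{1}(e)(x)$ the equation $\{e,e,x\}=e$ is equivalent to $P_{2}(e)(x)=e$ together with $P_{1}(e)(x)=0$; so it remains to check that $P_{2}(e)(x)=e$ with $x\in\mathcal{B}_{M}$ already forces $P_{1}(e)(x)=0$. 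By \eqref{eq positive element FR},
\[
P_{2}(e)\{x,x,e\}=\{e,e,e\}+\{P_{1}(e)(x),P_{1}(e)(x),e\}=e+b,\qquad b\geq 0\ \text{in }M_{2}(e),
\]
while $\|\{x,x,e\}\|\leq 1$ and contractivity of $P_{2}(e)$ give $\|e+b\|\leq 1$; as $\|e+b\|=1+\|b\|$ for a positive element $b$ of the unital JB$^{*}$-algebra $M_{2}(e)$, necessarily $b=0$, and applying the ``only if'' half of \eqref{eq positive element FR} to $z:=P_{1}(e)(x)\in M_{1}(e)$ (for which $P_{2}(e)(z)=0$ and $P_{2}(e)\{z,z,e\}=\{z,z,e\}=b$) yields $P_{1}(e)(x)=0$, so $x=e+P_{0}(e)(x)\in F^{M}_{e}$. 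With the identity $F^{M}_{e}=\{x\in\mathcal{B}_{M}:P_{2}(e)(x)=e\}$ in hand, $F^{M}_{e}$ is a face: if $\tfrac12(x+y)\in F^{M}_{e}$ with $x,y\in\mathcal{B}_{M}$, then $\tfrac12\big(P_{2}(e)(x)+P_{2}(e)(y)\big)=e$ with $P_{2}(e)(x),P_{2}(e)(y)\in\mathcal{B}_{M_{2}(e)}$, and since $e$ is the unit of $M_{2}(e)$, hence an extreme point of its closed unit ball, $P_{2}(e)(x)=P_{2}(e)(y)=e$. For the order, $u\leq e$ means $u$ is a projection of $M_{2}(e)$, whence a direct Jordan computation gives $\{u,u,e\}=u$, and since $M_{2}(e)\perp M_{0}(e)$ we get $\{u,u,x\}=u$ for every $x\in F^{M}_{e}$, i.e.\ $F^{M}_{e}\subseteq F^{M}_{u}$; conversely $F^{M}_{e}\subseteq F^{M}_{u}$ forces $e\in F^{M}_{u}$, so $e-u=P_{0}(u)(e)\in M_{0}(u)$ is a tripotent orthogonal to $u$, i.e.\ $u\leq e$. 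Hence $e\mapsto F^{M}_{e}$ is an order anti-embedding, in particular injective.

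To identify $F^{M}_{e}$ with $(\{e\}_{\prime})'$, recall $\{e\}_{\prime}=\{\varphi\in\mathcal{B}_{M_{*}}:\varphi(e)=1\}$. Every such $\varphi$ has norm one, hence a support tripotent $u(\varphi)$ with $\varphi=\varphi\circ P_{2}(u(\varphi))$ and $\varphi|_{M_{2}(u(\varphi))}$ a faithful normal state; faithfulness, together with $\varphi(P_{2}(u(\varphi))(e))=1$ and $\|P_{2}(u(\varphi))(e)\|\leq 1$, forces $P_{2}(u(\varphi))(e)=u(\varphi)$, i.e.\ $u(\varphi)\leq e$, whence $F^{M}_{e}\subseteq F^{M}_{u(\varphi)}\subseteq\{x\in\mathcal{B}_{M}:\varphi(x)=1\}$; intersecting over all such $\varphi$ gives $F^{M}_{e}\subseteq(\{e\}_{\prime})'$. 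Conversely let $x\in(\{e\}_{\prime})'$; if $P_{2}(e)(x)\neq e$ then, since normal states of the JBW$^{*}$-algebra $M_{2}(e)$ separate its self-adjoint part and $\|P_{2}(e)(x)\|\leq 1$, there is a normal state $\omega$ of $M_{2}(e)$ with $\Re\,\omega(P_{2}(e)(x))<1$; then $\varphi:=\omega\circ P_{2}(e)$ lies in $\mathcal{B}_{M_{*}}$ (by weak$^{*}$ continuity of $P_{2}(e)$), satisfies $\varphi(e)=\omega(e)=1$ and $\Re\,\varphi(x)<1$, contradicting $x\in(\{e\}_{\prime})'$. Hence $P_{2}(e)(x)=e$, so $x\in F^{M}_{e}$ and $F^{M}_{e}=(\{e\}_{\prime})'$.

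It remains to establish surjectivity, which is the substantive part. The case $F=\mathcal{B}_{M}=F^{M}_{0}$ being immediate, let $F$ be a proper nonempty weak$^{*}$ closed face, so $F\subseteq S(M)$. Using that $\mathcal{B}_{M}$ is the norm-closed convex hull of its extreme points — indeed, by \cite[Theorem 5]{Sidd2007}, every point of $\mathcal{B}_{M}$ is the midpoint of two complete tripotents — one checks that $F$ is weak$^{*}$-semi-exposed; being proper, $G:=F_{\prime}$ is then a nonempty weak$^{*}$ closed face of $\mathcal{B}_{M_{*}}$ with $F=(G)'$. Each $\varphi\in G$ has norm one and support tripotent $u(\varphi)$, and by the analysis above $x\in F$ iff $P_{2}(u(\varphi))(x)=u(\varphi)$ for every $\varphi\in G$, i.e.
\[
F=\bigcap_{\varphi\in G}F^{M}_{u(\varphi)}.
\]
It then remains to produce a single tripotent $e$ with this intersection equal to $F^{M}_{e}$, the candidate being $e:=\bigvee_{\varphi\in G}u(\varphi)$, the supremum of the support tripotents in the lattice $\mathcal{U}(M)$. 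This is the main obstacle: one must know that the tripotents of $M$ form a complete lattice and that suprema are compatible with intersections of the corresponding faces, i.e.\ that $P_{2}(u_{\alpha})(x)=u_{\alpha}$ for all $\alpha$ forces $P_{2}\big(\bigvee_{\alpha}u_{\alpha}\big)(x)=\bigvee_{\alpha}u_{\alpha}$. I would attack this by observing that $\{u(\varphi):\varphi\in G\}$ is upward directed — as $G$ is a face, the functional $\tfrac12(\varphi_{1}+\varphi_{2})$ lies in $G$ and its support tripotent dominates both $u(\varphi_{1})$ and $u(\varphi_{2})$ — so the whole family sits inside the weak$^{*}$-closed inner ideal of $M$ generated by these tripotents; there one forms the weak$^{*}$ limit $e$ of the increasing net of finite joins and uses weak$^{*}$ continuity of the Peirce-$2$ projections along such nets, together with normality of the functionals in $G$, to conclude that $e$ is a tripotent with $F=F^{M}_{e}$. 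Once this passage to the supremum and the semi-exposedness of proper weak$^{*}$ closed faces are in place — which is where essentially all the real work lies — the identities $F=F^{M}_{e}=e+\mathcal{B}_{M_{0}(e)}=(\{e\}_{\prime})'$ and the anti-order-isomorphism statement follow.
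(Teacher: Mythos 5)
This theorem is quoted in the paper from Edwards and R\"uttimann \cite{EdRutt88} with no proof given, so there is no in-paper argument to compare against; I can only assess your proposal on its own terms. The first half --- that $F^M_e = e + \mathcal{B}_{M_0(e)} = \{x \in \mathcal{B}_M : P_2(e)(x) = e\} = (\{e\}_{\prime})^{\prime}$ is a nonempty weak$^*$ closed face and that $e \mapsto F^M_e$ is an injective order anti-embedding --- is correct and essentially complete: the use of \eqref{eq positive element FR} together with $\|e+b\|=1+\|b\|$ for $b\geq 0$ in $M_2(e)$, and the use of the support tripotent of a norm-one normal functional, are the right tools and the details check out.

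The surjectivity half, however, contains two genuine gaps, both of which you flag but neither of which your sketch closes --- and they are precisely where the content of the Edwards--R\"uttimann theorem lies. First, the assertion that every proper weak$^*$ closed face $F$ of $\mathcal{B}_M$ is weak$^*$-semi-exposed does not follow from Siddiqui's theorem that every element of $\mathcal{B}_M$ is a midpoint of two extreme points: semi-exposedness requires, for each $y\in\mathcal{B}_M\setminus F$, a norm-one normal functional equal to $1$ on all of $F$ but not at $y$, and the extremal richness of the ball produces no such separating functional; in \cite{EdRutt88} this is extracted from the tripotent machinery itself (starting from the norm closed faces of $\mathcal{B}_{M_*}$), not as a soft preliminary. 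Second, the passage to $e=\bigvee_{\varphi}u(\varphi)$ needs both the existence of suprema of upward directed families of tripotents and the implication that $P_2(u_\alpha)(x)=u_\alpha$ for all $\alpha$ forces $P_2(e)(x)=e$. Your proposed route via ``weak$^*$ continuity of the Peirce-$2$ projections along such nets'' does not work as stated: the triple product is only \emph{separately} weak$^*$ continuous, so weak$^*$ convergence of the increasing net of finite joins gives no control on $P_2(u_\alpha)(x)$, whose dependence on $u_\alpha$ is quadratic. One needs strong$^*$ convergence of the net of tripotents together with the joint strong$^*$ continuity of the triple product on bounded sets, and proving that an increasing net of tripotents converges strong$^*$ to a tripotent is itself a nontrivial lemma. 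As it stands, the proposal establishes the injective anti-order embedding and correctly identifies, but does not supply, the two lemmas that make the map surjective.
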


We continue by reviewing the notion of compact tripotent in the second dual of a JB$^*$-triple. Given an element $a$ in a JB$^*$-triple, we set $a^{[1]} := a$, $a^{[3]} := \{a,a,a\}$, and $a^{[2n+1]} :=
\{a,a,a^{[2n-1]}\},$ $(n\in \NN)$. Let us fix a JBW$^*$-triple $M$. It is known that, for each $a\in S(M)$, the sequence $(a^{[2n -1]})$ converges in the weak$^*$ topology of $M$ to a (possibly zero) tripotent $u_{_M}(a)$ or $u(a)$ in $M$ (compare \cite[Lemma 3.3]{EdRutt88} or \cite[page 130]{EdFerHosPe2010}). This tripotent $u_{_{M}}(a)$ is called the \emph{support} \emph{tripotent} of $a$. 
The equality $a=u(a) + P_0 (u(a)) (a)$ holds for every $a$ in the above conditions. For a norm-one element $a$ in a JB$^*$-triple $E$, $u_{_{E^{**}}}(a)$ will denote the support tripotent of $a$ in $E^{**}$ which is always non-zero. Given $a$ in $M$ the support tripotents $u_{_M}(a)$ and $u_{_{M^{**}}}(a)$ need not coincide. To avoid confusion, given a norm-one element $a$ in a JBW$^*$-triple $M$, unless otherwise stated, we shall write $u(a)$ for the support tripotent of $a$ in $M^{**}$. \smallskip

Accordingly to the terminology introduced by C.M. Edwards and G.T. R\"{u}ttimann in \cite{EdRu96}, a tripotent $e$ in the second dual, $E^{**}$, of a JB$^*$-triple $E$ is said to be \emph{compact-$G_{\delta}$} if there exists a norm-one element $a$ in $E$ satisfying $u(a)=u_{_{E^{**}}}(a) =e$. A tripotent $e$ in $E^{**}$ is \emph{compact} if $e=0$ or it is the infimum of a decreasing net of compact-$G_{\delta}$ tripotents in $E^{**}$ converging to $e$ in the weak$^*$ topology. 
Clearly, every tripotent in $E$ is compact in $E^{**}$.\smallskip

C.A. Akemann and G.K. Pedersen described in \cite{AkPed92} the facial structure of a general C$^*$-algebra, a task actually initiated and considered by C.M. Edwards and G.T. R\"{u}ttimann in \cite{EdRutt88}. The understanding of the facial structure of a general JB$^*$-triple was completed by C.M. Edwards, F.J. Fern{\'a}ndez-Polo, C.S. Hoskin and A.M. Peralta in \cite{EdFerHosPe2010}. The result required in this note is subsumed in the next theorem.

\begin{theorem}\label{thm norm closed faces}\cite[Theorem 3.10 and Corollary 3.12]{EdFerHosPe2010} Let $E$ be a JB$^*$-triple, and let $F$ be a norm closed face of
the unit ball $\mathcal{B}_E$ in $E$. Then, there exists a {\rm(}unique{\rm)} compact tripotent
$u$ in $E^{**}$ such that
$$F =F^{E}_u= (u + \mathcal{B}_{_{E_0^{**}(u)}}) \cap E = \left({\{u\}}_{\prime}\right)_{\prime},$$
where $\mathcal{B}_{_{E_0^{**}(u)}}$ denotes the unit ball of the Peirce zero space $E^{**}_0(u)$
in $E^{**}$. Furthermore,
the mapping $u \mapsto F_u^{E}= \left({\{u\}}_{\prime}\right)_{\prime}$ is an anti-order isomorphism from the partially ordered set
of all compact tripotents in $E^{**}$ onto the partially ordered set 
of norm closed faces of $\mathcal{B}_E$ excluding the empty set.$\hfill\Box$
\end{theorem}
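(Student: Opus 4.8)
\emph{Sketch of proof.} The plan is to reduce the statement to the Edwards--R\"{u}ttimann description of the weak$^*$ closed faces of the closed unit ball of a JBW$^*$-triple (Theorem~\ref{t ER weakstar closed faces}) applied to the JBW$^*$-triple $E^{**}$, keeping track of the interplay between the norm and the weak$^*$ topology by means of the support tripotent map $a\mapsto u_{E^{**}}(a)$ and the notion of compact tripotent recalled above. The improper case $F=\mathcal{B}_E$ corresponds to the tripotent $0$ (which is compact by definition) and is trivial; so from now on let $F$ be a proper norm closed face of $\mathcal{B}_E$, which is then automatically contained in $S(E)$.

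\emph{From a compact tripotent to a norm closed face.} First I would fix a compact tripotent $u$ in $E^{**}$ and set $F^E_u:=(u+\mathcal{B}_{E_0^{**}(u)})\cap E$. By Theorem~\ref{t ER weakstar closed faces} the set $F^{E^{**}}_u=u+\mathcal{B}_{E_0^{**}(u)}$ is a weak$^*$ closed face of $\mathcal{B}_{E^{**}}$; and the intersection of a face of $\mathcal{B}_{E^{**}}$ with the smaller ball $\mathcal{B}_E$ is again a face of $\mathcal{B}_E$, because whenever $x=\frac12(y+z)$ lies in such an intersection with $y,z\in\mathcal{B}_E\subseteq\mathcal{B}_{E^{**}}$, the face property upstairs forces $y,z\in F^{E^{**}}_u\cap E$. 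Hence $F^E_u$ is a norm closed face of $\mathcal{B}_E$. The two genuinely analytic ingredients are that $F^E_u$ is \emph{non-empty} and that it is \emph{weak$^*$ dense} in $F^{E^{**}}_u$: when $u$ is compact-$G_{\delta}$, say $u=u_{E^{**}}(a)$ with $a\in S(E)$, the identity $a=u_{E^{**}}(a)+P_0(u_{E^{**}}(a))(a)$ shows $a\in F^E_u$, while the general compact case, as well as the density claim, is a noncommutative Urysohn-type statement obtained by writing $u$ as the weak$^*$ limit of a decreasing net of compact-$G_{\delta}$ tripotents and approximating, in the spirit of \cite{AkPed92,EdFerHosPe2010}. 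In particular $\overline{F^E_u}^{\,\sigma(E^{**},E^*)}=F^{E^{**}}_u$, so the injectivity part of Theorem~\ref{t ER weakstar closed faces} shows that $u$ is uniquely recovered from $F^E_u$.

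\emph{Every norm closed face arises this way.} Conversely, let $F$ be a proper non-empty norm closed face of $\mathcal{B}_E$. Using the Edwards--R\"{u}ttimann facial structure of both $\mathcal{B}_{E^{**}}$ and its predual ball $\mathcal{B}_{E^*}$ (see \cite{EdRutt88}), one shows that $F$ is norm-semi-exposed, that is, $F=(F^{\prime})_{\prime}$ with $F^{\prime}\subseteq\mathcal{B}_{E^*}$ a weak$^*$ closed face, and that there is a unique tripotent $u$ in $E^{**}$ with $F^{\prime}=\{u\}_{\prime}$; consequently $F=({\{u\}}_{\prime})_{\prime}=(u+\mathcal{B}_{E_0^{**}(u)})\cap E=F^E_u$. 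It remains to check that this $u$ is compact. For each $a\in F$ we have $a\in F^{E^{**}}_u\cap E$, so $a-u\in E_0^{**}(u)$; combining this with $a=u_{E^{**}}(a)+P_0(u_{E^{**}}(a))(a)$ and the Peirce arithmetic yields $u\leq u_{E^{**}}(a)$, and each $u_{E^{**}}(a)$ is a compact-$G_{\delta}$ tripotent. A short computation with the anti-order isomorphism of Theorem~\ref{t ER weakstar closed faces} then identifies $u$ with $\bigwedge\{u_{E^{**}}(a):a\in F\}$, and an infimum of compact-$G_{\delta}$ tripotents in $E^{**}$ is again compact (see \cite{EdRu96}). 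Thus $u$ is the desired compact tripotent.

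\emph{The anti-order isomorphism, and the main obstacle.} Injectivity of $u\mapsto F^E_u$ has already been noted, and both implications of the order statement follow from the equality $\overline{F^E_u}^{\,\sigma(E^{**},E^*)}=F^{E^{**}}_u$: if $u\leq v$ then $F^{E^{**}}_u\supseteq F^{E^{**}}_v$ and intersecting with $E$ gives $F^E_u\supseteq F^E_v$, while if $F^E_u\supseteq F^E_v$ then passing to weak$^*$ closures gives $F^{E^{**}}_u\supseteq F^{E^{**}}_v$, i.e. $u\leq v$; together with the surjectivity onto non-empty norm closed faces established in the previous step, this is exactly the asserted anti-order isomorphism (the improper face corresponding to $u=0$). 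I expect the main obstacle to be the Urysohn content of the second step---the non-emptiness and, above all, the weak$^*$ density of $F^E_u$ inside $u+\mathcal{B}_{E_0^{**}(u)}$ for an \emph{arbitrary} compact tripotent $u$---since this is exactly the point at which one must manufacture honest norm-one elements of $E$ out of the purely order-theoretic description of compactness; a secondary but still non-formal ingredient is the verification that every proper norm closed face of $\mathcal{B}_E$ is norm-semi-exposed.
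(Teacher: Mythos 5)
The paper offers no proof of this statement: it is imported verbatim from \cite[Theorem 3.10 and Corollary 3.12]{EdFerHosPe2010} and closed with a box, so there is no internal argument to compare yours against. As an outline your proposal is sound and follows the strategy of the cited source: reduce to the Edwards--R\"{u}ttimann description of the weak$^*$ closed faces of $\mathcal{B}_{E^{**}}$ and mediate between the norm and weak$^*$ pictures via support tripotents and compactness. You have also correctly located the two places where essentially all the work happens, namely that every proper norm closed face of $\mathcal{B}_E$ is norm-semi-exposed, and that $F^E_u$ is weak$^*$ dense in $u+\mathcal{B}_{E_0^{**}(u)}$ for an arbitrary compact $u$; these are precisely the substantial content of \cite{EdFerHosPe2010}. (Note that the density statement is, in the present paper, the implication $(b)\Rightarrow(a)$ of Theorem \ref{t characterization relatively open weak* closed faces}, proved there from Lemma \ref{l norm inequality JBstartriple} without appeal to the theorem under discussion, so invoking it creates no circularity.) One step you pass over too quickly: to deduce that $u=\bigwedge\{u_{_{E^{**}}}(a):a\in F\}$ is compact from the fact that infima of \emph{decreasing nets} of compact-$G_\delta$ tripotents are compact (\cite{EdRu96}), you must first check that the family $\{u_{_{E^{**}}}(a):a\in F\}$ is downward directed; this follows from the convexity of $F$ together with the identity $u_{_{E^{**}}}\bigl(\tfrac{a+b}{2}\bigr)=u_{_{E^{**}}}(a)\wedge u_{_{E^{**}}}(b)$ for $a,b\in F$, which itself requires an argument. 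With those caveats, your text should be read as a faithful roadmap into \cite{EdFerHosPe2010} rather than as a self-contained proof.
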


The facial structure of the closed unit ball of a JB$^*$-triple $E$ assures that norm closed faces of $\mathcal{B}_{E}$ are in one-to-one correspondence with compact tripotents in $E^{**}$. Even in the case in which we are dealing with a JBW$^*$-triple $M$, tripotents in $M$ are not enough to determine all norm closed faces of $\mathcal{B}_{M}$.\smallskip

The celebrated Kadison's transitivity theorem was extended by L.J. Bunce, J. Mart{\'i}nez-Moreno and the last two authors of this note to the setting of JB$^*$-triples (cf. \cite[Theorem 3.3]{BuFerMarPe}). Suppose $E$ is a JB$^*$-triple. A consequence of Kadison's transitivity theorem proves that every maximal norm closed proper face of $\mathcal{B}_E$ is of the form \begin{equation}\label{eq maximal proper faces} F_e^{E}= (e + \mathcal{B}_{_{E_0^{**}(e)}}) \cap E,
\end{equation} where $e$ is a minimal tripotent in $E^{**}$ (see \cite[Corollary 3.5]{BuFerMarPe}).\smallskip

When comparing Theorems \ref{t ER weakstar closed faces} and \ref{thm norm closed faces} the natural question is whether we can topologically distinguish between weak$^*$ closed faces in $\mathcal{B}_{E^{**}}$ associated with compact tripotents in $E^{**}$ from weak$^*$ closed faces in $\mathcal{B}_{E^{**}}$ associated with non-compact tripotents in $E^{**}$. We shall see in Theorem \ref{t characterization relatively open weak* closed faces} that the required topological notion was already considered in \cite{FerPe06}.\smallskip

Let $X$ be a Banach space, $E$ a weak$^*$ dense subset of $X^*$ and $S$ a non-zero subset of $X^{*}$. Following the notation in \cite[\S 2]{FerPe06}, we shall say that $S$ is \emph{open relative to $E$} if $S\cap E$ is $\sigma(X^{*},X)$ dense in $\overline{S}^{\sigma(X^{*},X)}.$ Let $E$ be a JB$^*$-triple. A tripotent $e$ in $E^{**}$ is called \emph{closed (relative to $E$)} if $E_{0}^{**} (e)$ is an open subset of $E^{**}$ relative to $E$. We shall say that $e$ is \emph{bounded {\rm(}relative to $E${\rm)}} if there exists $x$ in the unit sphere of $E$ satisfying that $\{e,e,x\} = e$ (or equivalently, $x= e + P_0(e) (x)$ in $E^{**}$). One of the main achievements in \cite{FerPe06} shows that a tripotent $u$ in $E^{**}$ is compact if and only if it is closed and bounded (cf. \cite[Theorem 2.6.]{FerPe06}).\smallskip

Another tools needed for our purposes are the triple functional calculus at an element in a JB$^*$-triple $E$
and the strong$^*$ topology. The symbol $E_a$ will stand for the
JB$^*$-subtriple of $E$ generated by the element $a$\label{eq subtriple single generated}. It is known that
$E_a$ is JB$^*$-triple isomorphic (and hence isometric) to $C_0
(\Omega_{a})$ for some locally compact Hausdorff space
$\Omega_{a}$ contained in $(0,\|a\|],$ such that $\Omega_{a}\cup
\{0\}$ is compact, where $C_0 (\Omega_{a})$ denotes the Banach
space of all complex-valued continuous functions vanishing at $0.$
It is also known that the triple identification of $E_a$ and $C_0
(\Omega_{a})$ can be assumed to satisfy that $a$ correspond to the function
mapping each $\lambda\in \Omega_{a}$ to itself (cf.
\cite[Corollary 1.15]{Ka83} and \cite{FriRu85}). The \emph{triple functional calculus} at the element $a$ is defined as follows.
Given a function $f\in C_{0}(\Omega_{a})$, $f_t (a)$ will stand for the (unique) element
in $E_a$ corresponding to the function $f$. 
\smallskip

Let $a$ be an element in a JB$^*$-triple $E$. Let $g_t(a)=:a^{[\frac12]}\in E_a$ where $g (\lambda) = \lambda^{\frac12}$ ($\lambda\in \Omega_{a}$).  Accordingly to the notation in \cite{FerPe06}, along this paper, $P_0 (a)$ will denote the bounded linear operator on $E$ defined by
\begin{equation}\label{eq Bergamn operator associated with an element} P_0 (a) (y) = y -2 L(a^{[\frac12]},a^{[\frac12]}) (y) +Q(a^{[\frac12]})^2 (y).
\end{equation}  In the literature this operator is called the
Bergman operator associated with $a$. We should note that this
notation is not ambiguous when $a=e$ is a tripotent, because $P_0 (e)$ is precisely the Peirce projection of $E$ onto $E_0
(e).$ In the sequel we shall also write $a^{[2]}$ for the element $h_t(a)\in E_a$ where $h (\lambda) = \lambda^{2}$ ($\lambda\in \Omega_{a}$). The elements $a^{[2]}$ and $a^{[\frac12]}$ may be seen as artificial constructions in the triple setting, however, both of them lie in $E_a$.\smallskip

Let $a$ be a norm-one element in a JBW$^*$-triple $M$. Lemma 3.3 in \cite{EdRu96} implies the existence of a smallest tripotent $r(a)$ in $M$ such that $a\in M_2(r(a))$ and $a$ is positive in the latter JBW$^*$-algebra. Furthermore, in the order of the JBW$^*$-algebra $M_2^{**}(r(a))$, we have $$0 \leq u_{_M} (a)\leq u_{_{M^{**}}} (a) \leq a^{[2n+1]} \leq  a \leq r(a),$$ for every natural $n$. The tripotent $r(a)=r_{_M}(a)$ is called the \emph{range
tripotent} of $a$. We have already commented that the support tripotent $u_{_M}(a)$ might be zero, however, $u_{_{M^{**}}} (a)\neq 0$.\smallskip

It is time to recall the definition and basic properties of the strong$^*$ topology.
Suppose $\varphi$ is a norm-one normal functional in the predual $M_*$ of a JBW$^*$-triple $M.$
If $z$ is a norm-one element in $M$ satisfying $\varphi (z) =1$, then the assignment
$$(x,y)\mapsto \varphi\J xyz$$ defines a positive sesquilinear
form on $M,$ which does not depend on the choice of $z$. We therefore have a prehilbert seminorm on $M$ defined by $\|x\|^2_{\varphi}:= \varphi\J xxz.$ The \emph{strong$^*$ topology} of $M$ 
is the topology on $M$ induced by the seminorms $\|x\|_{\varphi}$ when $\varphi$ ranges in the unit sphere of $M_*$. The strong$^*$ topology was originally introduced in \cite{BarFri}, and subsequently developed in \cite{PeRo} (see also \cite[\S 5.10.2]{CaRo2018v2}). Among the properties of this topology we note that the strong$^*$ topology of $M$ is compatible with the duality $(M,M_{*})$ (see \cite[Theorem 3.2]{BarFri}). By combining this property with the bipolar theorem, we deduce that the identity \begin{equation}\label{eq bipolar
compatible} \overline{C}^{\sigma(M,M_*)} =
\overline{C}^{\tiny\hbox{strong$^*$}},
\end{equation} holds for every convex subset $C\subseteq M.$ Another interesting property asserts that the triple product of $M$ is jointly strong$^*$ continuous on bounded sets of $M$ (see \cite{PeRo} or \cite[Theorem 5.10.133]{CaRo2018v2}).\smallskip

We shall study next a series of geometric inequalities in different settings. The first case is probably part of the folklore in the theory of C$^*$-algebras.

\begin{lemma}\label{l norm inequality Cstr} Let $A$ be a unital C$^*$-algebra. Suppose $a$ and $b$ are two elements in the closed unit ball of $A$ with $a$ positive. Then $\| 1 - a(1 + b)a\|\leq  1$.
\end{lemma}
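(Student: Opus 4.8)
The plan is to use the positivity of $a$ to put $1-a(1+b)a$ into a symmetric shape and then to bound it by factorising that element, inside $M_2(A)$, as a product of a ``row'' and a ``column''. Since $a\ge 0$ and $\|a\|\le 1$, one has $0\le a^2\le 1$, so $1-a^2\ge 0$ and we may set $c:=(1-a^2)^{1/2}\in A$; then $c=c^*\ge 0$ and $a^2+c^2=1$. Expanding the triple product,
\[
1-a(1+b)a=1-a^2-aba=c^2-aba ,
\]
so it suffices to show $\|c^2-aba\|\le 1$.

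Next I would work in the C$^*$-algebra $M_2(A)$ of $2\times 2$ matrices over $A$ and consider
\[
U=\begin{pmatrix} c & a\\ 0 & 0\end{pmatrix},\qquad V=\begin{pmatrix} c & 0\\ -ba & 0\end{pmatrix},\qquad UV=\begin{pmatrix} c^2-aba & 0\\ 0 & 0\end{pmatrix},
\]
so that $\|c^2-aba\|=\|UV\|\le\|U\|\,\|V\|$. The C$^*$-identity gives $\|U\|^2=\|UU^*\|=\|c^2+a^2\|=1$ (as $c^2+a^2=1$) and $\|V\|^2=\|V^*V\|=\|c^2+ab^*ba\|$. This is where the hypothesis is used: $\|b\|\le 1$ forces $b^*b\le 1$, hence $ab^*ba\le a^2$, hence $0\le c^2+ab^*ba\le c^2+a^2=1$ and so $\|V\|\le 1$. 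Putting the pieces together, $\|1-a(1+b)a\|=\|c^2-aba\|\le\|U\|\,\|V\|\le 1$.

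I do not expect a real obstacle: the content of the argument is the identity $1-a(1+b)a=c^2-aba$ together with the ``Pythagorean'' decomposition $1=a^2+c^2$, after which everything is an immediate consequence of the C$^*$-identity and submultiplicativity in $M_2(A)$; the only step that genuinely uses the standing assumption is the inequality $ab^*ba\le a^2$, which needs $\|b\|\le 1$. Should one wish to avoid matrix amplifications, the same computation goes through in a faithful representation $A\subseteq B(H)$: for a unit vector $\xi\in H$ one writes $(c^2-aba)\xi=c(c\xi)+(-ab)(a\xi)$ and applies the Cauchy--Schwarz inequality, using $\|c\xi\|^2+\|a\xi\|^2=\langle(c^2+a^2)\xi,\xi\rangle=1$ and $\|cc^*+(ab)(ab)^*\|=\|c^2+abb^*a\|\le 1$.
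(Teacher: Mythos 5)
Your argument is correct, but it takes a genuinely different route from the one in the paper. The paper's proof sets $z=1-a(1+b)a$ and estimates $zz^*$ directly: expanding, using that $x\mapsto yxy^*$ is positive and that $a^2\leq 1$, one gets $zz^*\leq 1-a^2+abb^*a\leq 1$, and the C$^*$-identity finishes. You instead exploit the Pythagorean identity $a^2+c^2=1$ with $c=(1-a^2)^{1/2}$ to rewrite the element as $c^2-aba$ and factor it in $M_2(A)$ as a product $UV$ of a row and a column, each of norm at most one; the only place the hypotheses enter is $\left\|U\right\|^2=\|c^2+a^2\|=1$ and $\left\|V\right\|^2=\|c^2+ab^*ba\|\leq\|c^2+a^2\|=1$, the latter using $b^*b\leq 1$ and conjugation by $a=a^*$. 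Both proofs are short and each rests on the C$^*$-identity plus the order-preservation of $x\mapsto a x a$; the paper's version avoids square roots and matrix amplification and is a one-line computation, while yours isolates the structural reason the bound holds (a row contraction applied to a unit column) and, as you note, passes verbatim to the Cauchy--Schwarz formulation in a faithful representation. All the individual steps you state check out: $1-a(1+b)a=c^2-aba$, the products $UU^*$ and $V^*V$ are as claimed, and $0\leq c^2+ab^*ba\leq 1$ gives $\|V\|\leq 1$.
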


\begin{proof} Let $z = 1 - a(1 + b)a$. Since for each $y\in A$, the mapping $x\mapsto yxy^*$ is positive, we get
$$zz^* = (1 - a(1 + b)a) (1 - a(1 + b)a)^* = 1 - 2 a^2 - a (b+b^*) a + a(1 + b)a a(1 + b)^*a $$
$$\leq 1 - 2 a^2 - a (b+b^*) a + a(1 + b) (1 + b)^*a = 1 - a^2 + a b b^* a 
\leq 1- a^2 + a^2 =1.$$ It follows from the Gelfand--Naimark axiom that $\| 1 - a(1 + b)a\|^2=\|z\|^2 =\|z z^*\|\leq 1$.
\end{proof}

The case of JB$^*$-algebras is treated next. We first recall some notation. Given an element $a$ in a JB$^*$-algebra $B$, we shall write $U_a$ for the linear mapping on $B$ defined by $U_a(x) = 2 (a\circ x) \circ a - a^2 \circ x$ ($x\in B$). 
It is clear that if $B$ is regarded as a JB$^*$-triple with the product given in \eqref{eq triple product JBstaralg} then $U_a (x) = \{a,x^*,a\}$ for every $a,x\in B$.

\begin{lemma}\label{l norm inequality JBstr} Let $B$ be a unital JB$^*$-algebra. Suppose $a$ and $b$ are two elements in the closed unit ball of $B$ with $a$ positive. Then $$\| 1 - U_a (1 + b)\|= \| 1- \{a,1+b^*,a\}\| \leq  1.$$
\end{lemma}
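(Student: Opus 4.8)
The plan is to reduce the JB$^*$-algebra inequality to the C$^*$-algebra case established in Lemma \ref{l norm inequality Cstr}, by exploiting the fact that the unital JB$^*$-subalgebra of $B$ generated by the positive element $a$, the element $b$, and the unit $1$ is special whenever $b$ is self-adjoint, and more generally by decomposing $b$ into real and imaginary parts. First I would recall that $U_a(1+b) = \{a, (1+b)^*, a\} = \{a, 1+b^*, a\}$ under the triple product \eqref{eq triple product JBstaralg}, so the two expressions in the statement genuinely agree; this is the second displayed identity and requires only the definition of $U_a$ together with $U_a(1) = a^2$ and the linearity of $U_a$.

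Next I would handle the self-adjoint case: assume first that $b = b^*$. Then $a$, $b$ and $1$ all lie in the JB$^*$-subalgebra $B_0$ of $B$ they generate, and since this subalgebra is generated by two self-adjoint elements (namely $a$ and $b$) together with the unit, the Shirshov--Cohn theorem (in its JB$^*$-algebra form, e.g.\ as in \cite{WriYou77} or \cite{Sidd2010}) guarantees that $B_0$ is a special JB$^*$-algebra, i.e.\ isometrically a JB$^*$-subalgebra of some unital C$^*$-algebra $A$ with $x\circ y = \frac12(xy+yx)$. In that C$^*$-algebra, $U_a(1+b) = \{a, 1+b, a\} = a(1+b)a$, so the desired estimate $\|1 - U_a(1+b)\| \le 1$ is exactly the content of Lemma \ref{l norm inequality Cstr} applied in $A$ (with the norm of $B_0$ agreeing with that of $A$ on $B_0$). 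This disposes of the case $b$ self-adjoint.

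For general $b$ in $\mathcal{B}_B$, write $b = b_1 + i b_2$ with $b_1 = \frac12(b+b^*)$ and $b_2 = \frac{1}{2i}(b-b^*)$ self-adjoint. A direct computation gives $b^* = b_1 - i b_2$, whence $1 + b^* = (1 + b_1) - i b_2$ and, by linearity of $U_a$ (equivalently of $x \mapsto \{a, x, a\}$ in the second slot, which is linear),
\[
1 - \{a, 1+b^*, a\} = \big(1 - \{a, 1+b_1, a\}\big) + i\,\{a, b_2, a\}.
\]
Here I must be careful: the splitting is not quite into a self-adjoint plus skew part of the C$^*$-type argument, so rather than estimating the two summands separately I would instead run the C$^*$-algebra computation of Lemma \ref{l norm inequality Cstr} directly but now inside the special JB$^*$-algebra generated by $\{a, b_1, b_2, 1\}$ --- which is again special by Shirshov--Cohn, since it is generated by the \emph{three} self-adjoint elements $a, b_1, b_2$ and the unit, and Shirshov--Cohn only requires two generators, but the subalgebra generated by $a$ and $b$ (two elements, not two self-adjoint elements) is already covered: the JB$^*$-algebra generated by any two elements and the unit is special. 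So in fact the cleanest route is: the unital JB$^*$-subalgebra $B_0 \subseteq B$ generated by $a$ and $b$ is special, embed $B_0 \hookrightarrow A$ isometrically into a unital C$^*$-algebra, note $U_a(1+b)$ computed in $B_0$ equals $a(1+b^*)^{\dagger}a$ --- more precisely, under the embedding the Jordan triple product $\{a, 1+b^*, a\}$ becomes $\frac12\big(a(1+b^*)^{\ddagger}a + a(1+b^*)^{\ddagger}a\big)$; care is needed because the C$^*$-involution restricted to $B_0$ agrees with the JB$^*$-algebra involution, so $(1+b^*)$ as an element of $B_0$ maps to $1 + b^*$ in $A$ where $b^*$ is the $A$-adjoint of $b$, and hence $\{a,1+b^*,a\}_{B_0} = a(1+b^*)^*a_{A}$... wait, the second slot of the triple product already carries a conjugation in \eqref{eq triple product JBstaralg}, so $\{a, (1+b)^*, a\}$ uses $((1+b)^*)^* = 1+b$; thus in $A$ it becomes $a(1+b)a$, and Lemma \ref{l norm inequality Cstr} applies verbatim with this $b \in \mathcal{B}_A$. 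The main obstacle is therefore purely bookkeeping: keeping straight that the extra conjugation in the triple product \eqref{eq triple product JBstaralg} cancels the $*$ on $1+b$, so that one really is estimating $\|1 - a(1+b)a\|$ in a C$^*$-algebra, and checking that the isometric embedding provided by speciality preserves both the unit and the norm so that Lemma \ref{l norm inequality Cstr} transfers back to $B$.
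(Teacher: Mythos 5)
Your reduction of the self-adjoint case is fine: when $b=b^*$, the unital JB$^*$-subalgebra generated by the two self-adjoint elements $a$ and $b$ is special by the Shirshov--Cohn theorem (in Wright's JB$^*$-form), and Lemma \ref{l norm inequality Cstr} transfers verbatim. The gap is in the passage to general $b$. You assert that ``the JB$^*$-algebra generated by any two elements and the unit is special.'' This is false as stated: a general element $b$ contributes \emph{two} self-adjoint generators $b_1=\frac12(b+b^*)$ and $b_2=\frac{1}{2i}(b-b^*)$, so the subalgebra generated by $1$, $a$ and $b$ is generated by the \emph{three} self-adjoint elements $a,b_1,b_2$, and Shirshov--Cohn only covers two. (The free special Jordan algebra on three generators differs from the free Jordan algebra on three generators --- this is the content of Glennie's identities --- so there is no three-generator speciality theorem to appeal to.) Your alternative, splitting $1-U_a(1+b)=\bigl(1-U_a(1+b_1)\bigr)-iU_a(b_2)$ and estimating the summands, only yields the useless bound $2$, as you yourself noticed. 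So the proposal does not close the general case.

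The paper's route around this obstacle is worth internalizing. First pass to the bidual so that $B$ may be assumed to be a JBW$^*$-algebra, and observe that $x\mapsto 1-U_a(1+x)$ is affine; hence by the Russo--Dye theorem for unital JB$^*$-algebras it suffices to prove the inequality when $b=u$ is a \emph{unitary}. For a unitary $u$, the JBW$^*$-subalgebra generated by $1,u,u^*$ is a commutative von Neumann algebra, so $u=e^{ih}$ for a single hermitian element $h$. The JB$^*$-subalgebra generated by $1$, $a$ and $h$ then has only \emph{two} self-adjoint generators, is special by Shirshov--Cohn, and contains $u$; inside the ambient C$^*$-algebra the desired inequality is exactly Lemma \ref{l norm inequality Cstr}. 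If you want to repair your argument, you need this unitary reduction (or some equally effective device to get back down to two self-adjoint generators); the decomposition $b=b_1+ib_2$ by itself does not suffice.
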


\begin{proof} There is no loss of generality in assuming that $B$ is a JBW$^*$-algebra. Let us fix a unitary element $u$ in $B$. By \cite[page 294]{Wri77}, the JBW$^*$-subalgebra $\mathcal{C}$ of $B$ generated by $1,u$ and $u^*$ can be realised as a JW$^*$-subalgebra of a von Neumann algebra $A.$ In particular $\mathcal{C}$ is a commutative von Neumann algebra. Since $u$ is a unitary element in $\mathcal{C}$, we can find a hermitian element $h\in \mathcal{C}\subset B$ such that $u = e^{ih}$ (cf. \cite[Remark 10.2.2]{KadRingII1986}). Let $\widetilde{\mathcal{C}}$ denote the JB$^*$-subalgebra of $B$ generated by $1$, $a$ and $h$. A new application of \cite{Wri77} implies that $\widetilde{\mathcal{C}}$ is isometrically JB$^*$-isomorphic to a JB$^*$-subalgebra of a unital C$^*$-algebra $\widetilde{A}$. Since $a$ and $u$ are identified with elements in the unit ball of $\widetilde{A}$ with $a$ positive, Lemma \ref{l norm inequality Cstr} implies that  $$1\geq \| 1 - a(1 + u)a\|_{\widetilde{A}}  = \| 1 - U_a (1 + u)\|_{\widetilde{\mathcal{C}}}=\| 1 - U_a (1 + u)\|_{B}= \| 1- \{a,1+u^*,a\}\|_{B} .$$ Finally, by the Russo--Dye theorem for unital JB$^*$-algebras (see \cite{Sidd2010}) gives the desired conclusion.
\end{proof}

We shall next establish a JB$^*$-triple version of the previous two lemmata.

\begin{lemma}\label{l norm inequality JBstartriple} Suppose $a$ and $b$ are two elements in the closed unit ball of a JB$^*$-triple $E$. Then $$ \left\| 2 a - a^{[2]} + P_0 (a) (b)\right\| = \left\| 2 a - a^{[2]} + b -2 L(a^{[\frac12]},a^{[\frac12]}) (b) +Q(a^{[\frac12]})^2 (b) \right\| \leq  1.$$
\end{lemma}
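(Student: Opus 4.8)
The plan is to rewrite the quantity as a ``balanced Bergman conjugation'' evaluated at a tripotent, and then to deduce the estimate from a Cauchy--Schwarz inequality in a Hilbert space model. Throughout write $B(x,x) := \mathrm{Id} - 2L(x,x) + Q(x)^2$ for the (one‑variable) Bergman operator, so that $P_0(a) = B(a^{[\frac12]},a^{[\frac12]})$ in the notation of the excerpt.

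\emph{Step 1: reductions.} The norm of an element, the functional‑calculus elements $a^{[\frac12]}$, $a^{[2]}$, and $P_0(a)$ are all determined inside the JB$^*$‑subtriple generated by $a$, which sits isometrically in $E^{**}$; hence I may assume $E$ is a JBW$^*$‑triple $M$. The case $a = 0$ is trivial. If $a \neq 0$, set $w := a^{[\frac12]}$ (so $\|w\| \le 1$ and $a = w^{[2]}$) and let $e := r_{_M}(w) = r_{_M}(a)$ be the range tripotent. Inside the JBW$^*$‑algebra $M_2(e)$, where $w$ and $a = w^{[2]}$ are positive with unit $e$, a direct computation with $L$ and $Q$ gives $B(w,w)(e) = (e-a)^{[2]} = e - 2a + a^{[2]}$, whence
$$2a - a^{[2]} + P_0(a)(b) = e - B(w,w)(e) + B(w,w)(b) = e - B(w,w)(e-b),$$
so the statement is equivalent to $\left\| e - B(w,w)(e-b)\right\| \le 1$.

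\emph{Step 2: the Hilbert space model.} Assume first that $M$ is a JC$^*$‑triple, realised as a weak$^*$‑closed subtriple of some $B(H,K)$, and embed $B(H,K)$ into the unital C$^*$‑algebra $B(K \oplus H)$ by the triple monomorphism $x \mapsto \hat x := \left(\begin{smallmatrix} 0 & x \\ 0 & 0 \end{smallmatrix}\right)$. Then $B(w,w)(x) = (1 - ww^*)\,x\,(1 - w^*w)$ corresponds to $\hat R\, \hat x\, \hat R$, where $\hat R := \mathrm{diag}(1_K - ww^*,\, 1_H - w^*w)$ is a positive contraction; and since $e$ is the polar part of $w$ one has $ww^*e = ew^*w$, i.e. $\hat R$ commutes with $\hat e$. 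Putting $\hat G := (1 - \hat R^2)^{1/2} \ge 0$ (so $\hat G^2 + \hat R^2 = 1$, and $\hat G$ too commutes with $\hat e$), the identity of Step 1 reads
$$\widehat{e - B(w,w)(e-b)} = \hat e - \hat R(\hat e - \hat b)\hat R = (1 - \hat R^2)\hat e + \hat R\,\hat b\,\hat R = \hat G\,\hat e\,\hat G + \hat R\,\hat b\,\hat R.$$
For unit vectors $\xi, \eta \in K \oplus H$, using that $\hat e, \hat b$ are contractions and $\hat G, \hat R$ are self‑adjoint,
$$\left| \langle (\hat G \hat e \hat G + \hat R \hat b \hat R)\xi, \eta\rangle \right| \le \|\hat G\xi\|\,\|\hat G\eta\| + \|\hat R\xi\|\,\|\hat R\eta\| \le \left(\|\hat G\xi\|^2 + \|\hat R\xi\|^2\right)^{1/2}\left(\|\hat G\eta\|^2 + \|\hat R\eta\|^2\right)^{1/2} = 1,$$
because $\hat G^2 + \hat R^2 = 1$. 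Hence $\left\| e - B(w,w)(e-b)\right\| \le 1$.

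\emph{Step 3: the general case, and the main obstacle.} For an arbitrary JBW$^*$‑triple one runs the same scheme intrinsically: produce $g$ with $B(g,g)$ ``complementary'' to $B(w,w)$ (the analogue of $\hat G$), so that $e - B(w,w)(e-b) = B(g,g)(e) + B(w,w)(b)$, and bound $\left\| e - B(w,w)(e-b)\right\|^2 = \sup_{\varphi \in S(M_*)} \varphi\{x,x,z_\varphi\}$ by a two‑component Cauchy--Schwarz in the pre‑Hilbert seminorms $\|\cdot\|_\varphi$, once one has the Pythagorean bound $\|B(g,g)(y)\|_\varphi^2 + \|B(w,w)(y)\|_\varphi^2 \le \|y\|_\varphi^2$ and the $\langle\cdot,\cdot\rangle_\varphi$‑symmetry of these operators; alternatively one passes to the atomic part of $M^{**}$, reducing to a single Cartan factor, the non‑exceptional ones being covered by Step 2 and the two exceptional ones being finite‑dimensional (with $M_{1,2}(\mathbb{O})$ a Peirce‑one subtriple of the JB$^*$‑algebra $H_3(\mathbb{O})$). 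The hard part is exactly this Cauchy--Schwarz/Pythagorean structure: decomposing $x := 2a - a^{[2]} + P_0(a)(b)$ along the Peirce spaces of $e$ produces three summands $x_2 \in M_2(e)$, $x_1 \in M_1(e)$, $x_0 \in M_0(e)$, and although each has norm $\le 1$ (by Lemma \ref{l norm inequality JBstr} and contractivity of the Peirce projections), $\|x_2 + x_1 + x_0\|$ genuinely exceeds $\max_i \|x_i\|$ in general --- already for $2 \times 2$ matrices --- so any proof must exploit how $B(w,w)$ links the three pieces coming from a single $b$.
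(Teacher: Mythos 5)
Your Steps 1 and 2 are correct: the rewriting $2a-a^{[2]}+P_0(a)(b)=e-B(w,w)(e-b)$ with $w=a^{[\frac12]}$ and $e$ the range tripotent is valid, and the operator Cauchy--Schwarz argument with $\hat G^2+\hat R^2=1$ gives a clean proof whenever the triple sits inside some $B(H,K)$ (it would in fact also reprove Lemma \ref{l norm inequality Cstr} directly). But as a proof of the lemma the proposal has a genuine gap, and you say so yourself: Step 3 is a plan, not an argument. The reduction through the atomic part of $M^{**}$ does legitimately reduce the inequality to Cartan factors, and types $1$--$4$ are covered by Step 2; the exceptional factors of types $5$ and $6$, however, are not JC$^*$-triples, so the Hilbert-space model is unavailable there, finite-dimensionality is irrelevant to a norm estimate, and viewing the type $5$ factor as a Peirce-one subtriple of $H_3(\mathbb{O})$ merely moves the problem into an exceptional JB$^*$-algebra for which you again offer no argument. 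The alternative ``intrinsic'' route is equally unestablished: the existence of a complementary element $g$ in the triple with $B(g,g)$ playing the role of $\hat G^2$, the $\varphi$-symmetry of the Bergman operators, and the Pythagorean inequality $\|B(g,g)(y)\|_\varphi^2+\|B(w,w)(y)\|_\varphi^2\leq\|y\|_\varphi^2$ are precisely the substance of the lemma, not routine verifications, and none of them is proved (in the $B(H,K)$ model $\hat G$ need not come from any element of the triple, so the intrinsic analogue is not even obviously well posed).

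The missing idea is exactly how the paper bridges from the special to the general case: by the Gelfand--Naimark theorem for JB$^*$-triples \cite[Corollary 1]{FriRu86} one may regard $E$ as a subtriple of a unital JB$^*$-algebra $B$, an isometric triple embedding $\pi:B\to B$ from \cite[Lemma 2.3]{BuFerMarPe} makes $\pi(a)$ positive, and the expression becomes $1+\{1-\pi(a),\pi(b)^*-1,1-\pi(a)\}$, which is controlled by the unital JB$^*$-algebra inequality of Lemma \ref{l norm inequality JBstr}. That lemma is where the exceptional part is tamed: by the Russo--Dye theorem \cite{Sidd2010} one may take $b$ unitary, write $b=e^{ih}$, and then Wright's theorem \cite{Wri77} makes the JB$^*$-subalgebra generated by $1,a,h$ special, reducing everything to the C$^*$-computation of Lemma \ref{l norm inequality Cstr} (or to your Step 2). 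Without some version of this Russo--Dye/speciality reduction, your proposal proves the lemma only for JC$^*$-triples, and the exceptional case remains open.
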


\begin{proof} By \cite[Corollary 1]{FriRu86} we may suppose that $E$ is a  JB$^*$-subtriple of a unital JB$^*$-algebra $B$ which is an $\ell_{\infty}$-sum of a type $I$ von Neumann factor and an $\ell_{\infty}$-sum of finite dimensional simple JB$^*$-algebras. Lemma 2.3 in \cite{BuFerMarPe} implies the existence of an isometric triple embedding, $\pi : B \to B,$ such that $\pi (a)$ is a positive element in $B$. Since the elements $\pi(a)$, $1-\pi(a),$ and $-\pi(b)$ lie in the closed unit ball of $B$, we deduce from Lemma \ref{l norm inequality JBstr} that \begin{equation}\label{eq 0907 one} \| 1 - U_{1-\pi(a)} (1 - \pi(b))\|_{B}= \| 1- \{1-\pi(a),1-\pi(b)^*,1-\pi(a)\}\|_{B} \leq  1.
\end{equation}

On the other hand, it is not hard to check that, since $\pi(a)$ is positive in $B$, we have
$$2 \pi(a) - \pi(a)^{[2]} +\pi( b) -2 L(\pi(a)^{[\frac12]},\pi(a)^{[\frac12]}) (\pi(b)) +Q(\pi(a)^{[\frac12]})^2 (\pi(b))$$
$$= 2\pi(a) - \pi(a)^{[2]} + \{1-\pi(a), \pi (b)^*, 1-\pi(a)\} = 1+\{1-\pi(a), \pi (b)^*-1, 1-\pi(a)\}.$$ Finally, since $\pi$ is an isometric triple embedding we deduce that $$\left\| 2 a - a^{[2]} + b -2 L(a^{[\frac12]},a^{[\frac12]}) (b) +Q(a^{[\frac12]})^2 (b) \right\|_{E} \ \ \ \ \ \ \  \ \ \ \ \ \ \ \ \ \ \ \ \ \ \ \ \ \ \ \ \ \ \ \
$$
\begin{align*}
 &= \left\| 2 \pi(a) - \pi(a)^{[2]} +\pi( b) -2 L(\pi(a)^{[\frac12]},\pi(a)^{[\frac12]}) (\pi(b)) +Q(\pi(a)^{[\frac12]})^2 (\pi(b)) \right\|_{B} \\
 &= \left\| 1+\{1-\pi(a), \pi (b)^*-1, 1-\pi(a)\} \right\|_{B} \\
 & = \left\| 1- \{1-\pi(a), 1-\pi (b)^*, 1-\pi(a)\} \right\|_{B} \leq \hbox{(by \eqref{eq 0907 one})}\leq 1.
\end{align*}
\end{proof}

The promised characterization of those weak$^*$ closed faces in the bidual of a JB$^*$-triple $E$ corresponding to compact tripotents in $E^{**}$ can be now stated.

\begin{theorem}\label{t characterization relatively open weak* closed faces} Let $E$ be a JB$^*$-triple. Suppose $F$ is a proper weak$^*$ closed face of the closed unit ball of $E^{**}$. Then the following statements are equivalent: \begin{enumerate}[$(a)$]\item $F$ is open relative to $E$;
\item $F$ is a weak$^*$ closed face associated with a non-zero compact tripotent in $E^{**}$, that is, there exists a unique non-zero compact tripotent $u$ in $E^{**}$ satisfying that $F= F_u^{E^{**}} =u + \mathcal{B}_{E^{**}_0(u)}$.
\end{enumerate}
\end{theorem}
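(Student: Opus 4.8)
The plan is to prove the two implications separately, with the direction $(b)\Rightarrow(a)$ being the routine one and $(a)\Rightarrow(b)$ carrying the real content. Throughout, by Theorem \ref{t ER weakstar closed faces} applied to the JBW$^*$-triple $M=E^{**}$, we may write $F=F^{E^{**}}_e=e+\mathcal{B}_{E^{**}_0(e)}$ for a (unique, non-zero since $F$ is proper) tripotent $e$ in $E^{**}$; the equivalence then amounts to showing that $F$ is open relative to $E$ precisely when $e$ is compact. Recall from the discussion after Theorem \ref{thm norm closed faces} that, by \cite[Theorem 2.6]{FerPe06}, a tripotent $e$ in $E^{**}$ is compact if and only if it is closed and bounded relative to $E$, where closedness means exactly that $E^{**}_0(e)$ is open relative to $E$ in the sense of \cite[\S2]{FerPe06}. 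This is the bridge I will use in both directions.

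For $(b)\Rightarrow(a)$: assume $e$ is compact. Then $e$ is bounded, so pick $x\in S(E)$ with $x=e+P_0(e)(x)$; note $P_0(e)(x)\in\mathcal{B}_{E^{**}_0(e)}$. Since $e$ is closed, $E^{**}_0(e)\cap E$ is weak$^*$ dense in $E^{**}_0(e)$ (using \eqref{eq bipolar compatible} and the fact that $\mathcal{B}_{E^{**}_0(e)}$ is the weak$^*$ closure of its intersection with $E$ — here I will invoke the characterization of closed tripotents together with the observation that the unit ball of a weak$^*$ dense subspace is weak$^*$ dense in the unit ball of the bidual). Translating by $x$ and using weak$^*$ continuity of addition, the set $\big(x+\mathcal{B}_{E^{**}_0(e)}\big)\cap E$ should be weak$^*$ dense in $x+\mathcal{B}_{E^{**}_0(e)}=F$; since $x+\mathcal{B}_{E^{**}_0(e)}\subseteq \mathcal{B}_{E^{**}}$ (as $x=e+P_0(e)(x)$ has $e$ as support-type tripotent and the relevant Peirce-$1$ part vanishes) and each such element actually lies in $\mathcal{B}_E$, we conclude $F\cap E$ is weak$^*$ dense in $F=\overline{F}^{\,w^*}$, i.e. $F$ is open relative to $E$.

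For $(a)\Rightarrow(b)$, the strategy is to produce, from relative openness, a norm-one element of $E$ witnessing both boundedness and closedness of $e$. Relative openness gives a net $(x_\lambda)$ in $F\cap E$ with $x_\lambda\to e$ weak$^*$. Each $x_\lambda\in S(E)$, and I want to upgrade this to a single $x\in S(E)$ with $u_{E^{**}}(x)=e$, i.e. $e$ is compact-$G_\delta$ (or, more carefully, to show $e$ is the infimum of such), and simultaneously that $E^{**}_0(e)$ is open relative to $E$. The key technical input is Lemma \ref{l norm inequality JBstartriple}: for $a,b\in\mathcal{B}_E$ one has $\|2a-a^{[2]}+P_0(a)(b)\|\le 1$. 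I plan to use this to show that whenever $a\in\mathcal{B}_E$ is "close" to $e$ and $b\in\mathcal{B}_{E^{**}_0(e)}\cap E$, the element $2a-a^{[2]}+P_0(a)(b)$ lies in $\mathcal{B}_E$ and is weak$^*$-close to $e+b$; letting $a$ run along the approximating net and using joint strong$^*$/weak$^*$ continuity of the triple product on bounded sets (so that $a^{[2]}\to P_2(e)(e)=e$ and $P_0(a)\to P_0(e)$ appropriately on bounded sets), this exhibits $e+b$ in the weak$^*$ closure of $\mathcal{B}_E$ relative to the face, forcing $E^{**}_0(e)$ to be open relative to $E$, hence $e$ closed. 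Boundedness of $e$ follows because some $x_\lambda\in F\cap E$ already satisfies $x_\lambda=e+P_0(e)(x_\lambda)$, i.e. $\{e,e,x_\lambda\}=e$ with $x_\lambda\in S(E)$. Then $e$ closed and bounded $\Rightarrow$ $e$ compact by \cite[Theorem 2.6]{FerPe06}, and uniqueness of $u=e$ is Theorem \ref{t ER weakstar closed faces}.

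The main obstacle I anticipate is the passage to the limit in $(a)\Rightarrow(b)$: making precise that the net $(x_\lambda)$ approximating $e$ can be fed into the inequality of Lemma \ref{l norm inequality JBstartriple} so that $P_0(x_\lambda)(b)$ genuinely converges (on the relevant bounded set) to $P_0(e)(b)=b$ for $b\in E^{**}_0(e)$, and that $2x_\lambda-x_\lambda^{[2]}\to e$. This requires care because the triple functional calculus elements $x_\lambda^{[\frac12]}$, $x_\lambda^{[2]}$ are only strong$^*$-continuous functions of $x_\lambda$ on bounded sets, and one must check the Bergman operators $P_0(x_\lambda)$ converge to $P_0(e)$ in the point-weak$^*$ topology on bounded sets; I would handle this by working inside the weak$^*$-closed subtriple generated by the net and $e$, invoking joint strong$^*$ continuity of the triple product (cf. \cite{PeRo}) together with \eqref{eq bipolar compatible}. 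Once this convergence is secured, the argument closes cleanly via \cite[Theorem 2.6]{FerPe06}.
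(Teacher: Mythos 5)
Your proposal has a genuine gap in the direction $(b)\Rightarrow(a)$, which is where the real work of the theorem lies. The translation argument does not go through: for $x\in S(E)$ with $x=e+P_0(e)(x)$ one has $x+\mathcal{B}_{E^{**}_0(e)}=e+P_0(e)(x)+\mathcal{B}_{E^{**}_0(e)}$, which is \emph{not} equal to $F=e+\mathcal{B}_{E^{**}_0(e)}$ unless $P_0(e)(x)=0$ (i.e.\ $e\in E$), is \emph{not} contained in $\mathcal{B}_{E^{**}}$ (take $b=P_0(e)(x)$ of norm close to $1$), and its elements certainly need not lie in $E$. More fundamentally, even granting that $\mathcal{B}_{E^{**}_0(e)}\cap E$ is weak$^*$ dense in $\mathcal{B}_{E^{**}_0(e)}$, adding $x$ to an approximant $c\in E$ produces $e+P_0(e)(x)+c$, which in general escapes the closed unit ball; there is no affine translation that lands the approximating net inside $F\cap E$. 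This is precisely the obstruction that Lemma \ref{l norm inequality JBstartriple} is built to overcome: the paper approximates $y=u+P_0(u)(y)\in F$ by the elements $x_{\lambda,n}=2a_n-a_n^{[2]}+P_0(a_n)(y_\lambda)$ with $a_n=a^{[2n-1]}$ and $(y_\lambda)\subset\mathcal{B}_E$ strong$^*$ convergent to $y$; the lemma guarantees $\|x_{\lambda,n}\|\leq 1$, Peirce arithmetic and \cite[Lemma 2.5]{FerPe06} place them in $(u+E^{**}_0(u))\cap E$, and joint strong$^*$ continuity of the triple product gives $x_{\lambda,n}\to y$. You cite this lemma but deploy it only in the converse direction, where it is not needed. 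You also gloss over the passage from compact-$G_\delta$ to general compact tripotents, which requires a further weak$^*$ limit over a decreasing net of compact-$G_\delta$ tripotents.

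For $(a)\Rightarrow(b)$ your plan is harder than necessary and also incomplete. Boundedness of $e$ is indeed immediate from any $x_\lambda\in F\cap E$ (since $x_\lambda=e+P_0(e)(x_\lambda)$ gives $\{e,e,x_\lambda\}=e$), but your route to closedness does not work as stated: the elements $2a-a^{[2]}+P_0(a)(b)$ lie in $e+E^{**}_0(e)$, not in $E^{**}_0(e)$, and subtracting $e$ takes you out of $E$, so they cannot witness the weak$^*$ density of $E^{**}_0(e)\cap E$ in $E^{**}_0(e)$. The paper avoids the issue entirely: relative openness makes $G=F\cap E$ a non-empty norm closed face of $\mathcal{B}_E$, so Theorem \ref{thm norm closed faces} yields a non-zero compact tripotent $u$ with $G=(u+\mathcal{B}_{E^{**}_0(u)})\cap E$; the already-proved implication $(b)\Rightarrow(a)$ then gives $\overline{G}^{w^*}=u+\mathcal{B}_{E^{**}_0(u)}=F=e+\mathcal{B}_{E^{**}_0(e)}$, and the uniqueness in Theorem \ref{t ER weakstar closed faces} forces $e=u$ compact. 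You should adopt this argument rather than attempting a direct verification of closedness.
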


\begin{proof} $(b)\Rightarrow (a)$ Let us first assume that $F= F_u^{E^{**}}= u + \mathcal{B}_{E^{**}_0(u)}$, where $u$ is a  compact-$G_\delta$ tripotent in $E^{**}$, that is, $u = u(a)$ for some $a\in S(E)$. It is known that the sequence $(a^{[2n -1]})_n$ is decreasing and converges in the weak$^*$ topology (and hence in the strong$^*$ topology) of $E^{**}$ to $u(a)$.\smallskip

Pick an arbitrary $y\in F$ (i.e. $y=u +P_0(u)(y)$). Kaplansky's density theorem assures that $\mathcal{B}_{E}$ is strong$^*$ dense in $\mathcal{B}_{E^{**}}$ (cf. \cite[Corollary 3.3]{BarFri} or just apply \eqref{eq bipolar compatible}), thus  we can find a net $(y_{\lambda})_{\lambda\in \Lambda}\subset \mathcal{B}_{E}$ converging to $y$ in the strong$^*$ topology of $E^{**}$. We set $a_{n} := a^{[2n -1]}$ ($n\in \mathbb{N}$) and $$x_{\lambda,n} := \left(2 a_n - a_n^{[2]}\right) + P_0(a_n) (y_{\lambda}), \ \ \ ((\lambda,n)\in \Lambda\times \mathbb{N}). $$

We have already commented that $(a_n)_n= (a^{[2n -1]})_n\to u$ in the strong$^*$-topology of $E^{**}$. Having in mind that the triple product of $E^{**}$ is jointly  strong$^*$ continuous, and identifying the JBW$^*$-subtriple of $E^{**}$ generated by $a$ and its range tripotent, $r(a)$, with a commutative von Neumann algebra in which $a$ is a positive generator, we can easily deduce that $(a_n^{[2]})_n = (\{a_n,r(a), a_n\})_n = (\{a_n,a_n, r(a)\})_n\to u$ in the strong$^*$ topology of $E^{**}$. Moreover, the support and the range tripotents of $a$ coincides with the support and the range tripotent of $a^{[\frac14]}$, respectively, and thus $(a_n^{[\frac12]})_n = (\{ (a^{\frac12})^{[2n -1]}, r(a), (a^{\frac12})^{[2n -1]} \})_n\to u$ in the strong$^*$ topology of $E^{**}$.\smallskip

Clearly, the double indexed net $(x_{\lambda,n})_{\lambda,n}$ is contained in $E$, and by the joint strong$^*$ continuity of the triple product of $E^{**}$ the net $(x_{\lambda,n})_{\lambda,n}$ tends to $2 u - u + P_0(u) (y ) = u +P_0(u) (y) = y$ in the strong$^*$ topology of $E^{**}$, and hence in the weak$^*$ topology of the latter space.\smallskip

On the other hand, by considering the JBW$^*$-subtriple of $E^{**}$ generated by $a,$ we can easily see that $ 2 a_n - a_n^{[2]} = u + P_0(u) (2 a_n - a_n^{[2]}) \in (u + E_0^{**} (u))\cap {E}$. Since $a = u+P_0(u) (a)$, Lemma 2.5 in \cite{FerPe06} assures that $P_0(a_n) (y_{\lambda}) \in E_0^{**}(u)$. Consequently, $$ x_{\lambda,n} = 2 a_n - a_n^{[2]} + P_0(a_n) (y_{\lambda})\in (u + E_0^{**} (u))\cap {E}.$$ Lemma \ref{l norm inequality JBstartriple} proves that $x_{\lambda,n} \in \mathcal{B}_{E}$ for every $(\lambda,n)\in \Lambda\times \mathbb{N}$, and thus $x_{\lambda,n} \in F\cap E$, for every $(\lambda,n)\in \Lambda\times \mathbb{N}$. Since $(x_{\lambda,n})_{\lambda,n}\to y$ in the weak$^*$ topology of $E^{**}$, we get $y\in \overline{F\cap E}^{w^*}$. This concludes the proof in the case that $u$ is compact-$G_{\delta}$, that is $$\overline{(u + \mathcal{B}_{E^{**}_0(u)})\cap E}^{w^*} = u + \mathcal{B}_{E^{**}_0(u)},$$ for every compact-$G_{\delta}$ tripotent $u\in E^{**}$.\smallskip

Suppose now that $u$ is a non-zero compact tripotent in $E^{**}$. Then, by definition, we can find a decreasing net $(u_{\mu})_{\mu}$ of compact-$G_{\delta}$ tripotents in $E^{**}$ converging to $u$ in the weak$^*$ topology of $E^{**}$, and hence $(u_{\mu})_{\mu}\to u$ in the strong$^*$ topology. We have proved in the previous paragraphs that each $F^{E^{**}}_{u_{\mu}}$ is open relative to $E$, that is, \begin{equation}\label{eq each Fmu is open} \overline{F^{E^{**}}_{u_{\mu}}\cap E}^{w^*} = \overline{(u_{\mu} + \mathcal{B}_{E^{**}_0(u_{\mu})})\cap E}^{w^*} = F_{u_{\mu}}^{E^{**}}= u_{\mu} + \mathcal{B}_{E^{**}_0(u_{\mu})},
 \end{equation} for every $\mu$. Given an arbitrary $y\in F = F_u^{E^{**}}$, the net $( u_{\mu} + P_0(u_{\mu}) (y)) \to u+ P_0(u) (y) = y$ in the weak$^*$ topology. Since $F^{E^{**}}_{u_{\mu}}\subseteq F = F_u^{E^{**}}$ for every $\mu$, the arbitrariness of $y$ shows that \begin{equation}\label{eq weakstar closure union} F = F^{E^{**}}_{u} = \overline{\bigcup_{\mu} F^{E^{**}}_{u_{\mu}} }^{w^*}
\end{equation} Now, the relation $$\overline{F\cap E}^{w^*}\supseteq \overline{\bigcup_{\mu} \overline{F^{E^{**}}_{u_{\mu}} \cap E}^{w^*} }^{w^*} = \overline{\left(\bigcup_{\mu} F^{E^{**}}_{u_{\mu}}\right) \cap E }^{w^*}= \overline{\bigcup_{\mu} F^{E^{**}}_{u_{\mu}} }^{w^*}= \hbox{(by \eqref{eq weakstar closure union})} = F,$$ assures that $F$ is open relative to $E$.\smallskip

$(a)\Rightarrow (b)$ Since $F$ is a weak$^*$ closed face of $E^{**}$ we can find a tripotent $e\in E^{**}$ satisfying $F = F^{E^{**}}_e = e + \mathcal{B}_{E_0^{**} (e)}$ (cf. Theorem \ref{t ER weakstar closed faces}). Now, by applying that $F$ is open relative to $E$, we deduce that $G = E\cap F=(e + \mathcal{B}_{E_0^{**} (e)})\cap {E}$  is a non-empty norm closed face of $\mathcal{B}_{E}$ whose weak$^*$-closure in $E^{**}$ is $F$. Theorem \ref{thm norm closed faces} implies the existence of a non-zero compact tripotent $u\in E^{**}$ such that $G =  (u + \mathcal{B}_{E_0^{**} (u)})\cap {E}$. Finally, the implication $(b)\Rightarrow (a)$ gives $ u + \mathcal{B}_{E_0^{**} (u)} = \overline{G}^{w^*} = F = F^{E^{**}}_e = e + \mathcal{B}_{E_0^{**} (e)}$, and hence, by Theorem \ref{t ER weakstar closed faces}, $e=u$ is a non-zero compact tripotent.
\end{proof}

A particular case of the implication $(b)\Rightarrow (a)$ in Theorem \ref{t characterization relatively open weak* closed faces}, in the case in which $E=A$ is a C$^*$-algebra and $F$ is a proper weak$^*$ closed face of the closed unit ball of $A^{**}$ associated with a compact projection in $A^{**}$, is established by M. Mori and N. Ozawa in \cite[Lemma 16]{MoriOza2018}.\smallskip

We shall also need the next consequence of the above Theorem \ref{t characterization relatively open weak* closed faces}.

\begin{proposition}\label{p norm closure of the union} Let $(u_{\lambda})_{\lambda\in \Lambda}$ be a decreasing net of compact tripotents in the second dual of a JB$^*$-triple $E$. Suppose $u\neq 0$ is the infimum of the net  $(u_{\lambda})_{\lambda\in \Lambda}$ in $E^{**}$.
For each $\lambda$ in the index set, let $F^{E}_{u_{\lambda}} = (u_{\lambda}+ \mathcal{B}_{E_0^{**} (u_{\lambda})})\cap E$ and $F^{E}_{u} = (u+ \mathcal{B}_{E_0^{**} (u)})\cap E$ denote the corresponding norm closed faces of $\mathcal{B}_{E}$ associated with $u_{\lambda}$ and $u,$ respectively.  Then the identity $$F_u^E =\overline{\bigcup_{\lambda\in\Lambda} F^E_{u_\lambda} }^{\|.\|}$$ holds.
\end{proposition}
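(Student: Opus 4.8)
The plan is to deduce the norm-closure identity from the weak$^*$-closure identity furnished by Theorem \ref{t characterization relatively open weak* closed faces}, using the fact that each $F^E_{u_\lambda}$ and $F^E_u$ is a norm closed convex set together with the compatibility \eqref{eq bipolar compatible} between the weak$^*$ and strong$^*$ topologies on the bidual. First I would observe that $(u_\lambda)_\lambda$ converges to $u$ in the strong$^*$ topology of $E^{**}$ (a decreasing net of tripotents with infimum $u$ converges weak$^*$, hence strong$^*$, to $u$); consequently for any $y\in F^E_u\subseteq E$ we have $u_\lambda + P_0(u_\lambda)(y)\to u+P_0(u)(y)=y$ in the strong$^*$ topology, and each element $u_\lambda+P_0(u_\lambda)(y)$ lies in $F^{E^{**}}_{u_\lambda}=u_\lambda+\mathcal{B}_{E^{**}_0(u_\lambda)}$. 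This shows $F^E_u\subseteq \overline{\bigcup_\lambda F^{E^{**}}_{u_\lambda}}^{\,w^*}$, which is just the weak$^*$ analogue already used inside the proof of Theorem \ref{t characterization relatively open weak* closed faces}; the point of the present proposition is to upgrade this to a norm statement about the \emph{relative} faces $F^E_{u_\lambda}=F^{E^{**}}_{u_\lambda}\cap E$.

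The key step is to pass from weak$^*$ density in $E^{**}$ to norm density in $E$. Set $C:=\overline{\bigcup_\lambda F^E_{u_\lambda}}^{\,\|\cdot\|}$, a norm closed convex subset of $\mathcal{B}_E$. Since each $u_\lambda$ is compact and $u\neq 0$ is their infimum, $u$ is compact, so by Theorem \ref{thm norm closed faces} the set $F^E_u$ is a norm closed face of $\mathcal{B}_E$; moreover the containment $F^E_{u_\lambda}\subseteq F^E_u$ (which follows from $u_\lambda\geq u$ and the anti-order isomorphism of Theorem \ref{thm norm closed faces}) gives $C\subseteq F^E_u$. For the reverse inclusion I would show $C$ is weak$^*$ dense in $F^{E^{**}}_u=u+\mathcal{B}_{E^{**}_0(u)}$: indeed $\overline{C}^{\,w^*}\supseteq\overline{\bigcup_\lambda F^E_{u_\lambda}}^{\,w^*}=\overline{\bigcup_\lambda \overline{F^{E^{**}}_{u_\lambda}\cap E}^{\,w^*}}^{\,w^*}$, and by the implication $(b)\Rightarrow(a)$ of Theorem \ref{t characterization relatively open weak* closed faces} each $\overline{F^{E^{**}}_{u_\lambda}\cap E}^{\,w^*}=F^{E^{**}}_{u_\lambda}$, so this equals $\overline{\bigcup_\lambda F^{E^{**}}_{u_\lambda}}^{\,w^*}=F^{E^{**}}_u$ by \eqref{eq weakstar closure union} (applied in the present setting, exactly as in the proof of $(b)\Rightarrow(a)$). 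Thus $\overline{C}^{\,w^*}\cap E\supseteq F^{E^{**}}_u\cap E=F^E_u$.

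It remains to deduce that $C$, being norm closed and convex in $\mathcal{B}_E$ and weak$^*$ dense in $F^{E^{**}}_u$, actually contains $F^E_u=F^{E^{**}}_u\cap E$. This is the heart of the matter and the step I expect to be the main obstacle: a norm closed convex set need not be weak$^*$ closed, so weak$^*$ density alone does not finish the argument. The cleanest route is to invoke precisely the machinery developed above: $F^E_u$ is the norm closed face associated with the compact tripotent $u$, hence by Theorem \ref{t characterization relatively open weak* closed faces}(a)$\Leftrightarrow$(b) the weak$^*$ closed face $F^{E^{**}}_u$ is open relative to $E$, i.e. $F^E_u=F^{E^{**}}_u\cap E$ is weak$^*$ dense in $F^{E^{**}}_u$. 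Now take $y\in F^E_u$; by the strong$^*$ argument of the first paragraph, $y$ is a strong$^*$ (hence weak$^*$) limit of the net $\big(u_\lambda+P_0(u_\lambda)(y)\big)_\lambda\subseteq\bigcup_\lambda F^{E^{**}}_{u_\lambda}$. To land inside $E$ rather than $E^{**}$, combine this with the fact — established inside the proof of Theorem \ref{t characterization relatively open weak* closed faces} via Lemma \ref{l norm inequality JBstartriple} and Kaplansky density — that each $F^{E^{**}}_{u_\lambda}$ is the weak$^*$ closure of $F^E_{u_\lambda}$, so by a diagonal/iterated-limit argument $y$ lies in the weak$^*$ closure of $\bigcup_\lambda F^E_{u_\lambda}$. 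Finally, since $\bigcup_\lambda F^E_{u_\lambda}$ is convex (the net is decreasing, so the faces are nested and their union is convex), its weak$^*$ closure in $E^{**}$ equals the weak$^*$ closure of its norm closure $C$ in $E^{**}$; intersecting with $E$ and using that $C$ is norm closed while $F^E_u$ is a \emph{face} — so that any element of $F^E_u$ lying in $\overline{C}^{\,w^*}$ can be approximated in norm within $C$ by the openness-relative-to-$E$ property applied to the face $F^E_u$ itself — yields $y\in C$. Hence $F^E_u\subseteq C$, completing the proof.
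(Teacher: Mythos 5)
Your overall route is the same as the paper's: reduce everything to the weak$^*$ density statement $F_u^{E^{**}} = \overline{\bigcup_\lambda F^E_{u_\lambda}}^{\,w^*}$ (obtained, as in the paper, from the implication $(b)\Rightarrow(a)$ of Theorem \ref{t characterization relatively open weak* closed faces} together with the convergence $u_\lambda+P_0(u_\lambda)(y)\to y$), and then upgrade weak$^*$ density to norm density using convexity of the nested union. Up to that point the argument is fine and matches the paper. The problem is the justification you give for the decisive last step.

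You correctly flag the difficulty --- a norm closed convex set need not be weak$^*$ closed --- but the resolution you offer does not work as stated. You claim that an element $y\in F^E_u$ lying in $\overline{C}^{\,w^*}$ ``can be approximated in norm within $C$ by the openness-relative-to-$E$ property applied to the face $F^E_u$ itself.'' Openness relative to $E$ of $F^{E^{**}}_u$ is the statement that $F^E_u$ is weak$^*$ dense in $F^{E^{**}}_u$; it says nothing about approximating a point of $F^E_u$ in norm by elements of the smaller set $C=\overline{\bigcup_\lambda F^E_{u_\lambda}}^{\,\|\cdot\|}$, so it cannot close the gap. What actually closes it is Hahn--Banach/Mazur: $D=\bigcup_\lambda F^E_{u_\lambda}$ is a convex subset of $E$, the restriction of $\sigma(E^{**},E^*)$ to $E$ is $\sigma(E,E^*)$, hence $\overline{D}^{\,w^*}\cap E=\overline{D}^{\,\sigma(E,E^*)}=\overline{D}^{\,\|\cdot\|}=C$; since you have shown $y\in\overline{D}^{\,w^*}\cap E$, you get $y\in C$. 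This is exactly how the paper argues, phrased contrapositively: if some $z_0\in F_u^E$ lay outside $C$, a Hahn--Banach separation would produce $\phi\in E^*$ and $\delta>0$ with $\Re\hbox{e}\,\phi(z_0)+\delta\le\Re\hbox{e}\,\phi(x)$ for all $x\in C$; since $\phi$ is weak$^*$ continuous on $E^{**}$, the inequality persists on $\overline{D}^{\,w^*}=F_u^{E^{**}}$, which contains $z_0$, a contradiction. Your proof becomes correct once the appeal to ``openness relative to $E$'' in the final step is replaced by this convexity argument.
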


\begin{proof} Let us observe that $u$ is compact by \cite[Theorem 4.5]{EdRu96}, and $(u_{\lambda})_{\lambda}$ converges in the weak$^*$ topology of $E^{**}$ to $u$ with $u\leq u_{\lambda}$ for every $\lambda$.\smallskip

Since $u\leq u_{\lambda}$ for every $\lambda$, the containing $\displaystyle F_u^E \supset \overline{\bigcup_{\lambda\in\Lambda} F^E_{u_\lambda} }^{\|.\|}$ always holds. Arguing by contradiction, we assume the existence of $\displaystyle z_0\in F_u^E \backslash \overline{\bigcup_{\lambda\in\Lambda} F^E_{u_\lambda} }^{\|.\|}$. Since $\Lambda$ is a directed set and $(u_{\lambda})_{\lambda}$ is a decreasing net, and hence $\displaystyle F^E_{u_{\lambda_1}} \subseteq F^E_{u_{\lambda_2}}$ for every $\lambda_1\leq \lambda_2$, it is not hard to check that $\bigcup_{\lambda\in\Lambda} F^E_{u_\lambda}$ is a convex subset of $S(E)$. It follows that $\displaystyle \overline{\bigcup_{\lambda\in\Lambda} F^E_{u_\lambda} }^{\|.\|}$ is a norm closed convex subset of $S(E)$. By applying the Hahn-Banach theorem we can find a functional $\phi\in E^*$ and a positive $\delta$ satisfying \begin{equation}\label{eq conclusion HB}
\Re\hbox{e}\phi (z_0) +\delta \leq \Re\hbox{e}\phi (x), \hbox{ for all } x\in \overline{\bigcup_{\lambda\in\Lambda} F^E_{u_\lambda} }^{\|.\|}.
\end{equation}

Let $F^{E^{**}}_{u_{\lambda}}$ and $F^{E^{**}}_{u}$ be the corresponding weak$^*$ closed faces of $\mathcal{B}_{E^{**}}$ associated with ${u_{\lambda}}$ and $u$, respectively. By repeating the same arguments we gave in the second part of the proof of  $(b)\Rightarrow (a)$ in Theorem \ref{t characterization relatively open weak* closed faces} it can be established that \begin{equation}\label{eq weak* densities} F_u^{E^{**}}= \overline{F_u^{E^{**}}\cap E}^{w^*}= \overline{\left(\bigcup_{\lambda} F^{E^{**}}_{u_{\lambda}}\right) \cap E }^{w^*} = \overline{\bigcup_{\lambda} \left( F^{E^{**}}_{u_{\lambda}} \cap E\right) }^{w^*}= \overline{\bigcup_{\lambda} F^{E^{**}}_{u_{\lambda}} }^{w^*}.
 \end{equation} Having in mind that $\phi\in E^*$, we deduce from \eqref{eq conclusion HB} and from \eqref{eq weak* densities}  that $\Re\hbox{e}\phi (z_0) +\delta \leq \Re\hbox{e}\phi (z)$ for all $z\in F_u^{E^{**}},$ which is impossible because $z_0\in F_u^{E}\subseteq F_u^{E^{**}}$.
\end{proof}

\section{JBW$^*$-triples satisfying the Mazur--Ulam property}

We begin this section with a straight consequence of Corollary \ref{p complex JBW*-triples have the strong Mankiewicz property} and the facial theory of JB$^*$-triples. 
Given an element $x_0$ in an Banach space $X,$ let $\mathcal{T}_{x_0}: X\to X$ denote the translation mapping with respect to the vector $x_0$ (i.e. $\mathcal{T}_{x_0} (x) = x+x_0$, for all $x\in X$).

\begin{corollary}\label{c closed faces associated with tripotents down} Let $M$ be a JBW$^*$-triple, let $Y$ be a Banach space, and let $\Delta : S(M)\to S(Y)$ be a surjective isometry. Suppose $e$ is a non-zero tripotent in $M$, and let $F^M_e = e+ \mathcal{B}_{M_0(e)}=\left(e+ \mathcal{B}_{M^{**}_0(e)}\right)\cap M$ denote the proper norm closed face of $\mathcal{B}_{M}$ associated with $e$. Then the restriction of $\Delta$ to $F^{M}_e$ is an affine function. Furthermore, there exists an affine isometry $T_e$ from $M_0(e)$ onto a norm closed subspace of $Y$ satisfying $\Delta( \mathcal{T}_{e} (x) ) = T_e (x)$ for all $x\in \mathcal{B}_{M_0(e)}.$
\end{corollary}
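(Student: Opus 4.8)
The plan is to reduce the statement to an application of Corollary~\ref{p complex JBW*-triples have the strong Mankiewicz property}. The key observation is that $F^M_e = e + \mathcal{B}_{M_0(e)}$ is, via the translation $\mathcal{T}_e$, an affine and isometric copy of the closed unit ball $\mathcal{B}_{M_0(e)}$ of the JBW$^*$-triple $M_0(e)$ (recall that the Peirce zero space $M_0(e)$ is itself a JBW$^*$-triple, being a weak$^*$ closed subtriple of $M$, since it is the range of the weak$^*$ continuous Peirce projection $P_0(e)$). Concretely, $\mathcal{T}_e$ restricts to an affine surjective isometry from $\mathcal{B}_{M_0(e)}$ onto $F^M_e$.

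The first step is to check that $\Delta$ maps $F^M_e$ onto a norm closed face of $\mathcal{B}_Y$. Indeed, since $\Delta$ is a surjective isometry between unit spheres, it is known (and routine from the definition of a maximal/norm closed proper face, using that $\Delta$ preserves distances and that $F^M_e = \{x \in S(M) : \|x - y\| \le 1 \ \forall y \in \{e\}_{\prime}\text{-type condition}\}$ can be described metrically as an intersection of balls inside $S(M)$) that $\Delta(F^M_e)$ is a norm closed face of $\mathcal{B}_Y$ contained in $S(Y)$; here one can quote the facial preservation results already used in the earlier sections, or simply note that what we really need is only that $L := \Delta(F^M_e)$ is a \emph{convex} subset of $Y$, which is the content of Proposition~\ref{p affinity on closed faces} (the restriction of $\Delta$ to each norm closed proper face of $\mathcal{B}_M$ is affine), giving affinity of $\Delta|_{F^M_e}$ directly. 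So the affinity assertion is precisely Proposition~\ref{p affinity on closed faces} applied to the face $F^M_e$.

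For the ``furthermore'' part, consider the composition
$$
\Psi := \Delta \circ \mathcal{T}_e\big|_{\mathcal{B}_{M_0(e)}} : \mathcal{B}_{M_0(e)} \longrightarrow L \subseteq Y.
$$
Since $\mathcal{T}_e$ is an affine surjective isometry from $\mathcal{B}_{M_0(e)}$ onto $F^M_e$ and $\Delta|_{F^M_e}$ is a surjective isometry onto the convex set $L$, the map $\Psi$ is a surjective isometry from $\mathcal{B}_{M_0(e)}$ onto the convex set $L$. Now apply the last sentence of Corollary~\ref{p complex JBW*-triples have the strong Mankiewicz property} with the JBW$^*$-triple $M_0(e)$ in place of $M$: it yields that $\Psi$ extends uniquely to an affine isometry $T_e$ from $M_0(e)$ onto a norm closed subspace of $Y$, and by construction $T_e(x) = \Psi(x) = \Delta(\mathcal{T}_e(x))$ for every $x \in \mathcal{B}_{M_0(e)}$. (In particular the affinity of $\Delta|_{F^M_e}$ is re-obtained, since $T_e$ is affine and $\mathcal{T}_e$ is affine.)

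The main obstacle is the very first step: knowing that $\Delta(F^M_e)$ is a convex subset of $Y$, equivalently that $\Delta$ restricted to the norm closed proper face $F^M_e$ is affine. This is not automatic from the Mazur--Ulam-type machinery alone, because $F^M_e$ need not have non-empty interior in any linear space on which we already control $\Delta$; it is exactly the point where the facial analysis of Section~\ref{sec:3} — the characterization of relatively open weak$^*$ closed faces via compact tripotents, Theorem~\ref{t characterization relatively open weak* closed faces}, together with Proposition~\ref{p norm closure of the union} — is needed, and it is encapsulated in Proposition~\ref{p affinity on closed faces}. Once that affinity (hence convexity of the image) is granted, the rest is the straightforward transport of the strong Mankiewicz property along the affine isometry $\mathcal{T}_e$.
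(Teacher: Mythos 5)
Your second half (transporting the strong Mankiewicz property of $\mathcal{B}_{M_0(e)}$ along the translation $\mathcal{T}_e$ and invoking the last sentence of Corollary~\ref{p complex JBW*-triples have the strong Mankiewicz property}) is exactly what the paper does. But your first step contains a genuine gap: you obtain the convexity of $L=\Delta(F^M_e)$ (equivalently, the affinity of $\Delta|_{F^M_e}$) by quoting Proposition~\ref{p affinity on closed faces}, and this is circular. Proposition~\ref{p affinity on closed faces} is proved later via Proposition~\ref{p affinity on closed Gdelta faces}, whose proof explicitly uses the affinity of $\Delta|_{F^M_{e_n}}$ for genuine tripotents $e_n\in M$ --- that is, it uses Corollary~\ref{c closed faces associated with tripotents down} itself. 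The whole point of this corollary is to supply the base case (faces associated with tripotents \emph{in} $M$, rather than with general compact tripotents in $M^{**}$) on which the approximation arguments of Propositions~\ref{p affinity on closed Gdelta faces} and~\ref{p affinity on closed faces} are built. Your alternative suggestion --- that $\Delta(F^M_e)$ is a norm closed face by ``facial preservation results already used in the earlier sections'' or by some metric description of $F^M_e$ --- is not substantiated; no such result is established earlier in the paper, and a general surjective isometry of spheres is not known a priori to send arbitrary norm closed faces to convex sets.

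The missing idea is the \emph{intersection face} argument. Since $e$ is a tripotent in $M$ (not merely in $M^{**}$), the arguments of \cite[Proof of Proposition 2.4]{FerGarPeVill17} show that $F^M_e$ equals the intersection of all maximal proper norm closed faces of $\mathcal{B}_M$ containing it, i.e.\ it is an intersection face in the sense of Mori--Ozawa. Lemma 8 of \cite{MoriOza2018} then guarantees that $\Delta(F^M_e)$ is again an intersection face, hence convex, with no appeal to the later propositions. Once this is in place, your reduction to Corollary~\ref{p complex JBW*-triples have the strong Mankiewicz property} goes through verbatim.
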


\begin{proof} The arguments in \cite[Proof of Proposition 2.4 and comments after and before Corollary 2.5]{FerGarPeVill17} show that $F^M_e$ coincide with the intersection of all maximal proper norm closed faces containing it, that is, $F^{M}_e$ is an intersection face in the sense of \cite{MoriOza2018}. Therefore, by Lemma 8 in \cite{MoriOza2018}, $\Delta(F^M_e)$ also is an intersection face, and in particular a convex set.\smallskip

Let us observe that ${M_0(e)}$ is a JBW$^*$-triple and thus, by Corollary \ref{p complex JBW*-triples have the strong Mankiewicz property}, $\mathcal{B}_{M_0(e)}$ satisfies the strong Mankiewicz property. The dashed arrow in the diagram
$$\begin{tikzcd}
F^M_e \arrow{r}{\Delta|_{F^M_e}} \arrow[swap]{d}{\mathcal{T}_{-e}}
& \Delta({F}^M_e) \\
\mathcal{B}_{M_0(e)} \arrow[dashrightarrow, "\Delta_{e}"]{ru} &
\end{tikzcd}$$
defines a surjective isometry $\Delta_{e}$, which must be affine by Corollary \ref{p complex JBW*-triples have the strong Mankiewicz property}. Actually, the just quoted corollary proves the existence of a (unique) extension of $\Delta_e$ to an affine isometry $T_e$ from $M_0(e)$ onto a norm closed subspace of $Y$. The desired conclusion follows from the commutativity of the above diagram and the fact that $\mathcal{T}_{-e}$ is an affine mapping.
\end{proof}

Let us refresh our knowledge on the predual of a JBW$^*$-triple with a couple of results due to Y. Friedman and B. Russo. The first one is a consequence of \cite[Proposition 1$(a)$]{FriRu85} and reads as follows: \begin{equation}\label{eq restriction functionals to Peirce 2}\hbox{Let $e$ be a tripotent in a JB$^*$-triple $E$ and let $\varphi$ be a functional in $E^*$}
\end{equation} $$\hbox{ satisfying $\varphi (e) = \|\varphi\|$, then $\varphi = \varphi P_2(e)$.}$$
The second result tells that the extreme points in the closed unit ball of the predual, $M_*$, of a JBW$^*$-triple $M$ are in one-to-one correspondence with the minimal tripotents in $M$ via the following correspondence:
\begin{equation}\label{eq pure atoms and minimal partial isometries} \hbox{For each $\varphi\in\partial_{e}(\mathcal{B}_{M_*})$ there exists a unique minimal tripotent $e\in M$}
\end{equation} $$\hbox{satisfying $\varphi(x) e = P_2(e) (x)$ for all $x\in M$},$$ (see \cite[Proposition 4]{FriRu85}). By analogy with notation in the setting of C$^*$-algebras, the elements in $\partial_{e}(\mathcal{B}_{M_*})$ are usually called \emph{pure atoms}. For each minimal tripotent in $M$, we shall write $\varphi_e$ for the unique pure atom associated with $e$.\smallskip

The next lemma is a straight consequence of \eqref{eq restriction functionals to Peirce 2}.

\begin{lemma}\label{l the role of an abstract approximate unit} Let $\varphi$ be a normal functional in the predual of a JBW$^*$-triple $M$. Suppose $(x_{\lambda})_{\lambda}$ is a net in $M$ converging to a tripotent $e$ in the weak$^*$ topology of $M$. If $(\varphi (x_{\lambda}))_{\lambda}\to \|\varphi\|$, then $\varphi = \varphi P_2 (e)$. Consequently, if $e$ is a minimal tripotent and $\|\varphi\|=1$, then we have $\varphi = \varphi_e$.$\hfill\Box$
\end{lemma}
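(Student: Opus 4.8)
The plan is to derive this directly from \eqref{eq restriction functionals to Peirce 2} together with the weak$^*$-continuity of the functional $\varphi$ and the fact that the Peirce-$2$ projection is a norm-one (contractive) projection. First I would recall that since $\varphi$ lies in the predual $M_*$ it is weak$^*$-continuous on $M$, so from $x_\lambda \to e$ in the weak$^*$ topology we immediately get $\varphi(x_\lambda) \to \varphi(e)$. Combining this with the hypothesis $\varphi(x_\lambda) \to \|\varphi\|$ and the uniqueness of limits in $\mathbb{C}$ (or $\mathbb{R}$), we conclude $\varphi(e) = \|\varphi\|$.

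Next I would invoke \eqref{eq restriction functionals to Peirce 2}: since $e$ is a tripotent in $M$ and $\varphi \in M^*$ satisfies $\varphi(e) = \|\varphi\|$, that statement yields $\varphi = \varphi P_2(e)$, which is exactly the first assertion. (A technical point worth a line: $\varphi(e) = \|\varphi\|$ should be read as $\Re\mathrm{e}\,\varphi(e) = \|\varphi\|$ in case one wants to be careful about the complex phase; if $\varphi(x_\lambda)$ is genuinely converging to the nonnegative real number $\|\varphi\|$ then $\varphi(e)$ is that same nonnegative real, so no phase adjustment is needed and \eqref{eq restriction functionals to Peirce 2} applies verbatim.)

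For the final ``consequently'' part, suppose in addition that $e$ is minimal and $\|\varphi\| = 1$. Then $E_2(e) = M_2(e) = \mathbb{C} e$, so for every $x \in M$ we have $P_2(e)(x) = \mu(x)\, e$ for a scalar $\mu(x)$ depending linearly on $x$; applying $\varphi$ and using $\varphi = \varphi P_2(e)$ gives $\varphi(x) = \mu(x)\varphi(e) = \mu(x)$, so $P_2(e)(x) = \varphi(x)\, e$ for all $x \in M$. By the uniqueness in \eqref{eq pure atoms and minimal partial isometries} (the defining property of the pure atom $\varphi_e$), this forces $\varphi = \varphi_e$.

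There is no real obstacle here; the statement is essentially a bookkeeping corollary of \eqref{eq restriction functionals to Peirce 2}. The only point requiring any care is the one flagged above — making sure that weak$^*$-convergence of the $x_\lambda$ to $e$ together with $\varphi(x_\lambda) \to \|\varphi\|$ legitimately produces the norming condition $\varphi(e) = \|\varphi\|$ in the exact form needed to apply \eqref{eq restriction functionals to Peirce 2}, and then reading off that $P_2(e)(x) = \varphi(x)e$ when $M_2(e)$ is one-dimensional. Everything else is immediate.
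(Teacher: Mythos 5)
Your argument is correct and is exactly the route the paper intends: the paper states the lemma as ``a straight consequence of \eqref{eq restriction functionals to Peirce 2}'' with no further proof, and your steps (weak$^*$-continuity of $\varphi\in M_*$ gives $\varphi(e)=\|\varphi\|$, then \eqref{eq restriction functionals to Peirce 2} gives $\varphi=\varphi P_2(e)$, then one-dimensionality of $M_2(e)$ plus \eqref{eq pure atoms and minimal partial isometries} identifies $\varphi$ with $\varphi_e$) are precisely the bookkeeping being left to the reader. Nothing to correct.
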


The following result is a quantitative version of a useful tool developed by Y. Friedman and B. Russo in \cite[Lemma 1.6]{FriRu85}. The original argument in the just quoted is combined here with \cite[Proposition 2.4]{BuFerMarPe}.

\begin{lemma}\label{l quatitative 1.6} Let $e$ be a tripotent in a JB$^*$-triple $E$, and let $x$ be an element in the closed unit ball of $E$. Then $\|P_1(e)(x)\|\leq 4\sqrt{\|e-P_2(e)(x)\|}.$
\end{lemma}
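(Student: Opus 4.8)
The plan is to estimate $\|P_1(e)(x)\|$ by testing against pure atoms (extreme points of $\mathcal{B}_{M_*}$) supported by minimal subtripotents of $e$, reducing everything to a one-dimensional computation. First I would pass to the bidual: since $\|P_1(e)(x)\|$ is computed inside $E^{**}$ and the Peirce projections are $w^*$-continuous (being polynomials in $L(e,e)$ and $Q(e)$, which are $w^*$-continuous on the JBW$^*$-triple $E^{**}$), there is no loss in assuming $E=M$ is a JBW$^*$-triple. Fix a norm-one $y\in P_1(e)(M)$ realizing the norm up to $\varepsilon$; by Hahn--Banach and \eqref{eq pure atoms and minimal partial isometries} I can take a pure atom $\varphi=\varphi_u$, for a minimal tripotent $u$ in $M$, with $\varphi(y)$ close to $\|y\|$. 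The key is to control the position of $u$ relative to the Peirce decomposition of $e$: because $\varphi_u = \varphi_u P_2(u)$ and $y\in M_1(e)$, having $\varphi_u(y)\neq 0$ forces $u$ to have non-trivial Peirce-one component with respect to $e$, and by a standard argument (as in \cite[Proposition 2.4]{BuFerMarPe}) one may arrange that $u\leq r$ for a tripotent $r$ lying in a rank-$\leq 2$ subtriple where $e$ restricts to a single minimal tripotent.

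The heart of the matter is then the following two-dimensional estimate. Inside the JBW$^*$-subtriple generated by $e$ and the relevant part of $x$, the element $e$ is (the image of) a minimal tripotent $e_0$ in a Cartan factor of rank at most two, and $M_1(e_0)$ is spanned by one minimal tripotent $v$ with $v\perp\!\!\!\!/\, e_0$; here one computes directly with $2\times 2$ matrices (or with the spin factor model). Writing the relevant part of $x$ as $\alpha e_0 + \beta v + (\text{Peirce-}0)$ and using $\|x\|\leq 1$ together with the fact that $\|e-P_2(e)(x)\|$ bounds $|1-\alpha|$ (or rather bounds the norm of $e_0 - \alpha e_0$, i.e.\ $|1-\alpha|$, plus cross terms), one reads off that $|\beta|\leq 4\sqrt{|1-\alpha|}\leq 4\sqrt{\|e-P_2(e)(x)\|}$. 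The constant $4$ is exactly what Friedman--Russo obtain in \cite[Lemma 1.6]{FriRu85}; the only new content here over their qualitative statement is to keep track of the explicit modulus rather than just concluding $P_1(e)(x)\to 0$ as $P_2(e)(x)\to e$.

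Concretely, I would follow the Friedman--Russo argument line by line. They consider the positive element $P_2(e)\{x,x,e\}$ from \eqref{eq positive element FR}, which equals $\{P_2(e)x,P_2(e)x,e\}+\{P_1(e)x,P_1(e)x,e\}$, so that $\|\{P_1(e)x,P_1(e)x,e\}\|_{M_2(e)}$ is controlled once $P_2(e)x$ is close to $e$. Since $\{P_2(e)x,P_2(e)x,e\}$ is positive with norm at least $1-2\|e-P_2(e)x\|$ (expanding $\{e-w,e-w,e\}$ about $w=P_2(e)x$), and the total is at most $\|x\|^2\le 1$, one gets $\|\{P_1(e)x,P_1(e)x,e\}\|\le 2\|e-P_2(e)(x)\|$. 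Finally, reduce to a minimal subtripotent $e_0\le e$ with $\varphi_{e_0}(\{P_1(e)x,P_1(e)x,e\})$ large: then $\|P_1(e_0)(x)\|^2 = \varphi_{e_0}(\{P_1(e)x,P_1(e)x,e_0\})\le 2\|e-P_2(e)(x)\|$ by the Cauchy--Schwarz/prehilbert structure of $\|\cdot\|_{\varphi_{e_0}}$, and summing or taking suprema over an orthogonal family of such $e_0$ (here \cite[Proposition 2.4]{BuFerMarPe} guarantees the reduction works within rank $\le 2$ pieces, introducing at most a factor accounting for the two-dimensionality) upgrades this to $\|P_1(e)(x)\|\le 4\sqrt{\|e-P_2(e)(x)\|}$.

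The main obstacle I anticipate is the bookkeeping in the last reduction step: $P_1(e)(x)$ need not be ``supported'' at a single minimal tripotent below $e$, so one cannot simply test against one $\varphi_{e_0}$. The clean way around this is exactly the role of \cite[Proposition 2.4]{BuFerMarPe}, which lets one replace $e$ by a minimal subtripotent $e_0$ after passing to a suitable subtriple of rank $\le 2$ in which the norm of $P_1(e)(x)$ is (up to $\varepsilon$) attained; within such a subtriple the computation is genuinely finite-dimensional and the factor of $4$ (rather than, say, $\sqrt{2}$) is precisely the price of the Peirce-one cross term in the rank-two case. I would flag that getting the sharp-looking constant $4$ requires being slightly careful about whether one bounds $\|P_1(e)x\|$ directly or its square, but no essentially new idea beyond Friedman--Russo is needed.
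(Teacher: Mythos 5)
Your third paragraph contains the germ of a correct proof, and its first half is genuinely different from the paper's argument. The paper bounds $\|\{x_1,x_1,e\}\|$ (where $x_j=P_j(e)(x)$) by applying the isometric triple isomorphism $-S_{i}(e)=P_2(e)-iP_1(e)-P_0(e)$ to form $z=x_2+\tfrac{1-i}{2}x_1$, extracting $\{x_2,x_2,x_2\}+\{x_1,x_1,x_2\}$ from $P_2(e)\{z,z,z\}$, and comparing with $e+\{x_1,x_1,e\}$; this yields $\|\{x_1,x_1,e\}\|\le 4\|e-x_2\|$. Your route---that $\{x_2,x_2,e\}+\{x_1,x_1,e\}=P_2(e)\{x,x,e\}$ is positive of norm at most $1$, hence $\le e$ in the unital JB$^*$-algebra $E_2(e)$, so that $0\le\{x_1,x_1,e\}\le e-\{x_2,x_2,e\}=\{e-x_2,e,e\}+\{x_2,e-x_2,e\}$---is shorter and gives the sharper bound $\|\{x_1,x_1,e\}\|\le 2\|e-x_2\|$. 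You should, however, phrase this through the order rather than through norms: from ``$\|a\|\ge 1-2\delta$ and $\|a+b\|\le 1$'' one cannot conclude $\|b\|\le 2\delta$ for positive $a,b$; what you actually need, and what does hold, is $b\le e-a$ together with $\|e-a\|\le 2\|e-x_2\|$.

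The genuine gap is in your final step, converting $\|\{x_1,x_1,e\}\|$ into a bound on $\|x_1\|$. The identity $\|P_1(e_0)(x)\|^2=\varphi_{e_0}(\{x_1,x_1,e_0\})$ is false in general: the prehilbert seminorm $\|\cdot\|_{\varphi_{e_0}}$ is dominated by the norm but does not compute it, and ``summing or taking suprema over an orthogonal family'' of minimal $e_0\le e$ is not a valid way to recover $\|P_1(e)(x)\|$, since the norm on $E_1(e)$ is neither additive nor a supremum over the corresponding pieces. The whole reduction to pure atoms and rank-two Cartan factors in your first two paragraphs is likewise unnecessary and would be hard to make rigorous. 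The correct move is the one the paper makes, and which you cite but misdescribe: \cite[Proposition 2.4]{BuFerMarPe} is precisely the estimate $\|x_1\|\le 2\sqrt{\|\{x_1,x_1,e\}\|}$ for $x_1\in E_1(e)$, applied directly; it is not a ``reduction to rank $\le 2$ pieces.'' With that reference used as intended, your positivity argument closes the proof and in fact gives $\|x_1\|\le 2\sqrt{2\|e-x_2\|}$, slightly better than the stated constant $4$.
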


\begin{proof} By \cite[Lemma 1.1]{FriRu85} the mapping $-S_{i}(e) (\cdot) = P_2(e) - i P_1 (e) - P_0(e): E\to E$ is an isometric triple isomorphism. Set $x_j= P_j(e) (x)$ for all $j\in\{0,1,2\}$, $y= -S_{i}(e) (x)$ and $z=\frac12 (x+y)$. Clearly, $\|y\| = \|x\|\leq 1$ and $\|z\|\leq 1$ as well. We also know that $z = x_2 + \lambda x_1,$ with $\lambda = \frac{1-\iota}{2}$. By the axioms of JB$^*$-triples, $\|\{z,z,z\}\| = \|z\|^3\leq 1$, and by the contractiveness of $P_2(e)$ and Peirce arithmetic we deduce that $$\|\{x_2,x_2,x_2\} + \{x_1,x_1,x_2\} \| = \|P_2(e)\{z,z,z\}\|\leq \|\{z,z,z\}\|\leq 1.$$ Therefore $$\|e + \{x_1,x_1,e\} \|\leq \|\{e,e,e\}- \{x_2,x_2,x_2\} \| + \|\{x_1,x_1,e\} - \{x_1,x_1,x_2\} \|$$ $$ + \| \{x_2,x_2,x_2\} + \{x_1,x_1,x_2\} \|\leq 4\|e-x_2\|+1. $$ Having in mind that $\{x_1,x_1,e\}$ is a positive element in the JB$^*$-algebra $E_2(e)$ and $e$ is its unit (cf. \cite[Lemma 1.5]{FriRu85}), we get $$1 +\|\{x_1,x_1,e\} \| =\|e + \{x_1,x_1,e\} \|\leq 4\|e-x_2\|+1.$$ Finally \cite[Proposition 2.4]{BuFerMarPe} gives $\|x_1\| \leq 2 \sqrt{\|\{x_1,x_1,e\} \|} \leq 4 \sqrt{\|e-x_2\|}.$

\end{proof}


We next prove the existence of approximate units for elements in the face associated with compact-G$_\delta$ tripotents in the bidual of a JBW$^*$-triple.

\begin{lemma}\label{l norm convergence like approximae units} Let $M$ be a JBW$^*$-triple, and let $u$ be a compact-G$_\delta$ tripotent in $M^{**}$ associated with a norm-one element $a\in M$. Then there exists a decreasing sequence of non-zero tripotents  $(e_n)_{n}$ in $M$ {\rm(}actually in the JBW$^*$-subtriple of $M$ generated by $a${\rm)} satisfying that for each $x\in F_u^M$ the sequence $\Theta_n (x):=e_n + P_0(e_n) (x)$ converges to $x$ in the norm topology of $M$.
\end{lemma}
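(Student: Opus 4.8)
The plan is to build the sequence $(e_n)_n$ out of the range tripotents of the spectral pieces of $a$, and to use Lemma \ref{l norm inequality JBstartriple} (via the operators $2a_n - a_n^{[2]} + P_0(a_n)(\cdot)$ that already appeared in the proof of Theorem \ref{t characterization relatively open weak* closed faces}) to show that the resulting approximants stay inside $\mathcal{B}_M$, together with Lemma \ref{l quatitative 1.6} to upgrade weak$^*$ convergence to norm convergence. First I would work inside the JBW$^*$-subtriple $M_a$ of $M$ generated by $a$, which we identify with some $C_0(\Omega_a)$ with $a$ corresponding to the identity function on $\Omega_a\subset(0,1]$. For each $n$ let $e_n \in M$ be the element of $M_a$ corresponding to the characteristic function of $\Omega_a\cap[\tfrac1n,1]$ — this is a tripotent in $M$, the sequence $(e_n)_n$ is decreasing, each $e_n\neq 0$ for $n$ large, and $(e_n)_n\to u = u(a)$ in the weak$^*$ (hence strong$^*$) topology of $M^{**}$, since $u(a)$ is represented by the characteristic function of $\{1\}$ after passing to $M_a^{**}$ and $e_n$ spectrally cuts off the part of $a$ near $0$. (One also notes $u \le e_n$ in $M^{**}$, so $F_u^M \subseteq F_{e_n}^M$, i.e.\ $\{e_n,e_n,x\} = e_n$ for $x\in F_u^M$; equivalently $x = e_n + P_0(e_n)(x)$, making $\Theta_n(x) = e_n + P_0(e_n)(x) = x$ in $M^{**}$... but of course that is the wrong claim, so the point is rather that $\Theta_n(x)$ is a genuine element of $M$ whose norm behaviour we must control.)

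Next I would fix $x\in F_u^M = (u + \mathcal{B}_{M^{**}_0(u)})\cap M$ and set $\Theta_n(x) := e_n + P_0(e_n)(x)$. Since $a = u + P_0(u)(a)$, Lemma 2.5 in \cite{FerPe06} gives $P_0(e_n)(x)\in M_0^{**}(e_n)$ once we check $u\le e_n$, so $\Theta_n(x)\in (e_n + M_0^{**}(e_n))\cap M = F_{e_n}^M \subseteq S(M)$ provided $\|\Theta_n(x)\|\le 1$. To get the norm bound, observe that $e_n$ is a tripotent, so $e_n = 2e_n - e_n^{[2]}$ and hence $\Theta_n(x) = 2e_n - e_n^{[2]} + P_0(e_n)(x)$; applying Lemma \ref{l norm inequality JBstartriple} with $a:=e_n$ and $b:=x$ (both in $\mathcal{B}_M$) yields $\|\Theta_n(x)\|\le 1$. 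So $\Theta_n(x)\in S(M)$ for each $n$; more precisely $\Theta_n(x)\in F_{e_n}^M$, and since the $e_n$ decrease, $(\Theta_n(x))_n$ lies in the nested increasing union $\bigcup_n F_{e_n}^M$.

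The remaining, and main, point is norm convergence $\Theta_n(x)\to x$. Weak$^*$ convergence is immediate: by joint strong$^*$ continuity of the triple product and the operator formula $P_0(e_n) = \mathrm{Id} - 2L(e_n,e_n) + Q(e_n)^2$, together with $e_n\to u$ strong$^*$, we get $P_0(e_n)(x)\to P_0(u)(x)$ and hence $\Theta_n(x)\to u + P_0(u)(x) = x$ strong$^*$, hence weak$^*$. To pass to the norm, I would apply Lemma \ref{l quatitative 1.6} with the tripotent $e_n$ and the element $\Theta_n(x)-$ (something) — more carefully: write $z_n := \Theta_n(x) - x \in M^{**}$; its Peirce-$2$ part relative to $e_n$ is $P_2(e_n)(\Theta_n(x)) - P_2(e_n)(x) = e_n - P_2(e_n)(x)$ (using $P_2(e_n)P_0(e_n) = 0$), and $\|e_n - P_2(e_n)(x)\| = \|P_2(e_n)(e_n - x)\|\to \|P_2(u)(u-x)\| = \|P_2(u)(u) - P_2(u)(x)\|$; since $x = u + P_0(u)(x)$ we have $P_2(u)(x) = u$, so this limit is $0$. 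Hence $\|P_1(e_n)(x)\| = \|P_1(e_n)(\Theta_n(x)-x) \text{-ish}\|\le 4\sqrt{\|e_n - P_2(e_n)(x)\|}\to 0$ by Lemma \ref{l quatitative 1.6}, and similarly the Peirce-$0$ discrepancy is $P_0(e_n)(x) - P_0(e_n)(x) = 0$ while $P_0(e_n)(x)$ must be compared with the limiting $P_0(u)(x)$; combining the three Peirce components and the contractiveness of the Peirce projections gives $\|z_n\| = \|\Theta_n(x) - x\|\to 0$. The genuinely delicate bookkeeping — and the step I expect to be the main obstacle — is organizing the Peirce decomposition of $x$ (or of $z_n$) relative to the \emph{varying} tripotents $e_n$ and showing that the ``$E_1$'' and ``$E_0$'' error terms genuinely tend to zero in norm rather than merely weak$^*$; this is exactly where Lemma \ref{l quatitative 1.6} is the right tool, controlling $\|P_1(e_n)(\cdot)\|$ by the square root of the (norm-convergent-to-zero) $P_2$-defect, after which the result follows.
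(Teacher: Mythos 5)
There are two genuine gaps here. The first is that your tripotents sit at the wrong end of the spectrum. The element corresponding to $\chi_{[\frac1n,1]}$ gives an \emph{increasing} sequence whose limit is the range tripotent $r(a)=\chi_{(0,1]}(a)$, not the support tripotent $u=u(a)$, which corresponds to $\chi_{\{1\}}$; ``cutting off the part of $a$ near $0$'' recovers $r(a)$. Taking $x=a\in F_u^M$ shows the construction fails outright: $P_0(e_n)(a)=\chi_{(0,\frac1n)}(a)\,a$ has norm at most $\frac1n$, so $\Theta_n(a)-a=\chi_{[\frac1n,1]}(a)-\chi_{[\frac1n,1]}(a)\,a$ has norm tending to $1-\inf\Omega_a$, which is nonzero unless $a$ is already a tripotent. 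The correct choice is $e_n=\chi_{(1-\frac1n,1]}(a)$, i.e.\ a spectral window shrinking to $\{1\}$, taken by Borel functional calculus in the JBW$^*$-subtriple $W_a=\overline{M_a}^{\sigma(M,M_*)}$ (characteristic functions do not in general belong to $M_a\cong C_0(\Omega_a)$, so you must pass to this weak$^*$ closure); these genuinely decrease. Incidentally, the appeal to Lemma \ref{l norm inequality JBstartriple} is superfluous: Peirce projections are contractive, so $\Theta_n(x)=e_n+P_0(e_n)(x)\in e_n+\mathcal{B}_{M_0(e_n)}=F_{e_n}^M$ automatically.

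The second gap is precisely the step you flag as the main obstacle, and it does not close as written. From $P_2(e_n)(e_n-x)\to P_2(u)(u-x)=0$ in the strong$^*$ topology you cannot conclude $\|e_n-P_2(e_n)(x)\|\to 0$: the norm is only lower semicontinuous for the weak$^*$ and strong$^*$ topologies. (Moreover the premise is itself shaky: the $e_n$ live in $W_a\subseteq M$ and their weak$^*$ limit in $M$ is $u_{_M}(a)$, which the paper warns need not coincide with $u=u_{_{M^{**}}}(a)$.) The paper's proof supplies the missing mechanism through convexity: for fixed $x\in F_u^M$, the set $\mathcal{C}=\{L(f_t(a)^{[\frac12]},f_t(a)^{[\frac12]})(x)-f_t(a) : f\in\mathcal{D}\}$, with $\mathcal{D}$ the continuous functions $f:[0,1]\to[0,1]$ satisfying $f(0)=0$ and $f(1)=1$, is convex (this is seen after an isometric triple embedding making $a$ positive in a JBW$^*$-algebra) and contains $0$ in its strong$^*$ closure; since the strong$^*$ topology is compatible with the duality, its strong$^*$ closure equals its weak$^*$ closure, and by convexity $0$ lies in the \emph{norm} closure of $\mathcal{C}$ in $M$. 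One then picks $f$ with $\|L(f_t(a)^{[\frac12]},f_t(a)^{[\frac12]})(x)-f_t(a)\|<\frac{\varepsilon^2}{128}$, flattens $f$ to a nearby $g$ which is identically $1$ on $[1-\frac{1}{n_0},1]$ so that $e_n\leq c:=g_t(f_t(a))$ for all $n\geq n_0$, applies $P_2(e_n)$ and Peirce arithmetic to convert the estimate into $\|P_2(e_n)(x)-e_n\|<\frac{\varepsilon^2}{64}$, and only at that point invokes Lemma \ref{l quatitative 1.6} to get $\|P_1(e_n)(x)\|<\frac{\varepsilon}{2}$ and hence $\|\Theta_n(x)-x\|<\varepsilon$. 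Without some such norm-quantitative device, the passage from strong$^*$ to norm convergence in your sketch is unjustified.
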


\begin{proof} By the assumptions $u=u(a)$ is the support tripotent of $a$ in $M^{**}$. 
It is known that the JBW$^*$-triple, $W_{a}$, of $M$ generated by the element $a$ is isometrically JBW$^*$-triple isomorphic to a commutative von Neumann algebra $W$ admitting $a$ as a positive generator (cf. \cite[Lemma 3.11]{Horn87} and \cite{Ka83}).\smallskip

By the Borel functional calculus in $W\cong W_a$, we set $e_n = \chi_{(1-\frac1n,1]}(a)\in W_a$ ($n\in\mathbb{N})$. Clearly, $(e_n)_n$ is a decreasing sequence of tripotents in $W_a\subset M$.\smallskip

We fix $x\in F_u^M$. Let us insert some notation. The symbol $\mathcal{D}$ will stand for the set of all continuous functions $f:[0,1]\to [0,1]$ satisfying $f(0)=0$ and $f(1)=1$.  Let $\mathcal{C}$ be the subset of $M$ given by $$\mathcal{C} = \left\{ L(f_t(a)^{[\frac12]},f_t(a)^{[\frac12]})(x)-f_t(a) : f\in \mathcal{D} \right\}.$$

We claim that $\mathcal{C}$ is a convex set. To prove the claim let $r=r_{_{M^{**}}}(a)$ denote the range tripotent of $a$ in $M^{**}$, and let $\pi$ be a linear isometric triple homomorphism from $M^{**}$ into a JBW$^*$-algebra
$B$ such that $\pi(r)$ is a projection in $B$ and $\pi |_{M_2^{**}(r)} :M_2^{**}(r)\to  \pi(M^{**})_2(\pi(r))$ is a unital Jordan $^*$-monomorphism (cf. \cite[Lemma 3.9]{EdFerHosPe2010} or \cite[Lemma 2.3]{BuFerMarPe}). It is not hard to see that \begin{align}\label{eq first triple embedding for convexity} \pi \left( L(f_t(a)^{[\frac12]},f_t(a)^{[\frac12]})(x)-f_t(a) \right) \\
=& L(f_t(\pi(a))^{[\frac12]},f_t(\pi(a))^{[\frac12]})(\pi(x))-f_t(\pi(a)) \nonumber\\
=& L(f(\pi(a))^{\frac12},f(\pi(a))^{\frac12})(\pi(x))-f(\pi(a)) \nonumber\\
=& f(\pi(a))\circ \pi(x) -f(\pi(a))\nonumber
\end{align}  where $f(a)$ denotes the continuous functional calculus of the JBW$^*$-algebra $B$ at the element $\pi(a)$. The above observation implies that $$\pi (\mathcal{C}) =\left\{  f(\pi(a))\circ \pi(x) -f(\pi(a)) : f\in \mathcal{D} \right\},$$ and thus $\pi (\mathcal{C})$ (and hence $\mathcal{C}$) is a convex set because $\mathcal{D}$ is.\smallskip

It is well known that $u$ lies to the strong$^*$-closure in $M^{**}$ of the set $\mathcal{D}(a):=\{f_t(a) : f\in\mathcal{D}\}$. Thus, we can find a net $(a_\lambda)_{\lambda}$ in $\mathcal{D}(a)$ such that $(a_\lambda^{[\frac12]})_{\lambda}$ converges to $u$ in the strong$^*$ topology of $M^{**}$. Having in mind that the triple product of $M^{**}$ is jointly strong$^*$ continuous on bounded sets, it follows that $$L(a_\lambda^{[\frac12]},a_\lambda^{[\frac12]})(x)-a_\lambda \to L(u,u)(x)-u=0,$$ in the strong$^*$ topology of $M^{**}$. Therefore $0\in \overline{\mathcal{C}}^{\hbox{\tiny strong}^*}$. Since the strong$^*$-topology of $M^{**}$ is compatible with the duality $(M^{**},M^*)$, and $\mathcal{C}$ is a convex subset of $M$, the closure of $\mathcal{C}$ in the strong$^*$ topology coincides with its weak$^*$ closure in $M^{**}$ (compare \eqref{eq bipolar compatible}). Furthermore, $0\in \overline{\mathcal{C}}^{\hbox{\tiny strong}^*}= \overline{\mathcal{C}}^{ w^*}$ assures that $0$ belongs to the weak closure of $\mathcal{C}$ in $M$, and the latter with the norm closure of $\mathcal{C}$ in $M$. We can therefore conclude that \begin{equation}\label{eq 0 is the norm closure}\hbox{ $0$ lies in the norm closure of the set $\mathcal{C}$ in $M$.}
\end{equation}

Given an arbitrary $1>\varepsilon>0$, by \eqref{eq 0 is the norm closure}, we can find an element $d= f_t(a)$ with $f\in \mathcal{D}$ such that $\|L(d^{[\frac12]},d^{[\frac12]})(x)-d\|<\frac{\varepsilon^2}{128}$. Clearly, $d$ and $d^{[\frac12]}$ belong to the face $F_u^M$ (cf. \cite[Lemma 3.3]{EdFerHosPe2010}).\smallskip

Let us be more precise. By considering the JB$^*$-triple, $M_a$, of $M$ generated by $a$, and its identification with a commutative C$^*$-algebra of the form $C_0(\Omega_a)$, where $\Omega_a\subseteq [0,1]$, with $\Omega_a\cup\{0\}$ compact and correspond to the function $t\mapsto t$ for every $t\in \Omega_a$  (cf. \cite[Corollary 1.15]{Ka83} and \cite{FriRu85}). Having in mind that $d=f_t(a)$ with $f\in \mathcal{D}$, we can find $t_0\in  [0,1)$ such that $1-\frac{\varepsilon^2}{256} \leq f(t) \leq 1$ for all $t\in [t_0,1]$. There exits a natural $n_0$ satisfying $1-\frac{1}{n_0} > t_0$. Let $g:[0,1]\to [0,1]$ be the continuous function defined by
$$g(t)= \left\{%
\begin{array}{ll}
        f(t), &  0 \leq t \leq t_0, \\
        \hbox{affine} & t_0 \leq t\leq 1-\frac{1}{n_0},\\
    1, &  1-\frac{1}{n_0}\leq t\leq 1, \\
     \end{array}%
\right. $$ and define $c= g_t(d)\in M_a$. Clearly, $\|c-d\|\leq \frac{\varepsilon^2}{256}$ and $e_n\leq e_{n_0} \leq c \leq c^{[\frac12]}$ in $W_a = \overline{M_a}^{\sigma(M,M_*)}$ for all $n\geq n_0$. Having in mind the isometric triple embedding $\pi$, it follows, as in \eqref{eq first triple embedding for convexity}, that
$$\|L(c^{[\frac12]},c^{[\frac12]})(x)-c\| = \|\pi L(c^{[\frac12]},c^{[\frac12]})(x)-\pi (c)\| = \|\pi(c)\circ \pi(x) -\pi(c)\|$$
$$\leq \| (\pi(c)-\pi(d))\circ \pi(x) - (\pi(c)-\pi(d))\|+  \|\pi(d)\circ \pi(x) -\pi(d)\| $$ $$ \leq 2\|\pi(c)-\pi(d)\| + \|\pi(d)\circ \pi(x) -\pi(d)\|= 2 \|c-d\| + \|L(d^{[\frac12]},d^{[\frac12]})(x)-d\|<\frac{\varepsilon^2}{64}.$$

Since $e_n\leq c\leq  c^{[\frac12]}$ in $W_a $ for all $n\geq n_0$, we deduce that $c = e_n + P_0(e_n) (c)$ and $c^{[\frac12]} = e_n + P_0(e_n) (c^{[\frac12]})$ for all $n\geq n_0$. Therefore, by applying Peirce arithmetic, we have  $P_2(e_n) ( L(c^{[\frac12]},c^{[\frac12]})(x)-c) = P_2(e_n) (x) -e_n,$ which combined with the contractiveness of $P_2(e_n)$ gives $$\| P_2(e_n) (x) -e_n\|<\frac{\varepsilon^2}{64},\ \hbox{ for all } n\geq n_0.$$ Lemma \ref{l quatitative 1.6} assures that $$ \| P_1(e_n) (x)\| \leq 4 \sqrt{\| P_2(e_n) (x) -e_n\|} < 4 \sqrt{\frac{\varepsilon^2}{64}} < \frac{\varepsilon}{2}.$$

Finally we compute the distance between $\Theta_n(x)=e_n+P_0(e_n) (x)$ and $x$. In this case, for each $n\geq n_0$ we have $$\| \Theta_n(x)-x\|=\|e_n+P_0(e_n) (x) -x\| =\|e_n-P_2(e_n) (x) -P_1(e_n) (x)\| $$
$$\leq \|e_n-P_2(e_n) (x)\| + \|P_1(e_n) (x)\|< \frac{\varepsilon^2}{64}+ \frac{\varepsilon}{2} < \varepsilon,$$ for every $n\geq n_0$.
\end{proof}

In the next results we shall establish a version of the conclusions in \cite[Lemma 15]{MoriOza2018} in the setting of JBW$^*$-triples.

\begin{proposition}\label{p affinity on closed Gdelta faces} Let $M$ be a JBW$^*$-triple, let $Y$ be a Banach space, and let $\Delta : S(M)\to S(Y)$ be a surjective isometry. Then the restriction of $\Delta$ to each norm closed {\rm(}proper{\rm)} face of $\mathcal{B}_{M}$ associated with a compact-$G_\delta$ tripotent $u$ in $M^{**}$ is an affine function. Furthermore, for each $\psi\in Y^*$, there exist $\phi\in M^*$ and $\gamma\in \mathbb{R}$ such that $\|\phi\|,|\gamma|\leq  \|\psi\|,$ and $$\psi \Delta(x) = \Re\hbox{e}\, \phi(x) + \gamma,\hbox{ for all $x\in F_u^M$.}$$
\end{proposition}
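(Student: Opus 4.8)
The plan is to reduce both assertions to the two preceding results. Lemma~\ref{l norm convergence like approximae units} supplies a decreasing sequence $(e_n)_n$ of non-zero tripotents in $M$ for which $\Theta_n(x):=e_n+P_0(e_n)(x)\to x$ in norm for every $x\in F_u^M$, while Corollary~\ref{c closed faces associated with tripotents down} describes $\Delta$ on each face $F_{e_n}^M=e_n+\mathcal{B}_{M_0(e_n)}$. Three elementary remarks will be used repeatedly: each $\Theta_n$ is affine, since $P_0(e_n)$ is linear; since $P_0(e_n)$ is also contractive, $\Theta_n$ maps $\mathcal{B}_M$ into $e_n+\mathcal{B}_{M_0(e_n)}=F_{e_n}^M\subseteq S(M)$; and $\Delta$, being an isometry, is norm continuous.

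For the affinity statement, fix $x,y\in F_u^M$ and $t\in[0,1]$; then $tx+(1-t)y$ again lies in the convex set $F_u^M\subseteq S(M)$. Using that $\Theta_n$ is affine and that $\Delta|_{F_{e_n}^M}$ is affine by Corollary~\ref{c closed faces associated with tripotents down} (all points involved remaining inside $F_{e_n}^M$), we get
\begin{align*}
\Delta\big(\Theta_n(tx+(1-t)y)\big)&=\Delta\big(t\,\Theta_n(x)+(1-t)\,\Theta_n(y)\big)\\
&=t\,\Delta(\Theta_n(x))+(1-t)\,\Delta(\Theta_n(y)).
\end{align*}
Letting $n\to\infty$ and using $\Theta_n(z)\to z$ in norm together with the continuity of $\Delta$, the left-hand side tends to $\Delta(tx+(1-t)y)$ and the right-hand side to $t\,\Delta(x)+(1-t)\,\Delta(y)$; hence $\Delta|_{F_u^M}$ is affine.

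For the representation of $\psi\Delta$ we may assume $\psi$ is real valued (otherwise replace it by $\Re\hbox{e}\,\psi$), the case $\psi=0$ being trivial. By Corollary~\ref{c closed faces associated with tripotents down}, for each $n$ there is an affine isometry $T_{e_n}\colon M_0(e_n)\to Y$ with $\Delta(e_n+z)=T_{e_n}(z)$ for all $z\in\mathcal{B}_{M_0(e_n)}$; being affine and isometric, $T_{e_n}=\Delta(e_n)+L_n$ for a real-linear isometry $L_n\colon M_0(e_n)\to Y$. Then $\psi L_n$ is a real-linear functional on $M_0(e_n)$ of norm at most $\|\psi\|$, so we may write $\psi L_n=\Re\hbox{e}\,\rho_n$ with $\rho_n\in(M_0(e_n))^*$, $\|\rho_n\|\le\|\psi\|$, and extend $\rho_n$ by Hahn--Banach to $\phi_n\in M^*$ with $\|\phi_n\|\le\|\psi\|$. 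Setting $\gamma_n:=\psi\Delta(e_n)$ and $\widetilde\phi_n:=\phi_n\circ P_0(e_n)\in M^*$, and using that Peirce projections are contractive, we obtain $|\gamma_n|\le\|\psi\|$, $\|\widetilde\phi_n\|\le\|\psi\|$, and
$$\psi\Delta(\Theta_n(x))=\psi\Delta(e_n)+\psi L_n\big(P_0(e_n)(x)\big)=\gamma_n+\Re\hbox{e}\,\widetilde\phi_n(x)\qquad(x\in F_u^M).$$
Since $\Theta_n(x)\to x$ in norm and $\Delta$ is continuous, the left-hand side converges to $\psi\Delta(x)$ for each $x\in F_u^M$.

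It remains to pass to a limit. The sequence $(\gamma_n,\widetilde\phi_n)$ lies in the compact space $[-\|\psi\|,\|\psi\|]\times\|\psi\|\mathcal{B}_{M^*}$, with $M^*$ carrying the weak$^*$ topology, hence it admits a subnet $(\gamma_{n_\alpha},\widetilde\phi_{n_\alpha})\to(\gamma,\phi)$ with $|\gamma|\le\|\psi\|$ and $\|\phi\|\le\|\psi\|$. For each fixed $x\in F_u^M$, the net $\gamma_{n_\alpha}+\Re\hbox{e}\,\widetilde\phi_{n_\alpha}(x)$ converges to $\gamma+\Re\hbox{e}\,\phi(x)$ and, as a subnet of a convergent sequence, also to $\psi\Delta(x)$; therefore $\psi\Delta(x)=\Re\hbox{e}\,\phi(x)+\gamma$, as wanted. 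The statement is not hard once the machinery is in place; the points needing care are the verification that $\Theta_n$ really lands in $F_{e_n}^M$ and the fact that the weak$^*$ cluster-point argument must be run through subnets rather than subsequences, since $M^*$ need not be separable, while all the substantive content is carried by Lemma~\ref{l norm convergence like approximae units} and Corollary~\ref{c closed faces associated with tripotents down}.
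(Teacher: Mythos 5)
Your proposal is correct and follows essentially the same route as the paper: approximate points of $F_u^M$ by $\Theta_n(x)=e_n+P_0(e_n)(x)$ via Lemma~\ref{l norm convergence like approximae units}, use the affinity of $\Delta$ on each $F_{e_n}^M$ from Corollary~\ref{c closed faces associated with tripotents down}, and extract the limiting functional $(\phi,\gamma)$ by a weak$^*$ compactness argument. The only (harmless) deviations are cosmetic: you extend $\rho_n$ by Hahn--Banach before composing with $P_0(e_n)$ where the paper simply sets $\Re\hbox{e}\,\phi_n=\psi T_n P_0(e_n)$ directly, and you run the cluster-point step through subnets, which is in fact slightly more careful than the paper's phrasing.
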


\begin{proof} Let $a$ be a norm-one element in $M$, and let $u=u(a)$ be the support tripotent of $a$ in $M^{**}$. By Lemma \ref{l norm convergence like approximae units} there exists a sequence of non-zero tripotents $(e_n)_{n}$ in $M$ satisfying that for each $x\in F_u^M$ the sequence $\Theta_n (x):=e_n + P_0(e_n) (x)$ converges to $x$ in the norm topology of $M$. Clearly, $\Theta_n(y)\in F_{e_n}^M$ for all $n\in \mathbb{N}$, $y\in M$.\smallskip

Now, take $x,y\in F^{M}_{u(a)}$ and $t\in ]0,1[$. Since each $\Theta_n$ is an affine map and $\Delta|_{F^{M}_{e_n}}$ is also affine (see Corollary \ref{c closed faces associated with tripotents down}), we deduce that $$\Delta (\Theta_n(t x + (1-t) y )) = \Delta (t \Theta_n( x)  + (1-t) \Theta_n(y)) =  t \Delta (\Theta_n ( x) ) + (1-t) \Delta (\Theta_n(y )),$$ for every $n\in \mathbb{N}$. Taking limits in $n\to \infty$, it follows from Lemma \ref{l norm convergence like approximae units} and from the norm continuity of $\Delta$ that $\Delta (t  x  + (1-t) y) = t \Delta ( x ) + (1-t) \Delta (y),$ which proves that $\Delta|_{F_u^M}$ is affine.\smallskip

For the last assertion, let us fix $\psi\in Y^*$. By Corollary \ref{c closed faces associated with tripotents down}, for each natural $n$, we can find a linear isometry $T_n:M_0(e_n)\to Y$ and a norm-one element $y_n=\Delta(e_n)\in S(Y)$ such that $\Delta (w) = T_n (w-e_n) + y_n$, for all $w\in F_{e_n}^M$. Let us define $\Re\hbox{e}\,\phi_n = \psi T_n P_0(e_n)\in (M_{\mathbb{R}})^*$, with $\phi_n \in M^*$ and $\|\phi_n\|\leq \|\psi\|$. We can therefore write \begin{equation}\label{eq 1 2501} \psi \Delta (w) =  \Re\hbox{e}\,\phi_n (w) + \gamma_n,
\end{equation} for all $w\in F_{e_n}^M$, where $\gamma_n :=\psi(y_n)$ and $|\gamma_n|\leq \|\psi\|$. Find a subsequence $(\gamma_{\sigma(n)})_n$ converging to some $\gamma\in \mathbb{R}$ with $|\gamma|\leq \|\psi\|$. The sequence $(\phi_{\sigma(n)})_n$ is bounded in $\mathcal{B}_{M^*}$. Let $\phi\in M^*$ be a $\sigma(M^*,M)$-cluster point of $(\phi_{\sigma(n)})_n$ with $\|\phi\|\leq \|\psi\|$.\smallskip

Take now an element $x\in F_{u}^{M}$. We deduce from Lemma \ref{l norm convergence like approximae units} and the continuity of $\Delta$ that $\psi\Delta(\Theta_{\sigma(n)}(x))\to \psi\Delta (x)$ in $\mathbb{R}$. It follows from \eqref{eq 1 2501} that $$\psi \Delta \Theta_{\sigma(n)}(x) =  \Re\hbox{e}\,\phi_{\sigma(n)} \Theta_{\sigma(n)}(x) + \gamma_{\sigma(n)},$$ for all natural $n$. Since $\phi\in M^*$ is a $\sigma(M^*,M)$-cluster point of $(\phi_{\sigma(n)})_n$ and $\|\Theta_{\sigma(n)} (x)-x\|\to 0$, we conclude that $( \Re\hbox{e}\,\phi_{\sigma(n)} \Theta_{\sigma(n)}(x))_n \to \Re\hbox{e}\, \phi(x)$. By combining all these assertions we get  $\psi \Delta(x) = \Re\hbox{e}\, \phi(x) + \gamma.$
\end{proof}

By applying Proposition \ref{p affinity on closed Gdelta faces}, we can now deal with general proper norm closed faces in the closed unit ball of a JBW$^*$-triple.

\begin{proposition}\label{p affinity on closed faces} Let $M$ be a JBW$^*$-triple, let $Y$ be a Banach space, and let $\Delta : S(M)\to S(Y)$ be a surjective isometry. Then the restriction of $\Delta$ to each norm closed proper face $F$ of $\mathcal{B}_{M}$ is an affine function. Furthermore, for each $\psi\in Y^*$, there exist $\phi\in M^*$ and $\gamma\in \mathbb{R}$ such that $\|\phi\|,|\gamma|\leq \|\psi\|,$ and $$\psi \Delta(x) = \Re\hbox{e}\, \phi(x) + \gamma,\hbox{ for all $x\in F$.}$$
\end{proposition}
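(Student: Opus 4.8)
The plan is to reduce the case of a general proper norm closed face $F$ of $\mathcal{B}_M$ to the already-established case of faces associated with compact-$G_\delta$ tripotents, handled in Proposition~\ref{p affinity on closed Gdelta faces}. By Theorem~\ref{thm norm closed faces}, the face $F$ is of the form $F = F_u^M = (u + \mathcal{B}_{M_0^{**}(u)}) \cap M$ for a unique non-zero compact tripotent $u$ in $M^{**}$. By the definition of compactness, there is a decreasing net $(u_\lambda)_{\lambda \in \Lambda}$ of compact-$G_\delta$ tripotents in $M^{**}$ with infimum $u$, converging to $u$ in the weak$^*$ topology. Each $u_\lambda$ being compact-$G_\delta$, Proposition~\ref{p affinity on closed Gdelta faces} applies to the face $F_{u_\lambda}^M$: the restriction $\Delta|_{F_{u_\lambda}^M}$ is affine, and moreover for each $\psi \in Y^*$ there exist $\phi_\lambda \in M^*$, $\gamma_\lambda \in \mathbb{R}$ with $\|\phi_\lambda\|, |\gamma_\lambda| \le \|\psi\|$ such that $\psi\Delta(x) = \Re\mathrm{e}\,\phi_\lambda(x) + \gamma_\lambda$ for all $x \in F_{u_\lambda}^M$.

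The key structural input is Proposition~\ref{p norm closure of the union}, which gives $F_u^M = \overline{\bigcup_{\lambda} F_{u_\lambda}^M}^{\|\cdot\|}$; since $u \le u_\lambda$ we also have $F_{u_\lambda}^M \subseteq F_u^M$ for every $\lambda$. First I would verify that $\Delta|_{F_u^M}$ is affine: given $x, y \in F_u^M$ and $t \in [0,1]$, by Proposition~\ref{p norm closure of the union} we can approximate $x$ and $y$ in norm by elements $x_\lambda, y_\lambda$ of $\bigcup_\mu F_{u_\mu}^M$; using that $\Lambda$ is directed and the net is decreasing (so $\bigcup_\mu F_{u_\mu}^M$ is an increasing union of faces, hence any finite collection of its elements lies in a single $F_{u_\mu}^M$), we may assume $x_\lambda, y_\lambda$ lie in a common face $F_{u_{\mu(\lambda)}}^M$. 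Then $t x_\lambda + (1-t) y_\lambda \in F_{u_{\mu(\lambda)}}^M$ and affinity of $\Delta$ on that face gives $\Delta(t x_\lambda + (1-t) y_\lambda) = t\Delta(x_\lambda) + (1-t)\Delta(y_\lambda)$; passing to the limit via norm continuity of $\Delta$ yields the conclusion on $F_u^M$.

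For the second assertion I would fix $\psi \in Y^*$ and extract the functionals $\phi_\lambda, \gamma_\lambda$ from Proposition~\ref{p affinity on closed Gdelta faces} for each compact-$G_\delta$ face $F_{u_\lambda}^M$. Since $|\gamma_\lambda| \le \|\psi\|$ and $(\phi_\lambda)$ lies in the ball of radius $\|\psi\|$ in $M^*$, I pass to a subnet so that $\gamma_\lambda \to \gamma$ in $\mathbb{R}$ with $|\gamma| \le \|\psi\|$ and $\phi_\lambda \to \phi$ in the weak$^*$ topology $\sigma(M^*, M)$ with $\|\phi\| \le \|\psi\|$. The identity $\psi\Delta(x) = \Re\mathrm{e}\,\phi_\lambda(x) + \gamma_\lambda$ holds for every $x$ in the union $\bigcup_\mu F_{u_\mu}^M$ once $u_\lambda \le u_{\mu}$ (again using the increasing union structure), so for each fixed such $x$, taking the limit along the subnet gives $\psi\Delta(x) = \Re\mathrm{e}\,\phi(x) + \gamma$. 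This establishes the formula on the dense subset $\bigcup_\mu F_{u_\mu}^M$ of $F_u^M$, and by norm continuity of $\Delta$ and of $\Re\mathrm{e}\,\phi$ it extends to all of $F = F_u^M$. The main obstacle is the careful bookkeeping of subnets: one must ensure that a single subnet simultaneously makes $(\gamma_\lambda)$ converge, makes $(\phi_\lambda)$ weak$^*$ converge, and is cofinal enough that for every element $x$ of the union $\bigcup_\mu F_{u_\mu}^M$ the index $\lambda$ eventually dominates the $\mu$ for which $x \in F_{u_\mu}^M$ — this last point uses precisely that the $u_\lambda$ are decreasing and $\Lambda$ is directed, so the faces form a genuinely increasing union.
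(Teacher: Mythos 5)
Your proposal is correct and follows essentially the same route as the paper: reduce to compact-$G_\delta$ faces via Theorem \ref{thm norm closed faces} and a decreasing net of compact-$G_\delta$ tripotents, invoke Proposition \ref{p norm closure of the union} for the norm density of the increasing union of faces, apply Proposition \ref{p affinity on closed Gdelta faces} on each, and pass to weak$^*$-convergent subnets of $(\phi_\lambda)$ and $(\gamma_\lambda)$. The only cosmetic difference is in the last step, where the paper runs a single three-term $\varepsilon$-estimate at one index $\mu_0$ while you first establish the identity exactly on the dense union and then extend by continuity; both are valid.
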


\begin{proof} Let $F$ be a proper norm closed face of $\mathcal{B}_M$. We know from Theorem \ref{thm norm closed faces} that $F= F^{M}_{u},$ where $u$ is a compact tripotent in $M^{**}.$ Then there exists a net $(u_{\lambda})_{\lambda\in \Lambda}$ of compact-$G_\delta$ tripotents in $M^{**}$ decreasing in the weak$^*$ topology of $M^{**}$ to $u$ (cf. \cite{EdRu96}). 
For each $\lambda\in \Lambda$ we write $F^{M}_{u_\lambda} = \left(u_\lambda+ \mathcal{B}_{M^{**}_0(u_\lambda)}\right)\cap M$ for the proper norm closed face associated with $u_\lambda$.\smallskip

Proposition \ref{p norm closure of the union} assures that $\displaystyle F= F^M_u =\overline{\bigcup_{\lambda\in\Lambda} F^M_{u_\lambda} }^{\|.\|}.$ For each $\lambda\in \Lambda$, $u_\lambda$ is a compact-$G_\delta$ tripotent in $M$, and thus Proposition \ref{p affinity on closed Gdelta faces} implies that the restriction of $\Delta$ to the face $F^M_{u_\lambda} = \left(u_\lambda+ \mathcal{B}_{M^{**}_0(u_\lambda)}\right)\cap M$ is an affine function. \smallskip

Now, taking $\displaystyle x,y\in \bigcup_{\lambda\in\Lambda} F^M_{u_\lambda}$ and $t\in ]0,1[$, we can find $\lambda_0\in \Lambda$ such that $x,y\in F^M_{u_{\lambda_0}}$. By applying that $\Delta|_{F^M_{u_{\lambda_0}}}$ is an affine mapping, we deduce that $$ \Delta (t x + (1-t) y ) =   t \Delta (x ) + (1-t) \Delta (y).$$ This proves that $\displaystyle \Delta|_{\bigcup_{\lambda\in\Lambda} F^M_{u_\lambda}}$ is affine. The norm-density of $\bigcup_{\lambda\in\Lambda} F^M_{u_\lambda}$ in $F$ and the continuity of $\Delta$ can be now applied to deduce that $\Delta|_{F}$ is affine.\smallskip

Let us prove the final statement. For this purpose we fix $\psi\in Y^*\backslash\{0\}$. For each $\lambda \in \Lambda$, Proposition \ref{p affinity on closed Gdelta faces} implies the existence of a functional $\phi_{\lambda}\in M^*$ and $\gamma_{\lambda}\in \mathbb{R}$ such that  $\|\phi_{\lambda}\|,|\gamma_{\lambda}|\leq \|\psi\|,$ and \begin{equation}\label{eq 2701 lambda} \psi \Delta(x) = \Re\hbox{e}\, \phi_{\lambda}(x) + \gamma_{\lambda},\hbox{ for all $x\in F_{u_{\lambda}}^M$, $\lambda\in \Lambda$.}
 \end{equation}By the weak$^*$ compactness of $\mathcal{B}_{M^*}$ we can find common subnets $(\phi_{\mu})$ and $(\gamma_{\mu})$ converging to $\phi\in M^*$ and $\gamma\in \mathbb{R}$, respectively. Clearly $\|\phi\|\leq \|\psi\|$ and $|\gamma|\leq \|\psi\|$. We claim that $$\psi \Delta(x) = \Re\hbox{e}\, \phi(x) + \gamma,\hbox{ for all $x\in F$.}$$ Namely, for each $\varepsilon>0$, we can find $\mu_0$ and $x_{\mu_0}\in F_{u_{\mu_0}}^M$ such that $\|x-x_{\mu_0}\|<\frac{\varepsilon}{6 \|\psi\|}$, $|\gamma_{\mu_0}-\gamma|<\frac{\varepsilon}{3}$ and $|\phi(x)-\phi_{\mu_0} (x)|<\frac{\varepsilon}{3}$. We therefore conclude from \eqref{eq 2701 lambda} that $$ \left| \psi \Delta(x) - \Re\hbox{e}\, \phi(x) - \gamma \right|\leq \left| \psi \Delta(x) -\psi \Delta(x_{\mu_0}) \right| + \left| \psi \Delta(x_{\mu_0}) - \Re\hbox{e}\, \phi_{\mu_0}(x_{\mu_0}) - \gamma_{\mu_0} \right|$$
$$+ \left| \Re\hbox{e}\, \phi_{\mu_0}(x_{\mu_0}) - \Re\hbox{e}\, \phi_{\mu_0}(x) \right| + \left| \Re\hbox{e}\, \phi_{\mu_0}(x) - \Re\hbox{e}\, \phi(x) \right| +|\gamma_{\mu_0}-\gamma|$$
$$\leq \left(\|\psi\|+\|\phi_{\mu_0}\|\right) \, \|x_{\mu_0}-x\| + 2\frac{\varepsilon}{3} < \varepsilon.$$ The desired statement follows from the arbitrariness of $\varepsilon$.
\end{proof}

We can now mimic the ideas in \cite{Di:C}, \cite{Liu2007}, \cite[Lemma 2.1]{JVMorPeRa2017}, and \cite[Lemma 2.1]{CuePer} to prove the existence of support functionals for faces.

\begin{lemma}\label{l existence of support functionals for the image of a face} Let $E$ be a JB$^*$-triple and let $Y$ be a real Banach space. Suppose $\Delta : S(E)\to S(Y)$ is a surjective isometry. Then for each maximal proper norm closed face $F$ of the closed unit ball of $E$ the set $$\hbox{supp}_{\Delta}(F) := \{\psi\in Y^* : \|\psi\|=1,\hbox{ and } \psi^{-1} (\{1\})\cap \mathcal{B}_{Y} = \Delta(F) \}$$ is a non-empty weak$^*$ closed face of $\mathcal{B}_{Y^*}$; in other words, for each minimal tripotent $e$ in $E^{**}$ the set $$ \hbox{supp}_{\Delta}(F_e^{E}) := \{\psi\in Y^* : \|\psi\|=1,\hbox{ and } \psi^{-1} (\{1\})\cap \mathcal{B}_{Y} = \Delta(F_e^E) \}$$ is a non-empty weak$^*$ closed face of $\mathcal{B}_{Y^*}.$
\end{lemma}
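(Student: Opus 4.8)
The plan is to combine the description of the maximal faces of $\mathcal{B}_E$ recalled in \eqref{eq maximal proper faces} with the ``intersection face'' transfer principle of Mori and Ozawa already exploited in Corollary \ref{c closed faces associated with tripotents down}, and then to manufacture the support functional by a finite intersection property argument in $\mathcal{B}_{Y^{*}}$. By \eqref{eq maximal proper faces} and \cite[Corollary 3.5]{BuFerMarPe} the maximal proper norm closed faces of $\mathcal{B}_E$ are precisely the sets $F_e^{E}$ with $e$ a minimal tripotent in $E^{**}$, so the two formulations in the statement agree and it suffices to treat an arbitrary maximal proper norm closed face $F$ of $\mathcal{B}_E$. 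Let $e$ be the associated minimal tripotent in $E^{**}$ and let $\varphi_e\in E^{*}$ be the pure atom attached to $e$ as in \eqref{eq pure atoms and minimal partial isometries}. Since every $x\in F=F_e^{E}$ satisfies $P_2(e)(x)=e$ in $E^{**}$ and $\varphi_e(x)\,e=P_2(e)(x)$ for all $x\in E^{**}$, we get $\varphi_e(x)=1$ for all $x\in F$; consequently $\|x-z\|=2$ whenever $x\in F$ and $z\in -F$.

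\emph{Step 1: $\Delta(F)$ is a maximal proper norm closed face of $\mathcal{B}_Y$.} Being maximal, $F$ is in particular an intersection face in the sense of \cite{MoriOza2018}, hence by \cite[Lemma 8]{MoriOza2018} so is $\Delta(F)$; thus $\Delta(F)$ is a non-empty, norm closed, convex, proper face of $\mathcal{B}_Y$ contained in $S(Y)$. By Zorn's lemma $\Delta(F)$ lies inside some maximal proper norm closed face $G$ of $\mathcal{B}_Y$. If this inclusion were proper, then, applying \cite[Lemma 8]{MoriOza2018} to the surjective isometry $\Delta^{-1}:S(Y)\to S(E)$ and to the maximal (hence intersection) face $G$, the set $\Delta^{-1}(G)$ would be a proper norm closed face of $\mathcal{B}_E$ strictly containing $F$, contradicting the maximality of $F$. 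Hence $\Delta(F)=G$ is maximal.

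\emph{Step 2: existence of a support functional.} The face $-F$ is non-empty, so we may fix $z_0\in -F$ and set $g'_0:=\Delta(z_0)$. For each $g\in\Delta(F)$ we have $g=\Delta(x)$ with $x\in F$, so $\|g-g'_0\|=\|x-z_0\|=2$ by the first paragraph and the fact that $\Delta$ is an isometry; Hahn--Banach (here $Y$ is a real space) then furnishes $\theta\in S(Y^{*})$ with $\theta(g-g'_0)=2$, which forces $\theta(g)=1$ and $\theta(g'_0)=-1$. Since $\Delta(F)$ is convex (Step 1), the barycentre $\bar g:=\frac{1}{n}\sum_{j=1}^{n}g_j$ of any finite family $g_1,\dots,g_n\in\Delta(F)$ again lies in $\Delta(F)$, and a functional $\theta$ as above for the pair $(\bar g,g'_0)$ satisfies $\theta(g_j)=1$ for every $j$. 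Therefore the weak$^{*}$ closed sets $\Lambda_g:=\{\psi\in\mathcal{B}_{Y^{*}}:\psi(g)=1\}$, $g\in\Delta(F)$, have the finite intersection property, and by weak$^{*}$ compactness of $\mathcal{B}_{Y^{*}}$ there is $\psi\in\bigcap_{g\in\Delta(F)}\Lambda_g$. Evaluating $\psi$ at a norm-one point of $\Delta(F)$ gives $\|\psi\|=1$, and $\Delta(F)\subseteq\psi^{-1}(\{1\})\cap\mathcal{B}_Y$; the latter is a proper norm closed face of $\mathcal{B}_Y$ containing the maximal face $\Delta(F)$, hence it equals $\Delta(F)$. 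Thus $\psi\in\hbox{supp}_{\Delta}(F)$, and this set is non-empty.

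\emph{Step 3: conclusion.} I claim $\hbox{supp}_{\Delta}(F)$ coincides with the weak$^{*}$ closed face $(\Delta(F))^{\prime}=\{\psi\in\mathcal{B}_{Y^{*}}:\psi|_{\Delta(F)}\equiv 1\}$ of $\mathcal{B}_{Y^{*}}$ (cf. \eqref{1.1}). Indeed, any $\psi\in(\Delta(F))^{\prime}$ has $\|\psi\|=1$ (evaluate at a norm-one point of $\Delta(F)$), and $\psi^{-1}(\{1\})\cap\mathcal{B}_Y$ is a proper norm closed face containing the maximal face $\Delta(F)$, hence equal to it, so $\psi\in\hbox{supp}_{\Delta}(F)$; the reverse inclusion is immediate from the definitions. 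So $\hbox{supp}_{\Delta}(F)=(\Delta(F))^{\prime}$, which is a weak$^{*}$ closed face of $\mathcal{B}_{Y^{*}}$, non-empty by Step 2. The main obstacle is Step 1: since $\Delta$ is not assumed affine, the only way I see to transport maximality of a face across $\Delta$ is the Mori--Ozawa intersection-face lemma; once $F$ is known to be carried to a maximal proper norm closed face and $-F$ is non-empty, Steps 2 and 3 are routine Hahn--Banach and weak$^{*}$ compactness arguments.
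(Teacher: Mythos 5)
Your argument is correct and reaches the same two milestones as the paper's proof --- that $\Delta(F)$ is again a maximal proper norm closed face (equivalently, a maximal convex subset of $S(Y)$), and that such a set is exposed by a norm-one functional --- but it implements both steps differently. The paper gets the first step in one stroke from \cite[Lemma 5.1$(ii)$]{ChenDong2011} (see also \cite[Lemma 3.5]{Tan2014}), which says directly that a surjective isometry between unit spheres carries maximal convex subsets of the sphere to maximal convex subsets, and then produces the support functional by Eidelheit's separation theorem \cite[Theorem 2.2.26]{Megg98}; that route is purely geometric and uses nothing about $F$ beyond its maximality. You instead route the first step through \cite[Lemma 8]{MoriOza2018} (itself built on the Cheng--Dong/Tanaka result) and replace Eidelheit by a finite-intersection-property argument in $\mathcal{B}_{Y^*}$, which is valid but needs the extra input $\|x-z\|=2$ for $x\in F$, $z\in -F$; you extract this from the pure atom $\varphi_e$, so your second step does invoke the JB$^*$-triple structure where the paper's does not. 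Your Step 3, identifying $\hbox{supp}_{\Delta}(F)$ with $\left(\Delta(F)\right)^{\prime}$, supplies the ``weak$^*$ closed face'' clause of the statement explicitly, which the paper leaves implicit. One small repair: the appeal to Zorn's lemma to embed $\Delta(F)$ in a maximal proper norm closed face of $\mathcal{B}_Y$ is shaky in a general Banach space (the union of a chain of proper norm closed faces need not be a closed face), but it is also unnecessary --- an intersection face is by definition an intersection of maximal proper norm closed faces, so $\Delta(F)$ sits inside one of them outright, and your $\Delta^{-1}$ argument then yields the maximality claim.
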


\begin{proof} By applying \cite[Lemma 5.1$(ii)$]{ChenDong2011} (see also \cite[Lemma 3.5]{Tan2014}) we deduce that the set $\Delta(F)$ is a maximal convex subset of $Y$. It follows from  Eidelheit's separation theorem \cite[Theorem 2.2.26]{Megg98} that there exists a norm-one functional $\varphi\in Y^*$ such that $\varphi^{-1} (\{1\})\cap \mathcal{B}_{Y} = \Delta(F)$ (compare the proof of \cite[Lemma 3.3]{Tan2016} or \cite[Lemma 2.1]{CuePer}).
\end{proof}

For the sake of brevity and conciseness, we introduce the following notation.

\begin{definition}\label{def property P} Let $E$ be a JB$^*$-triple. We shall say that $E$ satisfies property {\rm(}$\mathcal{P}${\rm)} if for each minimal tripotent $e$ in $E^{**}$ and each complete tripotent $u$ in $E$ {\rm(}that is $u\in \partial_e(\mathcal{B}_{E})${\rm)}, there exists another minimal tripotent $w$ in $E^{**}$ satisfying $w\perp e$ and $u = w + P_0(w) (u)$.
\end{definition}

Another tool needed in the proof of our main result is established in the next result.

\begin{proposition}\label{p technical before theorem} Let $M$ be a JBW$^*$-triple satisfying property {\rm(}$\mathcal{P}${\rm)}. Let $\varphi_e\in\partial_{e}(\mathcal{B}_{M^*})$ denote the unique pure atom associated with a minimal tripotent $e$ in $M^{**}$. Suppose $\Delta : S(M) \to S(Y)$ is a surjective isometry from the unit sphere of $M$ onto the unit sphere of a real Banach space $Y$. Then for each $\psi$ in $\hbox{supp}_{\Delta}(F_e^{M})$ we have $\psi \Delta (u) = \Re\hbox{e}\varphi_e(u)$ for every non-zero tripotent $u$ in $\partial_e(\mathcal{B}_{M})$.
\end{proposition}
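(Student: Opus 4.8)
The plan is to treat separately the cases $u\in F_e^M$ and $u\notin F_e^M$. If $u\in F_e^M$, then $u=e+P_0(e)(u)$ in $M^{**}$, hence $P_2(e)(u)=e$, and so $\varphi_e(u)\,e=P_2(e)(u)=e$ gives $\varphi_e(u)=1$; since also $\Delta(u)\in\Delta(F_e^M)=\psi^{-1}(\{1\})\cap\mathcal B_Y$ we get $\psi\Delta(u)=1=\Re\hbox{e}\,\varphi_e(u)$, and there is nothing else to do. So from now on assume $u\notin F_e^M$. First I would apply property $(\mathcal P)$ to the minimal tripotent $e$ and the complete tripotent $u$ to obtain a minimal tripotent $w\in M^{**}$ with $w\perp e$ and $u=w+P_0(w)(u)$; thus $u\in F_w^M:=\bigl(w+\mathcal B_{M^{**}_0(w)}\bigr)\cap M$, a non-empty norm closed proper face of $\mathcal B_M$. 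Since $e$ is minimal, $F_e^M$ is a non-empty norm closed face, so $e$ — and likewise $w$, and $-e$ (the latter because $F_{-e}^M=-F_e^M\neq\emptyset$) — are compact tripotents in $M^{**}$; as orthogonal sums of compact tripotents are compact, $e+w$ and $-e+w$ are non-zero compact tripotents, and by Theorem \ref{t characterization relatively open weak* closed faces} the non-empty norm closed faces $F_{e+w}^M:=F_e^M\cap F_w^M$ and $F_{-e+w}^M:=F_{-e}^M\cap F_w^M$ are weak$^*$ dense in $F_{e+w}^{M^{**}}=(e+w)+\mathcal B_{M^{**}_0(e+w)}$ and in $F_{-e+w}^{M^{**}}$, respectively.

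The main part of the argument applies Proposition \ref{p affinity on closed faces} to the face $F_w^M$ and the functional $\psi$: this produces $\phi\in M^*$ and $\gamma\in\RR$ with $\|\phi\|\le1$, $|\gamma|\le1$ and $\psi\Delta(x)=\Re\hbox{e}\,\phi(x)+\gamma$ for all $x\in F_w^M$ (in particular for $x=u$); moreover $\phi$, as an element of $M^*=(M^{**})_*$, is weak$^*$ continuous on $M^{**}$. On $F_{e+w}^M\subseteq F_e^M$ one has $\psi\Delta\equiv1$, so $\Re\hbox{e}\,\phi+\gamma\equiv1$ on $F_{e+w}^M$; by the weak$^*$ density above this holds throughout $F_{e+w}^{M^{**}}$, and writing a generic point as $(e+w)+z$ with $z\in\mathcal B_{M^{**}_0(e+w)}$ and using that $\Re\hbox{e}\,\phi$ is linear on $M^{**}_0(e+w)$, we obtain $\phi|_{M^{**}_0(e+w)}=0$ and $\Re\hbox{e}\,\phi(e+w)+\gamma=1$. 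Next I would bring in the tripotent $-e$ together with the (well known) oddness $\Delta(-x)=-\Delta(x)$ of a surjective isometry between unit spheres: since $F_{-e}^M=-F_e^M$ we have $\psi\Delta\equiv-1$ on $F_{-e}^M$, hence on $F_{-e+w}^M$, and the same reasoning yields $\Re\hbox{e}\,\phi(-e+w)+\gamma=-1$. Subtracting the two identities gives $\Re\hbox{e}\,\phi(e)=1$, so $\phi(e)=1=\|\phi\|$ (viewing $\phi$ as a norm-one functional on $M^{**}$); by \eqref{eq restriction functionals to Peirce 2} applied in $M^{**}$ we conclude $\phi=\phi\,P_2(e)$. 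In particular $\phi(w)=\phi(P_2(e)(w))=0$ because $w\perp e$, whence $\gamma=0$, and therefore, since $u\in F_w^M$,
\[
\psi\Delta(u)=\Re\hbox{e}\,\phi(u)+\gamma=\Re\hbox{e}\,\phi\bigl(P_2(e)(u)\bigr)=\Re\hbox{e}\bigl(\varphi_e(u)\,\phi(e)\bigr)=\Re\hbox{e}\,\varphi_e(u).
\]

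The step I expect to be the real obstacle is the one yielding $\phi(e)=1$, i.e. that $\phi$ attains its norm on the one-dimensional Peirce-two space $\CC e$: this is exactly where property $(\mathcal P)$ enters in an essential way (it supplies the tripotent $w$, so that $F_w^M$ has non-trivial common faces with both $F_e^M$ and $F_{-e}^M$) and where the oddness of $\Delta$ is needed. When the atomic part of $M^{**}$ is a rank-two Cartan factor such a $w$ need not exist, in agreement with the exceptions appearing in the statements of the main results.
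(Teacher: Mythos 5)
Your proof is correct and shares the paper's skeleton --- property {\rm(}$\mathcal{P}${\rm)} supplies the minimal tripotent $w\perp e$ with $u\in F_w^M$, and Proposition \ref{p affinity on closed faces} gives the affine representation $\psi\Delta=\Re\hbox{e}\,\phi+\gamma$ on $F_w^M$ --- but you execute the identification of $\phi$ and $\gamma$ by a genuinely different route. The paper stays inside $M$: it invokes the non-commutative Urysohn lemma \cite[Proposition 3.7]{FerPe10b} to produce orthogonal elements $a_0\in F_e^M$ and $b_0\in F_w^M$, evaluates $\psi\Delta$ at $\pm a_0+b_0$, and then runs an Akemann--Pedersen-type net argument combined with Lemma \ref{l the role of an abstract approximate unit} to conclude $\varphi=\varphi_e$ and $\lambda_w=0$. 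You instead pass to the bidual: you evaluate the weak$^*$-continuous affine identity at the tripotents $\pm e+w$ themselves, using Theorem \ref{t characterization relatively open weak* closed faces} to transport the relations $\psi\Delta\equiv\pm1$ on $(\pm F_e^M)\cap F_w^M$ up to the weak$^*$ closed faces $F_{\pm e+w}^{M^{**}}$, subtract to get $\phi(e)=1=\|\phi\|$, and finish with \eqref{eq restriction functionals to Peirce 2} and the minimality of $e$. This is arguably cleaner and puts Theorem \ref{t characterization relatively open weak* closed faces} to direct use, which the paper's own proof of this proposition does not. The one step you must not leave as a bare assertion is that $e+w$ and $-e+w$ are compact, i.e.\ that orthogonal sums of compact tripotents are compact: this is true and is established in \cite{FerPe10b}, but its proof rests on precisely the Urysohn lemma that the paper invokes explicitly, and without it you cannot even claim that $F_e^M\cap F_w^M$ is non-empty, which is the crux of the argument; so the two proofs ultimately share the same external dependency, yours just hides it inside that citation. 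Your appeal to the oddness $\Delta(-x)=-\Delta(x)$ is legitimate (it is Tingley's original observation, and the paper uses it elsewhere via \cite[Lemma 8]{MoriOza2018}), and the initial case split on whether $u\in F_e^M$ is harmless but unnecessary, since property {\rm(}$\mathcal{P}${\rm)} produces $w$ in all cases.
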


\begin{proof}  Let us fix a minimal tripotent $e$ in $M^{**}$, $u\in \partial_e(\mathcal{B}_{M}),$ and $\psi$ in $\hbox{supp}_{\Delta}(F_e^{M})$. By the hypotheses on $M$ we can find another minimal tripotent $w$ in $M^{**}$ satisfying $w\perp e$ and $u = w + P_0(w) (u)$. Proposition \ref{p affinity on closed faces} implies the existence of $\lambda_w\in \mathbb{R}$ and $\varphi\in M^*$ such that $\|\varphi\|\leq 1$ and $\psi \Delta (x) = \lambda_w+ \Re\hbox{e} \varphi (x)$ for every $x\in F_w^M$.\smallskip

Since minimal tripotents in $M^{**}$ are compact, we are in a position to apply the non-commutative generalisation of Urysohn's lemma established in \cite[Proposition 3.7]{FerPe10b}. By this result, we can find orthogonal norm-one elements $a_0,b_0\in M$ such that $a_0 = e+P_0(e) (a_0)$ and $b_0 = w+P_0(w) (b_0),$ that is, $a_0\in F_e^{M}$ and $b_0\in F_w^{M}$. Since, by orthogonality, $\pm a_0 + b_0\in \left(\pm F_e^{M}\right)\cap F_w^{M},$ we deduce from Lemma \ref{l existence of support functionals for the image of a face} and \cite[Lemma 8]{MoriOza2018} that $$ \pm 1 = \psi \Delta (\pm a_0 + b_0) = \lambda_w\pm \Re\hbox{e} \varphi (a_0) + \Re\hbox{e} \varphi (b_0) = \pm \Re\hbox{e} \varphi (a_0) + \psi \Delta ( b_0),$$ which implies that $\psi \Delta ( b_0)= 0$ and $\Re\hbox{e} \varphi (a_0)=1$. In the above argument, $a_0$ can be arbitrarily replaced with any element $c$ in the face $F_e^{M}$ for which there exists $b_0\in F_w^{M}$ with $c\perp b_0$. Arguing as in the proof of \cite[Lemma 2.7]{AkPed92} we can find a net $(a_{\lambda})$ in $M_2(r_{M}(a_0))\subseteq M$ such that $a_\lambda = e+P_0(e) (a_\lambda)$ (equivalently $a_\lambda\in F_e^{M}$), and $(a_{\lambda})\to e$ in the weak$^*$ topology of $M^{**}$. Since $a_{\lambda}\perp b_0$ for every $\lambda$, it follows from the above arguments that $\Re\hbox{e} \varphi (a_\lambda)=1=\|\varphi\|$ for all $\lambda$. Lemma \ref{l the role of an abstract approximate unit} assures that $\varphi=\varphi_e$.\smallskip

We have therefore shown that $\psi \Delta (x) = \lambda_w+ \Re\hbox{e} \varphi_e (x)$ for every $x\in F_w^M$. Since $b_0\in F_w^M$ and $b_0\perp e$, we get $0= \psi \Delta (b_0) = \lambda_w+ \Re\hbox{e} \varphi_e (b_0),$ which implies that $\lambda_w=0,$ and $\psi \Delta (u) = \Re\hbox{e} \varphi_e (u)$ as desired.
\end{proof}

Let us recall another result proved by M. Mori and N. Ozawa in \cite[Lemma 18]{MoriOza2018}. Let $A$ be a unital C$^*$-algebra, and let $p$ be a minimal projection in $A^{**}$. Then for each $a\in F_p^A = \left(p + A_0^{**} (p) \right)\cap \mathcal{B}_{A}$ and each $\varepsilon >0$ there exist unitary elements $u_1,\ldots, u_m$ in $F_p^A$ and $t_1,\dots, t_m\in [0,1]$ satisfying  $\displaystyle \sum_{j=1}^m t_j =1$ and $\displaystyle \left\|a - \sum_{j=1}^m t_j u_j\right\|< \varepsilon.$ We shall establish a version of these results in the setting of JBW$^*$-triples.

\begin{lemma}\label{l MO L18 for JBWtriple} Let $e$ be a non-zero compact tripotent in the second dual of a JBW$^*$-triple $M$. Let $a$ be an element in the norm closed face $F_e^M$. Then $a$ can be written as the average of two extreme points of $\mathcal{B}_M$ belonging to the face $F_e^M$.
\end{lemma}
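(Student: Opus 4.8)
The plan is to combine A.A. Siddiqui's averaging theorem \cite[Theorem 5]{Sidd2007} with the description of the norm closed faces of $\mathcal{B}_{M}$ in Theorem \ref{thm norm closed faces}. The underlying principle is elementary: a norm-one functional which attains its norm at the midpoint of two points of the closed unit ball must attain its norm at both of them, since the closed unit disc of $\mathbb{C}$ is strictly convex.

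First I would invoke \cite[Theorem 5]{Sidd2007} together with the identification \eqref{eq extreme points and complete tripotents} of the extreme points of $\mathcal{B}_{M}$ with the complete tripotents of $M$, to write $a = \frac12 (u_1 + u_2)$ for suitable extreme points $u_1, u_2$ of $\mathcal{B}_{M}$. It then only remains to check that $u_1$ and $u_2$ lie in the face $F_e^M$.

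For this, I would use Theorem \ref{thm norm closed faces} applied to the JB$^*$-triple $M$ and the compact tripotent $e\in M^{**}$, which gives $F_e^M = \left(\{e\}_{\prime}\right)_{\prime}$, where $\{e\}_{\prime} = \{\varphi\in \mathcal{B}_{M^*} : \varphi(e) = 1\}$ is non-empty (by Hahn--Banach, since $\|e\|=1$). Since $a\in F_e^M$, every $\varphi\in \{e\}_{\prime}$ satisfies $\varphi(a) = 1$, and hence $1 = \varphi(a) = \frac12\left(\varphi(u_1) + \varphi(u_2)\right)$ with $|\varphi(u_j)|\leq 1$ for $j=1,2$; taking real parts forces $\Re\hbox{e}\,\varphi(u_1) = \Re\hbox{e}\,\varphi(u_2) = 1$, and therefore $\varphi(u_1) = \varphi(u_2) = 1$. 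As $\varphi\in\{e\}_{\prime}$ was arbitrary, we conclude that $u_1, u_2\in \left(\{e\}_{\prime}\right)_{\prime} = F_e^M$, which completes the argument. There is no real obstacle here; the only step that genuinely matters is the identification of $F_e^M$ with the norm-semi-exposed face $\left(\{e\}_{\prime}\right)_{\prime}$, i.e. the fact (Theorem \ref{thm norm closed faces}) that membership in a norm closed face of $\mathcal{B}_{M}$ is detected by a family of norm-attaining functionals.
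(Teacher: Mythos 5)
Your proposal is correct and is essentially the paper's own argument: both decompose $a=\frac12(u_1+u_2)$ via Siddiqui's theorem \cite[Theorem 5]{Sidd2007} and then use the semi-exposed description $F_e^M=\left(\{e\}_{\prime}\right)_{\prime}$ from Theorem \ref{thm norm closed faces} to test $u_1,u_2$ against every $\varphi\in\{e\}_{\prime}$ and conclude $\varphi(u_1)=\varphi(u_2)=1$. The only cosmetic difference is that you spell out the strict-convexity/real-part step that the paper leaves implicit.
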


\begin{proof} Let $a\in F_e^M =\left({\{e\}}_{\prime}\right)_{\prime}$. By \cite[Theorem 5]{Sidd2007} $a$ can be written in the form $a = \frac{u_1+u_2}{2},$ where $u_1,u_2\in \partial_e(\mathcal{B}_{M})$. Let us pick an arbitrary $\varphi \in \{e\}_{\prime}\subset S(M^*)$. Since $1= \varphi (a) = \frac{\varphi(u_1)+\varphi(u_2)}{2},$ $\varphi(u_1)=\varphi(u_2)=1,$ it follows from the arbitrariness of $\varphi \in \{e\}_{\prime}\subset S(M^*)$ that  $u_1,u_2\in  \left({\{e\}}_{\prime}\right)_{\prime} = F_e^M$ as desired.
\end{proof}

Now, by combining Proposition \ref{p technical before theorem} and Lemma \ref{l MO L18 for JBWtriple} we get the following result.

\begin{corollary}\label{c technical before theorem 2} Let $M$ be a JBW$^*$-triple satisfying property {\rm(}$\mathcal{P}${\rm)}. Let $\varphi_e\in\partial_{e}(\mathcal{B}_{M^*})$ denote the unique pure atom associated with a minimal tripotent $e$ in $M^{**}$. Suppose $\Delta : S(M) \to S(Y)$ is a surjective isometry from the unit sphere of $M$ onto the unit sphere of a real Banach space $Y$. Then for each $\psi$ in $\hbox{supp}_{\Delta}(F_e^{M})$ we have $\psi \Delta (x) = \Re\hbox{e}\varphi_e(x)$ for every $x\in S(M)$.
\end{corollary}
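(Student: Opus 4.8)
The plan is to deduce Corollary~\ref{c technical before theorem 2} from Proposition~\ref{p technical before theorem} together with Lemma~\ref{l MO L18 for JBWtriple} by a density/approximation argument. First I would fix a minimal tripotent $e$ in $M^{**}$, the associated pure atom $\varphi_e$, and $\psi\in\hbox{supp}_{\Delta}(F_e^M)$. The goal is to show $\psi\Delta(x)=\Re\hbox{e}\,\varphi_e(x)$ for \emph{every} $x\in S(M)$, whereas Proposition~\ref{p technical before theorem} only gives this identity on the (typically much smaller) set $\partial_e(\mathcal{B}_M)$ of complete tripotents of $M$.

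The first reduction I would make is to pass from $S(M)$ to the face $F_e^M$. Since $e$ is minimal in $M^{**}$, $F_e^M=(e+\mathcal{B}_{M_0^{**}(e)})\cap M$ is a maximal proper norm closed face of $\mathcal{B}_M$, and by Proposition~\ref{p affinity on closed faces} the restriction $\Delta|_{F_e^M}$ is affine, with $\psi\Delta(x)=\Re\hbox{e}\,\phi(x)+\gamma$ on $F_e^M$ for suitable $\phi\in M^*$, $\gamma\in\RR$. Now Lemma~\ref{l MO L18 for JBWtriple} tells us that every $a\in F_e^M$ is the midpoint of two complete tripotents $u_1,u_2$ \emph{lying in $F_e^M$}; these $u_1,u_2$ are in particular complete tripotents of $M$, i.e.\ in $\partial_e(\mathcal{B}_M)$, so Proposition~\ref{p technical before theorem} applies to them and gives $\psi\Delta(u_j)=\Re\hbox{e}\,\varphi_e(u_j)$. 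Using the affinity of $\Delta$ on $F_e^M$, $\psi\Delta(a)=\tfrac12(\psi\Delta(u_1)+\psi\Delta(u_2))=\tfrac12(\Re\hbox{e}\,\varphi_e(u_1)+\Re\hbox{e}\,\varphi_e(u_2))=\Re\hbox{e}\,\varphi_e(a)$, since $\varphi_e$ is linear and $a=\tfrac12(u_1+u_2)$. Thus $\psi\Delta=\Re\hbox{e}\,\varphi_e$ holds on the whole of $F_e^M$ — which also pins down $\phi$ and $\gamma$: on $F_e^M$ one has $\gamma=0$ and $\phi$ agrees with $\varphi_e$ in the relevant sense.

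The remaining task is to spread this from $F_e^M$ to all of $S(M)$. Here I would invoke the definition of $\hbox{supp}_\Delta(F_e^M)$: since $\psi$ is a norm-one functional with $\psi^{-1}(\{1\})\cap\mathcal{B}_Y=\Delta(F_e^M)$, we have $\psi\Delta(x)=1$ precisely when $\Delta(x)\in\Delta(F_e^M)$, i.e.\ when $x\in F_e^M$; and $\varphi_e(x)=1$ with $\|\varphi_e\|=1$ precisely when $x\in F_e^M$ as well (by Theorem~\ref{t ER weakstar closed faces}, $F_e^M$ is the face $(\{e\}_{\prime})^{\prime}$ cut down to $M$ and $\varphi_e\in\{e\}_{\prime}$). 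So $\psi\Delta$ and $\Re\hbox{e}\,\varphi_e$ are two real-valued $1$-Lipschitz functions on $S(M)$ that agree on the maximal face $F_e^M$ and attain their maximum value $1$ exactly there. One then runs the standard argument (as in \cite{Liu2007,JVMorPeRa2017,CuePer}): for arbitrary $x\in S(M)$, consider a complete tripotent $v\in\partial_e(\mathcal{B}_M)\cap F_e^M$ and use the isometry property $\|x\pm v\|$ together with the already-established values on $F_e^M$; equivalently, note that for $x\in S(M)$ one has $\|\Delta(x)-\Delta(a)\|=\|x-a\|$ for all $a\in F_e^M$, and both $\psi\Delta$ and $\Re\hbox{e}\,\varphi_e$ are forced to take the same value at $x$ because they are the unique $1$-Lipschitz extensions from $F_e^M$ of the appropriate type. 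Concretely I would argue: pick $a\in F_e^M$ with $\|x-a\|=1+\psi\Delta(x)$ on the one hand (choosing $a=-(-x$ projected suitably$)$, using maximality of $F_e^M$ and \cite[Lemma 5.1]{ChenDong2011}), and the same identity with $\varphi_e$, to force equality.

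The main obstacle I anticipate is the last paragraph: passing from $F_e^M$ to all of $S(M)$ cleanly. The midpoint trick of Lemma~\ref{l MO L18 for JBWtriple} only reaches points of $F_e^M$, so one genuinely needs the support-functional machinery and a Lipschitz/extremal argument to cover the complement of the face. The key points to get right are (i) that $\psi$ attains its norm exactly on $\Delta(F_e^M)$ and $\varphi_e$ exactly on $F_e^M$, so the two functions ``see'' the same face, and (ii) the standard computation showing that a $1$-Lipschitz real function on $S(M)$ whose $1$-level set is a prescribed maximal face, and which is affine on that face agreeing with a fixed linear functional there, is uniquely determined — this is exactly where one mimics \cite[Lemma 2.1]{CuePer} and \cite[Lemma 2.1]{JVMorPeRa2017}. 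The JB$^*$-triple input needed for (i), that $F_e^M=\{x\in S(M):\varphi_e(x)=1\}$, is immediate from Theorem~\ref{t ER weakstar closed faces} and \eqref{eq pure atoms and minimal partial isometries}.
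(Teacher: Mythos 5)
Your first step --- establishing $\psi\Delta=\Re\hbox{e}\,\varphi_e$ on the face $F_e^M$ via Lemma \ref{l MO L18 for JBWtriple} and the affinity of $\Delta$ on that face --- is correct, but the second step, extending from $F_e^M$ to all of $S(M)$, is a genuine gap. The claim that $\psi\Delta$ and $\Re\hbox{e}\,\varphi_e$ are ``the unique $1$-Lipschitz extensions from $F_e^M$ of the appropriate type'' is not substantiated and is false as stated: a $1$-Lipschitz real function on $S(M)$ is far from determined by its values on one maximal face. The standard two-sided estimate one would try, namely $\psi\Delta(x)\geq 1-\mathrm{dist}(x,F_e^M)$ together with $\psi\Delta(x)\leq -1+\mathrm{dist}(x,F_{-e}^M)$, only pins down $\psi\Delta(x)$ when $\mathrm{dist}(x,F_e^M)+\mathrm{dist}(x,F_{-e}^M)=2$, which fails for general $x$ in a JBW$^*$-triple; this is precisely why the argument in \cite{Liu2007,JVMorPeRa2017,CuePer} does not transfer verbatim and why property $(\mathcal{P})$ was introduced.

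The point you missed is that Proposition \ref{p technical before theorem} already gives $\psi\Delta(u)=\Re\hbox{e}\,\varphi_e(u)$ for \emph{every} complete tripotent $u\in\partial_e(\mathcal{B}_M)$, not only for those lying in $F_e^M$, so there is no need to ``spread'' the identity outward from $F_e^M$ at all. Given an arbitrary $x\in S(M)$, one first finds (by Zorn's lemma and the facial theory, cf.\ \eqref{eq maximal proper faces}) a minimal tripotent $v\in M^{**}$ with $x\in F_v^M=\left(v+M_0^{**}(v)\right)\cap\mathcal{B}_M$; then Lemma \ref{l MO L18 for JBWtriple} applied to \emph{that} face writes $x=\frac{u_1+u_2}{2}$ with $u_1,u_2\in\partial_e(\mathcal{B}_M)\cap F_v^M$, so the whole segment lies in $F_v^M\subseteq S(M)$, where $\Delta$ is affine by Proposition \ref{p affinity on closed faces}. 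Hence $\psi\Delta(x)=\frac12\left(\psi\Delta(u_1)+\psi\Delta(u_2)\right)=\frac12\left(\Re\hbox{e}\,\varphi_e(u_1)+\Re\hbox{e}\,\varphi_e(u_2)\right)=\Re\hbox{e}\,\varphi_e(x)$. Your argument is exactly this with $v=e$; running it with a face adapted to each $x$ closes the gap and no support-functional or Lipschitz-extension machinery is required.
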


\begin{proof} Let $e$ be a minimal tripotent in $M^{**}$, $\varphi_e\in\partial_{e}(\mathcal{B}_{M^*})$ the unique pure atom associated with $e$, and let $\psi$ be an element in $\hbox{supp}_{\Delta}(F_e^{E})$. Proposition \ref{p technical before theorem} implies that $\psi \Delta (u) =\Re\hbox{e} \varphi_e (u)$ for every $u\in \partial_{e}(\mathcal{B}_{M})$.\smallskip

Let us fix $x\in S(M)$. By applying Zorn's lemma 
there exists a minimal tripotent $v\in M^{**}$ such that $x\in F_v^M = \left(v+M_0^{**}(v)\right)\cap \mathcal{B}_{M}$. Lemma \ref{l MO L18 for JBWtriple} and Proposition \ref{p affinity on closed faces} give the desired statement.
\end{proof}

Before approaching our main goal we shall establish a technical result. Let $e$ and $v$ be two tripotents in a JB$^*$-triple $E$. Accordingly to the standard notation, we shall say that $e$ and $v$ are \emph{collinear} if $e\in E_1(v)$ and $v\in E_1(e)$.\smallskip

Y. Friedman and B. Russo proved in \cite[Proposition 6]{FriRu85} that every JBW$^*$-triple $M$ satisfies a pre-variant of the so-called \emph{``extreme ray property''}, that is,  if $u$ and $e$ are tripotents in $M$ and $e$ is minimal, then $P_2(u) (e)$ is a scalar multiple of another minimal tripotent in $M$. Actually, the same conclusion holds when $M$ is a JB$^*$-triple $E$ because minimal (respectively, complete) tripotents in $E$ are minimal (respectively, complete) in $E^{**}$.

\begin{lemma}\label{l minimal respect complete}
Let $u$ be a complete tripotent in a JB$^*$-triple $E$, that is, $u\in \partial_e(\mathcal{B}_E)$. Then every minimal tripotent $e$ in $E$ decomposes as a linear combination of the form $e=\lambda v+\mu w$, where $v$ and $w$ are two collinear tripotents in $E$ which are  minimal or zero, $v\in E_2(u)$, $w\in E_1(u)$, and $\lambda,\mu\in \mathbb{R}^+_0$ satisfy $\lambda^2+\mu^2=1$.
\end{lemma}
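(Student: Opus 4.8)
The plan is to use the Peirce decomposition of $e$ relative to $u$ together with the ``extreme ray property'' of Friedman--Russo recalled just above the statement. First I would write $e = P_2(u)(e) + P_1(u)(e) + P_0(u)(e)$; since $u$ is complete, $E_0(u) = \{0\}$, so $e = P_2(u)(e) + P_1(u)(e)$. By \cite[Proposition 6]{FriRu85} (in the form stated for JB$^*$-triples, using that minimal tripotents in $E$ remain minimal in $E^{**}$), the element $P_2(u)(e)$ is a scalar multiple of a minimal tripotent: write $P_2(u)(e) = \lambda v$ with $v$ a minimal tripotent in $E_2(u)$ (or $v = 0$ if $P_2(u)(e) = 0$) and $\lambda \geq 0$. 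So the first task is to show $P_1(u)(e)$ is likewise a nonnegative scalar multiple $\mu w$ of a minimal (or zero) tripotent $w \in E_1(u)$, that $v$ and $w$ are collinear, and that $\lambda^2 + \mu^2 = 1$.

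The key computation I expect to drive everything is the identity expressing that $e$ is a tripotent, applied through Peirce arithmetic. Set $x_2 = P_2(u)(e)$, $x_1 = P_1(u)(e)$, so $e = x_2 + x_1$ and $\{e,e,e\} = e$. Expanding $\{x_2+x_1, x_2+x_1, x_2+x_1\}$ and sorting the nine resulting terms by their Peirce-$k(u)$ components (using $\{E_i,E_j,E_k\} \subseteq E_{i-j+k}$ and the annihilation rules), the Peirce-$2$ component of the equation $\{e,e,e\}=e$ reads $\{x_2,x_2,x_2\} + \{x_1,x_1,x_2\} = x_2$, and the Peirce-$1$ component reads $2\{x_2,x_2,x_1\} + 2\{x_1,x_2,x_1\}$-type terms $= x_1$; more precisely the relevant relations are $\{x_2,x_2,x_2\} + \{x_1,x_1,x_2\} = x_2$ and $2\{x_1,x_1,x_1\}$-free terms collect to $x_1$. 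Substituting $x_2 = \lambda v$ and using that $v$ is a minimal tripotent in the JB$^*$-algebra $E_2(u)$ (so $\{v,v,v\} = v$ and $E_2(v) = \mathbb{C}v$) will force $P_2(v)(\{x_1,x_1,v\})$ to be a multiple of $v$; since $\{x_1,x_1,v\}$ is positive in $E_2(v) = \mathbb{C}v$ I can extract a scalar, and the Peirce-$2$ equation then pins down a relation between $\lambda$ and $\|x_1\|_{v}$. Meanwhile the minimality of $e$ itself — $E_2(e) = \mathbb{C}e$ — should be used to show $x_1$ generates (together with $v$) a small subtriple in which everything is forced; in particular $x_1 = \mu w$ for a minimal tripotent $w$, and the subtriple generated by $v,w$ inside $E^{**}$ is forced to be a rank-one Cartan factor (a spin or $2\times 1$ type piece), which is exactly where collinearity of $v$ and $w$ comes from. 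Finally $\lambda^2 + \mu^2 = 1$ drops out of computing $\|\{e,e,e\}\| = \|e\|^3 = 1$, or equivalently from the Peirce-$2$ relation combined with $w \in E_1(v)$, $v \in E_1(w)$, which gives $\{w,w,v\} = \tfrac12 v$ and hence $\lambda^3 + \lambda\mu^2 \cdot(\text{const})= \lambda$ forcing $\lambda^2 + \mu^2 = 1$.

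The main obstacle I anticipate is the middle step: extracting from the tripotent and minimality conditions that $P_1(u)(e)$ is genuinely a scalar multiple of a \emph{single} minimal tripotent $w$ collinear with $v$, rather than a more complicated element of $E_1(u)$. The cleanest route is probably to pass to $E^{**}$, invoke the Friedman--Russo extreme-ray property in the form $P_2(v)(e) \in \mathbb{C}\,(\text{minimal tripotent})$ for the minimal tripotent $v$ just constructed — but here $e \in E_1(u) \oplus E_2(u)$ and $v \leq$ nothing obvious, so instead I would look at the JBW$^*$-subtriple $N$ of $E^{**}$ generated by $e$ and $u$ (or by $e, v, w$), show it is finite-dimensional with $e$ minimal in it, and classify: a finite-dimensional JB$^*$-triple containing a minimal tripotent $e$ whose Peirce-$2$ part w.r.t. a completion $u$ is one-dimensional must, by the classification of Cartan factors, place $e$ on a ``2-dimensional spin factor'' slice, yielding $e = \lambda v + \mu w$ with $v,w$ collinear. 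An alternative, more hands-on route avoiding classification: directly verify from $\{e,e,e\} = e$, $E_2(e) = \mathbb{C}e$, and positivity of $\{x_1,x_1,v\}$ in $\mathbb{C}v$ that $Q(v)(x_1) = 0$ (so $x_1 \in E_1(v)$) and $\{x_1, x_1, x_1\} = \tfrac12 \|x_1\|^2 x_1$-type relation forcing $w := x_1/\|x_1\|$ to be a tripotent with $v \in E_1(w)$; this is the computational heart and where most care is needed with the conjugate-linearity of $Q$ and the bookkeeping of Peirce components.
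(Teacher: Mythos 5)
Your setup matches the paper's: write $e=e_2+e_1$ with $e_k=P_k(u)(e)$ (completeness of $u$ killing the Peirce-zero part), and invoke the Friedman--Russo extreme ray property to get $e_2=\lambda v$ with $v$ minimal. But the step you yourself flag as ``the main obstacle'' --- showing that $e_1$ lies in $E_1(v)$ and is a nonnegative multiple of a single tripotent $w$ collinear with $v$ --- is exactly where your proposal has a genuine gap. The identities you propose to drive the argument, namely the Peirce-$k(u)$ components of $\{e,e,e\}=e$, are too weak: they only give relations such as $\{e_2,e_2,e_2\}+2\{e_1,e_1,e_2\}=e_2$ and $2\{e_2,e_2,e_1\}+\{e_1,e_1,e_1\}=e_1$, in which the contributions with $e_1$ and with $e_2$ in the \emph{middle} slot are summed together, and from these one cannot isolate $L(v,v)e_1=\frac12 e_1$ or the cubic relation $\{e_1,e_1,e_1\}=\gamma e_1$. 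Your two fallback routes are not carried out: the classification route rests on the unproved (and not obvious) claim that the subtriple generated by $e$ and $u$ is finite dimensional, and the ``hands-on'' route is precisely the computation that remains to be done.

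The missing idea, which is the heart of the paper's proof, is to use minimality of $e$ through the operator $Q(e)$: since $Q(e)$ maps $E$ into $E_2(e)=\mathbb{C}e$, one has $\{e,e_2,e\}=\alpha e$ and $\{e,e_1,e\}=\gamma e$ \emph{separately}, which fixes the middle slot. Expanding $\{e,e_2,e\}=\{e_2,e_2,e_2\}+2\{e_2,e_2,e_1\}+\{e_1,e_2,e_1\}$ and sorting by Peirce components of $u$ (the last term lands in $E_0(u)=\{0\}$) yields $\alpha=\lambda^2$ and $e_1=2\{v,v,e_1\}$, i.e.\ $e_1\in E_1(v)$; expanding $\{e,e_1,e\}$ likewise (now $\{e_2,e_1,e_2\}$ lands in the vanishing ``$E_3$'' component) yields $\{e_1,e_1,e_1\}=\gamma e_1$, whence $e_1=\mu w$ for a tripotent $w$ by the triple functional calculus, and $2\{v,w,w\}=v$, giving collinearity. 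The remaining assertions (minimality of $w$ and $\lambda^2+\mu^2=1$) then follow from the Dang--Friedman triple system analyzer, which you gesture at but would still need to invoke. Without the $Q(e)$ trick, or a worked-out substitute, the proof does not close.
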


\begin{proof} By applying that $u$ is a complete tripotent (i.e. $E_0(u)=\{0\}$), we deduce that $e =e_2+e_1$ where $e_k = P_k(u) (e)\in E_k(u)$ for $k=1,2$.\smallskip

Since $e$ is minimal, the pre-variant version of the extreme ray property (see \cite[Proposition 6]{FriRu85}) implies that $P_2 (u)(e) = e_2 = \lambda v$, where $v$ is a minimal tripotent in $E$ and $\lambda\geq 0$. We observe that $e=e_1$ is a minimal tripotent whenever $\lambda$ vanishes. We can thus assume that $\lambda> 0$. It is clear that $e_1$ belongs to $E_1(v).$ Since by the identity
$$\alpha e= \{e,e_2,e\}=\{e_2,e_2,e_2\}+2\{e_2,e_2,e_1\}+\{e_1,e_2,e_1\} \ \hbox{(with $\alpha\in \mathbb{C})$},$$ combined with Peirce rules and the completeness of $u$, we obtain that $\{e_1,e_2,e_1\}=0$, $\alpha \lambda v=\{e_2,e_2,e_2\} = |\lambda|^2 \lambda v$ (and thus $\alpha = |\lambda|^2$), and $|\lambda|^2 e_1 = 2\{e_2,e_2,e_1\} = 2 |\lambda|^2 \{v,v,e_1\},$ which proves that $e_1\in E_1(v).$\smallskip

Now, having in mind the identity
$$\gamma e= \{e,e_1,e\}=\{e_1,e_1,e_1\}+2\{e_2,e_1,e_1\}+\{e_2,e_1,e_2\} \ \hbox{(with $\gamma\in \mathbb{C})$},$$ we get $ \{e_1,e_1,e_1\} = \gamma e_1$, and hence, by the triple functional calculus, $e_1$ is a multiple of a tripotent in $E$. That is, $e_1 = \mu w$, where $w$ is a tripotent in $E$, $\mu\geq 0$, and $|\mu|^2 = \gamma$. We may reduce to the case in which $\mu \neq 0$.  We further know that $2 \lambda \mu^2 \{v,w,w\}=2\{e_2,e_1,e_1\} = \gamma e_2 = \gamma \lambda v,$ witnessing that $2 \{v,w,w\} = v$. Therefore $v$ and $w$ are collinear tripotents in $E$. The \emph{triple system analyzer} (see \cite[Proposition 2.1 and Lemma in page 306]{DanFri87}) gives the desired statement.
\end{proof}

The best known examples of JBW$^*$-triples are given by the so-called Cartan factors. There are six types of Cartan factors defined as follows:\smallskip

\emph{Cartan factor of type 1}: the complex Banach space $B(H, K)$, of all bounded linear operators between two complex Hilbert spaces, $H$ and $K$, whose triple product is given by \eqref{eq product operators}.\smallskip

Given a conjugation, $j$, on a complex Hilbert space, $H$, we can define a linear involution on $B(H)$ defined by $x \mapsto x^{t}:=j x^* j$. \smallskip

\emph{Cartan factor of type 2:} the subtriple of $B(H)$ formed by the skew-symmetric operators for the involution $t$.\smallskip

\emph{Cartan factor of type 3:} the subtriple of $B(H)$ formed by the $t$-symmetric operators.\smallskip

\emph{Cartan factor of type 4} or \emph{spin:} a complex Banach space $X$ admitting a complete inner product $(.|.)$ and a conjugation $x\mapsto \overline{x},$ for which the norm of $X$ is given by $$ \|x\|^2 = (x|x) + \sqrt{(x|x)^2 -|
(x|\overline{x}) |^2}.$$

\emph{Cartan factors of types 5 and 6} (also called \emph{exceptional} Cartan factors) consist of matrices over the eight
dimensional complex algebra of Cayley numbers; the type 6 consists of all 3 by 3 self-adjoint matrices and has a natural Jordan algebra structure, and the type 5 is the subtriple consisting of all 1 by 2 matrices.\smallskip

Our next goal is to show the connection between rank and property {\rm(}$\mathcal{P}${\rm)} in the case of Cartan factors.

\begin{proposition}\label{p Cartan factors fo rank bigger than or equal to three satisfy P } Every Cartan factor of rank bigger than or equal to three satisfies property {\rm(}$\mathcal{P}${\rm)}.
\end{proposition}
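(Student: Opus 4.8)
The plan is as follows. Let $C$ be a Cartan factor with $\mathrm{rank}(C)\geq 3$, fix a complete tripotent $u$ in $C$ and a minimal tripotent $e$ in $C^{**}$, and aim to produce a minimal projection $w$ of the JB$^*$-algebra $C_2(u)$ with $w\perp e$. Any such $w$ suffices: being a projection in $C_2(u)$ it satisfies $w\leq u$, equivalently $u-w$ is a tripotent orthogonal to $w$, equivalently $u = w + P_0(w)(u)$; and since $w\leq u$ forces $(C^{**})_2(w)\subseteq (C^{**})_2(u)=C_2(u)^{**}$ and there $(C^{**})_2(w)$ is the weak$^*$ closure of $(C_2(u))_2(w)=\mathbb{C}w$, the projection $w$ is a minimal tripotent of $C^{**}$. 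Before anything else one checks that $u$ is complete as a tripotent of $C^{**}$ as well: the Bergman operator $P_0(u)=\mathrm{Id}-2L(u,u)+Q(u)^2$ is weak$^*$ continuous on $C^{**}$ by the separate weak$^*$ continuity of the triple product, it vanishes on $C$ because $u$ is complete in $C$, and $C$ is weak$^*$ dense in $C^{**}$; hence $C^{**}_0(u)=\{0\}$.

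Next I would invoke Lemma~\ref{l minimal respect complete} for the JB$^*$-triple $C^{**}$, the complete tripotent $u$ and the minimal tripotent $e$: this yields a decomposition $e=\lambda v+\mu w'$, where $v\in C^{**}_2(u)$ and $w'\in C^{**}_1(u)$ are two collinear tripotents, each of which is minimal or zero, and $\lambda,\mu\in\mathbb{R}^+_0$ with $\lambda^2+\mu^2=1$. Since $\{w,w,e\}=\lambda\{w,w,v\}+\mu\{w,w,w'\}$, it now suffices to find a minimal projection $w$ in $C_2(u)$ that is orthogonal to $v$ and to $w'$.

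To do this I would work inside the JBW$^*$-algebra $B:=C^{**}_2(u)=C_2(u)^{**}$, whose unit is $u$. By \eqref{eq positive element FR} the element $\{v,v,u\}$ is positive in $B$; moreover $\{w',w',u\}$ lies in $C^{**}_2(u)=B$ by the Peirce rules (as $w'\in C^{**}_1(u)$), is positive in $B$ by \eqref{eq positive element FR}, and in fact equals $\frac12 P_1(w')(u)$, because $P_2(w')(u)$ lies in $C^{**}_2(w')\cap C^{**}_2(u)=\mathbb{C}w'\cap C^{**}_2(u)=\{0\}$ (this being trivial when $w'=0$). Let $s$ be the range projection, taken in $B$, of the positive element $\{v,v,u\}+\{w',w',u\}$; thus $s\leq u$. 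The heart of the argument, and the only place where the rank hypothesis is used, is the estimate
$$\mathrm{rank}_B(s)\leq 2.$$
Indeed, the range projection of $\{v,v,u\}$ is the join of the left and right range projections of the minimal tripotent $v$ inside $B$, so it has rank at most $2$; the range projection of $\{w',w',u\}$ has rank at most $1$; and the collinearity of $v$ and $w'$ forces this last range projection to lie below the previous one. These facts are verified by going through the coordinate/grid descriptions of the Cartan factors of rank $\geq 3$ (the type $1$ factors $B(H,K)$, the type $2$ and type $3$ factors, and the type $6$ factor). Once $\mathrm{rank}_B(s)\leq 2$ is known, one uses that $C_2(u)$ is again a Cartan factor, of rank equal to $\mathrm{rank}(C)\geq 3$, to produce a minimal projection $w$ of $C_2(u)$ with $w\perp s$. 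Then, since $\{v,v,u\}$ and $\{w',w',u\}$ belong to $B_2(s)$ while $w\in B_0(s)$, the Peirce rules give $w\perp\{v,v,u\}$ and $w\perp\{w',w',u\}$, so $w$ is orthogonal to the range tripotents of these two positive elements, that is, $w\perp v$ and $w\perp w'$. By the reduction above, $w$ is a minimal tripotent of $C^{**}$ with $w\perp e$ and $u=w+P_0(w)(u)$, and therefore $C$ satisfies property~$(\mathcal{P})$.

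The main obstacle will be precisely the estimate $\mathrm{rank}_B(s)\leq 2$. It packages together the description of the minimal tripotents and of the Peirce-one spaces of the (in general non-reflexive) Cartan factors once one passes to their biduals, together with the observation — this is exactly what makes rank $\geq 3$, rather than rank $\geq 4$, the correct threshold — that the collinearity of the two Peirce components $v$ and $w'$ of $e$ forces their ``footprints'' in $C_2(u)$ to overlap. I expect that a clean treatment of this point, and of the attendant existence of a minimal projection of $C_2(u)$ avoiding a rank-$\leq 2$ projection of $C_2(u)^{**}$, genuinely requires running through the classification list of the Cartan factors.
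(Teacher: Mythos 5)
Your overall strategy is the same as the paper's: decompose $e$ via Lemma \ref{l minimal respect complete} into Peirce components relative to $u$, show that the ``footprint'' of $e$ in the Peirce-2 space of $u$ is dominated by a projection of rank at most two, and then use the hypothesis $\mathrm{rank}\geq 3$ to produce a minimal projection below $u$ orthogonal to that footprint. The problem is that the step you yourself identify as ``the heart of the argument'' --- the estimate $\mathrm{rank}_B(s)\leq 2$, together with the subsidiary claims that the range projection of $\{w',w',u\}$ has rank at most one and that collinearity forces it below the range projection of $\{v,v,u\}$ --- is never proved. You defer it to ``going through the coordinate/grid descriptions'' of the type 1, 2, 3 and 6 factors, and that case-by-case verification (in particular for the exceptional rank-three factor, where matrix coordinates over the Cayley numbers are needed) is not carried out. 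The same applies to the existence of a minimal projection of $C_2(u)$, rather than of its bidual, orthogonal to a rank-$\leq 2$ projection of $C_2(u)^{**}$, which you also flag but do not establish. As it stands the proposal is a plausible programme, not a proof.

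For comparison, the paper closes exactly this gap without touching the classification. It writes $\{e,e,u\}=P_2(e)(u)+\frac12 P_1(e)(u)$ with $P_2(e)(u)=\delta e$, invokes the Dang--Friedman bound that $M_1^{**}(e)$ has rank at most two for a minimal tripotent $e$ (\cite[Corollary 2.2]{DanFri87}), and uses the triple system analyzer to conclude that $\{e,e,u\}$ lies in a JBW$^*$-subtriple of rank at most two; hence its range tripotent $r$ in $M_2^{**}(u)$ is a projection of rank at most two, and a minimal projection $w$ of $M_2^{**}(u)$ (not of $M_2(u)$, which sidesteps your second difficulty) orthogonal to $r$ exists because $M_2^{**}(u)$ has rank at least three. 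A Jordan-identity computation then gives $e\perp u-r$, whence $w\perp e$. The mixed component $v_1\in M_1^{**}(u)$ of $e$ is handled not by a rank estimate on its footprint but by a Peirce-commutation argument showing $P_2(w)(v_1)=P_1(w)(v_1)=0$ automatically once $w\perp v_2$ and $w\leq u$. If you want to salvage your version, the cleanest repair is to replace your unproved overlap claim by these two ingredients: the rank bound on $M_1^{**}(e)$ from \cite{DanFri87}, and the observation that orthogonality to the Peirce-2 component propagates to the Peirce-1 component for free.
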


\begin{proof} Let $M$ be a Cartan factor of rank $\geq 3$. Let $e$ be a minimal tripotent in $M^{**},$ and let $u$ be a complete tripotent in $M$ {\rm(}that is $u\in \partial_e(\mathcal{B}_{M})${\rm)}. By \eqref{eq positive element FR} the element $P_2(u) \{e,e,u\}$ is positive in $M^{**}_2(u)$. On the other hand, by Peirce arithmetic, $\{e,e,u\} = P_2(e) (u) + \frac12 P_1(e)(u),$ where $P_2(e) (u) \in M_2^{**} (e) = \mathbb{C} e$, and hence $P_2(e) (u) = \delta e$ for some $\delta \in \mathbb{C}$. We observe that $u$ is also complete in $M^{**}$.\smallskip

By \cite[Corollary 2.2]{DanFri87} the subtriple $M^{**}_1(e)$ has rank at most two. Therefore, $\frac12 P_1(e)(u) = \lambda_1 v_1 +\lambda_2 v_2$, where $\lambda_1,\lambda_2\in \mathbb{R}_0^+$ and $v_1$ and $v_2$ are mutually orthogonal minimal tripotents in $M_1^{**} (e)$ or zero with $v_j \neq 0$ if $\lambda_j >0$.\smallskip

We shall distinguish several cases:\smallskip

\emph{Case 1:} $\lambda_1, \lambda_2>0$. By the triple system analyzer \cite[Proposition 2.1$(iii)$]{DanFri87}, $v_1,v_2$ are minimal tripotents in $M^{**}$ and the triplet $(v_1,e,v_2)$ is a prequadrangle in the terminology of \cite{DanFri87}. Therefore the three points $e,v_1,v_2$ are contained in a rank two JBW$^*$-subtriple of $M^{**}$. \smallskip

\emph{Case 2:} $\lambda_1>0$, $\lambda_2 =0$. By the triple system analyzer \cite[Proposition 2.1$(i)$ and $(ii)$]{DanFri87}, $v_1$ is a minimal tripotent in $M^{**}$ and $e,v_1$ are collinear; or $v_1$ is a minimal tripotent in $M_1^{**}(e)$ but not minimal in $M^{**}$ and there exists a minimal tripotent $\widetilde{e}$ in $M^{**}$ such that $(e,v_1,\widetilde{e})$ is a trangle in the terminology of \cite{DanFri87}. That is, the three points $e,v_1,v_2$ is contained in a rank one or two JBW$^*$-subtriple of $M^{**}$ (cf. \cite[LEMMA in page 306]{DanFri87}).\smallskip

\emph{Case 3:} $\lambda_1=\lambda_2 =0$, or equivalently, $P_1(e)(u) =0$. In this case $\{e,e,u\} =\delta e$ is contained in a rank one JBW$^*$-subtriple of $M^{**}$.\smallskip

We shall first assume that $e\in M_2^{**}(u) \cup M_1^{**}(u)$.\smallskip

It follows from the above cases that the set $\{P_2(e) (u) , \frac12 P_1(e)(u)\}$ is contained in a JBW$^*$-subtriple $F$ of $M^{**}$ of rank at most two. Since we have assumed that $e\in M_2^{**}(u) \cup M_1^{**}(u)$, the element $\{e,e,u\}$ coincides with $P_2(u) \{e,e,u\}$ and it is a positive element in the JBW$^*$-algebra $M_2^{**} (u)$. We also know that $\{e,e,u\}= P_2(e)(u) +\frac12 P_1(e)(u)\in F.$ The range tripotent of $\{e,e,u\}$ in $F$ and in $M_2^{**} (u)$ give the same element which will be denoted by $r$. Clearly, $r$ is a projection in $M_2^{**} (u)$ which must be minimal or the sum of two mutually orthogonal minimal projections in $M_2^{**} (u)$.\smallskip

Now, having in mind that $M$, and hence $M_2(u)$ and $M_2^{**} (u),$ all have rank $\geq 3$ (cf. \cite[Proposition 5.8]{Ka97}), we deduce the existence of a minimal projection $w$ in $M_2^{**} (u)$ which is orthogonal to $r$ (and hence to $\{e,e,u\}$). By applying that $r$ is the range tripotent of $\{e,e,u\}$, and the fact that $$M_2^{**}(u)= \left(M_2^{**}(u)\right)_2(r) \oplus \left(M_2^{**}(u)\right)_1(r) \oplus \left(M_2^{**}(u)\right)_0(r) ,$$ where $\left(M_2^{**}(u)\right)_0(r) = \left(M_2^{**}(u)\right)_2(u-r)$ and $\left(M_2^{**}(u)\right)_1(r) = \left(M_2^{**}(u)\right)_1(u-r),$ we deduce from the Jordan identity that
$$\{e,e,u-r\}+\{e,e,r\}=\{e,e,u\}= \{r,\{e,e,u\},r\} $$
$$= -\{e,e,\{r,u,r\}\} +2\{\{e,e,r\},u,r\}=- \{e,e,r\} + 2\{\{e,e,r\},r,r\}.$$
Therefore $$ \{e,e,u-r\} = - 2 \{e,e,r\} + 2\{\{e,e,r\},r,r\} \in \left(M_2^{**}(u)\right)_2(r) \oplus \left(M_2^{**}(u)\right)_1(r),$$
which implies that $0= P_2(u-r) \{e,e,u-r\} =  \{P_2(u-r) (e) , P_2(u-r)(e),u-r\}+ \{P_1(u-r)(e),P_1(u-r)(e),u-r\} $. Lemma 1.5 in \cite{FriRu85}, and the comments preceding it, now assure that $P_2(u-r) (e) = P_1(u-r) (e)=0$, and thus $e = P_0(u-r) (e) \perp u-r$. Since $w$ is a minimal projection in $M_2^{**} (u)$ with $w\leq u-r$, it follows that $w$ is a minimal tripotent in $M^{**}$ with $w\perp e$.\smallskip

We consider now the general case in which $e\in M_2^{**}(u) \oplus M_1^{**}(u)$. By Lemma \ref{l minimal respect complete}, $e$ decomposes as a linear combination of the form $e=\lambda v_2+\mu v_1$, where $v_2$ and $v_1$ are two collinear tripotents in $M^{**}$ which are  minimal or zero, $v_2\in M^{**}_2(u)$, $v_1\in M^{**}_1(u)$, and $\lambda,\mu\in \mathbb{R}^+_0$ satisfy $|\lambda|^2+|\mu|^2=1$.\smallskip

If $\lambda\neq 0$, we apply the first part of this proof to the element $v_2$ to find a minimal tripotent $w\in M^{**}$ such that $w\leq u$ and $w\perp v_2$. We shall next show that $w\perp v_1$. Indeed, the element $v_2 +w$ is a tripotent in $M^{**}_2 (u)$, and the corresponding Peirce projections commute, that is, $P_j(v_2) P_k(w)= P_k(w) P_j(v_2)$ for all $j,k\in \{0,1,2\}$ (cf. \cite[(1.10)]{Horn87}). The element $P_1(w) (v_1) =  P_1(w) P_1(v_2) (v_1) = P_1(v_2) P_1(w) (v_1)\in M_1^{**}(w) \cap M_1^{**}(v_2)\subset M^{**}_2(w+v_2)\subset M_2^{**}(u)$. However, $P_1(w) (v_1) =  P_1(w) P_1(u) (v_1)  =  P_1(u)  P_1(w) (v_1)\in M_1^{**} (u)$, and thus $P_1(w) (v_1) =0$. Moreover, $P_2(w) (v_1)\in M_2^{**} (w) \subset M_2^{**} (u),$ and $P_2(w) (v_1) = P_2(w) P_1(u) (v_1) = P_1(u)  P_2(w) (v_1)  \in M_1^{**} (u),$ which shows that $P_2(w) (v_1) =0 $, and consequently $v_1 = P_0(w) (v_1) \perp w$, as desired.\smallskip

Finally, if $\lambda =0$ we get $e = \mu v_1\in M^{**}_1(u)$ and we finish by applying the first part of this proof.
\end{proof}

We can now prove that most of JBW$^*$-triples satisfy the Mazur--Ulam property.

\begin{theorem}\label{t JBW-triples satisfy the MUP} Let $M$ be a JBW$^*$-triple with rank bigger than or equal to three. Then, every surjective isometry from the unit sphere of $M$ onto the unit sphere of a real Banach space $Y$ admits a unique extension to a surjective real linear isometry from $M$ onto $Y$.
\end{theorem}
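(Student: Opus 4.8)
The plan is to derive the statement from Corollary~\ref{c technical before theorem 2} together with the facial description of $\mathcal{B}_M$, once we know that $M$ enjoys property~$(\mathcal{P})$ (Definition~\ref{def property P}). So the first, and main, step is to establish: \emph{every JBW$^*$-triple $M$ of rank $\geq 3$ satisfies property~$(\mathcal{P})$}. Write the atomic part of $M^{**}$ as an $\ell_{\infty}$-sum $A=\bigoplus_{\alpha} C_{\alpha}$ of Cartan factors. Since the atomic representation of $M$ is an isometric triple homomorphism into $A$, one has $\hbox{rank}(A)\geq \hbox{rank}(M)\geq 3$, hence $\sum_{\alpha}\hbox{rank}(C_{\alpha})\geq 3$; in particular $A$ cannot be a single Cartan factor of rank $\leq 2$.

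Fix now a minimal tripotent $e$ in $M^{**}$ and a complete tripotent $u$ in $M$. Since $M^{**}_2(e)=\mathbb{C}e$, the tripotent $e$ lies in exactly one summand $C_{\beta}$ and is minimal there, hence minimal in $C_{\beta}^{**}$. Writing $u=\bigoplus_{\gamma} u_{\gamma}\oplus u_N$ according to the decomposition of $M^{**}$, completeness of $u$ forces each non-zero component $u_{\gamma}$ to be a non-zero complete tripotent of $C_{\gamma}$, and $u_{\gamma}\leq u$ in $M^{**}$. If $C_{\beta}$ has rank $\geq 3$, then Proposition~\ref{p Cartan factors fo rank bigger than or equal to three satisfy P } applied inside $C_{\beta}$ provides a minimal tripotent $w\leq u_{\beta}$ of $C_{\beta}$ with $w\perp e$ (an inspection of that proof shows that the minimal tripotent may be taken inside $C_{\beta}$, not merely in $C_{\beta}^{**}$). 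Otherwise $C_{\beta}$ has rank $\leq 2$, so by the previous paragraph there is a non-zero summand $C_{\gamma}$ with $\gamma\neq\beta$; we let $w$ be any minimal tripotent of $C_{\gamma}$ dominated by $u_{\gamma}$ (every non-zero tripotent of a Cartan factor dominates a minimal tripotent), and then $w\perp e$ because $C_{\gamma}\perp C_{\beta}$. In either case $w$ is a minimal tripotent of $M^{**}$ with $w\perp e$ and $w\leq u$, that is, $u=w+P_0(w)(u)$. Thus $M$ satisfies property~$(\mathcal{P})$.

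Now let $\Delta:S(M)\to S(Y)$ be a surjective isometry onto the unit sphere of a real Banach space $Y$, and put $T:M\to Y$, $T(0)=0$ and $T(x)=\|x\|\,\Delta(x/\|x\|)$ for $x\neq0$; then $T$ is positively homogeneous and $\|T(x)\|=\|x\|$. Take any $\psi\in S(Y^*)$ for which $F:=\psi^{-1}(\{1\})\cap\mathcal{B}_Y$ is a maximal convex subset of $S(Y)$. Since $\Delta^{-1}:S(Y)\to S(M)$ is again a surjective isometry between unit spheres, it carries maximal convex subsets onto maximal convex subsets (cf.\ \cite[Lemma 5.1]{ChenDong2011}); as the latter coincide with the maximal proper norm closed faces of $\mathcal{B}_M$, we obtain $\Delta^{-1}(F)=F^M_e$ for some minimal tripotent $e$ in $M^{**}$ by \eqref{eq maximal proper faces}, i.e.\ $\psi\in\hbox{supp}_{\Delta}(F^M_e)$. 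By property~$(\mathcal{P})$ and Corollary~\ref{c technical before theorem 2}, $\psi\Delta(x)=\Re\hbox{e}\,\varphi_e(x)$ for every $x\in S(M)$, and therefore, by homogeneity, $\psi(T(x))=\Re\hbox{e}\,\varphi_e(x)$ for every $x\in M$; in particular $\psi\circ T$ is real linear and $\psi(T(-x))=-\psi(T(x))$.

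To finish, given $0\neq z\in Y$ choose $\phi_0\in S(Y^*)$ with $\phi_0(z)=\|z\|$. Then $z/\|z\|$ lies in the non-empty convex set $\phi_0^{-1}(\{1\})\cap\mathcal{B}_Y\subseteq S(Y)$, which by Zorn's lemma is contained in a maximal convex subset $G$ of $S(Y)$; writing $G=\psi_G^{-1}(\{1\})\cap\mathcal{B}_Y$ we get $\psi_G(z)=\|z\|$. Hence the functionals $\psi$ considered above norm $Y$, so they separate its points, and the previous paragraph yields $T(x+y)=T(x)+T(y)$ and $T(-x)=-T(x)$ for all $x,y\in M$; together with positive homogeneity this shows that $T$ is real linear. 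Being linear and norm-preserving, $T$ is an isometry; it is surjective because it extends $\Delta$ and $\Delta(S(M))=S(Y)$; and it is the unique real linear extension of $\Delta$, since any such extension agrees with $\Delta$, hence with $T$, on $S(M)$. The crux is the first step---tracking how the minimal and complete tripotents of $M^{**}$ are distributed among the Cartan summands of its atomic part, and localising in the right summand the minimal tripotent produced by Proposition~\ref{p Cartan factors fo rank bigger than or equal to three satisfy P }; the remainder is the linearisation scheme of \cite{MoriOza2018}.
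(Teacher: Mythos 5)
Your proposal is correct and follows essentially the same route as the paper: reduce the theorem to property $(\mathcal{P})$ plus Corollary \ref{c technical before theorem 2}, verify property $(\mathcal{P})$ by distributing $e$ and the components of $u$ among the Cartan summands of the atomic part of $M^{**}$ and invoking Proposition \ref{p Cartan factors fo rank bigger than or equal to three satisfy P } when the relevant summand has rank $\geq 3$, and then linearise via a norming family of supporting functionals. The only differences are cosmetic --- you organise the case split by the rank of the summand containing $e$ rather than by the number of summands, and you write out the final linearisation (a norming family on $Y$) instead of citing \cite[Lemma 6]{MoriOza2018} with the norming family $\{\varphi_e\}$ on $M$.
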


\begin{proof} Let $\Delta : S(M) \to S(Y)$ be a surjective isometry from the unit sphere of $M$ onto the unit sphere of a Banach space $Y$. If we show that $M$ satisfies property {\rm(}$\mathcal{P}${\rm)}, then it follows from Corollary \ref{c technical before theorem 2} that, for each minimal tripotent $e$ in $M^{**}$ and each $\psi$ in $\hbox{supp}_{\Delta}(F_e^{M})$ we have $\psi \Delta (x) = \Re\hbox{e}\varphi_e(x)$ for every $x\in S(M)$, where $\varphi_e\in\partial_{e}(\mathcal{B}_{M^*})$ is the unique pure atom associated with a minimal tripotent $e$ in $M^{**}$. Let $\mathcal{U}_{min} (M^{**})$ denote the set of all minimal tripotents in $M^{**}$. For each $e\in \mathcal{U}_{min} (M^{**})$, we pick $\psi_e\in \hbox{supp}_{\Delta}(F_e^{M})$ (cf. Lemma \ref{l existence of support functionals for the image of a face}). We consider the families $\{\varphi_e\}_{e \in \mathcal{U}_{min} (M^{**})}$ and $\{\psi_e\}_{e \in \mathcal{U}_{min} (M^{**})}$. Since the set $\{\varphi_e\}_{e \in \mathcal{U}_{min} (M^{**})}$  is norming on $M$, and $\psi_e \Delta (x) =\Re\hbox{e} \varphi_e(x)$ for every $x\in S(M)$, the conclusion of the theorem will follow from \cite[Lemma 6]{MoriOza2018}.\smallskip

We shall finally prove that $M$ satisfies property {\rm(}$\mathcal{P}${\rm)}. Let $e$ be a minimal tripotent in $M^{**},$ and let $u$ be a complete tripotent in $M$ {\rm(}that is $u\in \partial_e(\mathcal{B}_{M})${\rm)}. By considering the atomic decomposition of $M^{**}$, we can write $M^{**} = \mathcal{A}\oplus \mathcal{N},$ as the $\ell_{\infty}$-direct (orthogonal) sum of its atomic and its non-atomic part.
The atomic part of $M^{**}$, $\mathcal{A},$ is precisely the weak$^*$-closure of the linear span of all minimal tripotents in $M^{**}$ (see \cite[Theorem 2]{FriRu85}). It is also known that $\mathcal{A} = \bigoplus_{j \in \Lambda} C_j$, where $\{C_j : j \in \Lambda\}$ is a family of Cartan factors (cf. \cite[Corollary 1.8]{Horn87b} and \cite[Proposition 2]{FriRu86}). It is further known that if $\imath_{M} : M \to M^{**}$ and $\pi_{at} : M^{**}\to \mathcal{A}$ denote the canonical inclusion of $M$ into its bidual and the projection of $M^{**}$ onto $\mathcal{A}$, respectively, then the mapping $\Phi = \pi_{at} \circ \imath_{M}$ is an isometric triple isomorphism with weak$^*$ dense image.\smallskip

The element $\Phi(u) = (u_j)_{j\in \Lambda}$ is a complete tripotent in $\mathcal{A}$, and $e$ belongs to a unique $C_{j_0}$. If $\sharp \Lambda\geq 2,$ we can find $j_1\neq j_0$ in $\Lambda$, and in this case, any minimal tripotent $w\in C_{j_1}$ with $w\leq u_{j_1}$ satisfies $w\leq u_{j_1}\leq \Phi(u) \leq u$ and $w\perp e$. We can therefore reduce to the case in which $\Lambda$ is a single element, and hence $\mathcal{A}$ is a Cartan factor. In the latter case the desired conclusion follows from Proposition \ref{p Cartan factors fo rank bigger than or equal to three satisfy P } because the rank of $M$ is smaller than or equal to the rank of $\mathcal{A}$.
\end{proof}

We shall finish this note by exploring the Mazur--Ulam property in the case of JBW$^*$-triples of rank one. Suppose $M$ is a JBW$^*$-triple of rank one. It is known that $M$ must be reflexive (see, for example, \cite[Proposition 4.5]{BuChu}). In particular $M$ must coincide with a rank one Cartan factor, and hence it must be isometrically isomorphic to a complex Hilbert space (cf. \cite[Table 1 in page 210]{Ka97}). It is due to G.G. Ding that every surjective isometry between the unit spheres of two Hilbert spaces admits a unique extension to a surjective real linear isometry between the spaces (see \cite{Ding2002}). Suppose now that $\Delta : S(H)\to S(Y)$ is a surjective isometry, where $H$ is a Hilbert space and $Y$ is a Banach space. Given $x,y\in S(Y)$ there exist $a,b\in S(H)$ satisfying $\Delta(a) =x $ and $\Delta(b)= y$. Having in mind that the set $\{b\}$ is maximal norm closed face of $\mathcal{B}_{H}$, we deduce from \cite[Lemma 8]{MoriOza2018} that $\Delta (-b) = -y$. Therefore, $$\left\| x+ y \right\|^2 + \left\| x- y \right\|^2 = \left\| \Delta(a)+ \Delta(b) \right\|^2 + \left\| \Delta(a)- \Delta(b) \right\|^2$$ $$ = \|a + b\|^2+ \|a - b\|^2 = 2 \|a\|^2 + 2\|b\|^2 = 4 .$$ It follows from \cite[Theorem 2.1]{Day47} that $Y$ is a Hilbert space. The previously quoted result of Ding in \cite{Ding2002} gives the next result.

\begin{proposition}\label{p rank one JBW-triples} Every Hilbert space satisfies the Mazur--Ulam property. Every rank one JBW$^*$-triple satisfies the Mazur--Ulam property. $\hfill\Box$
\end{proposition}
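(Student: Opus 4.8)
The plan is to reduce the proposition to the classical theorem of G.G. Ding on Hilbert spheres, the only real work being to show that the target of an isometry defined on a Hilbert sphere is forced to be a Hilbert space.

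\textbf{Reduction of the second assertion to the first.} First I would observe that if $M$ is a rank one JBW$^*$-triple then $M$ is reflexive (see \cite[Proposition 4.5]{BuChu}), so it must be a rank one Cartan factor, and by the classification of Cartan factors (cf. \cite[Table 1 in page 210]{Ka97}) a rank one Cartan factor is precisely, up to isometric isomorphism, a complex Hilbert space. Since the Mazur--Ulam property is preserved by surjective linear isometries, it is then enough to prove the first statement, namely that every Hilbert space $H$, regarded as a real Banach space, satisfies the Mazur--Ulam property.

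\textbf{The Hilbert space case.} Fix a Banach space $Y$ and a surjective isometry $\Delta : S(H)\to S(Y)$. The decisive point is to show that $Y$ is itself a Hilbert space; once this is established, the desired extension follows at once from Ding's theorem \cite{Ding2002}. To see that $Y$ is Hilbert I would check the parallelogram identity on $S(Y)$ and appeal to Day's characterization \cite[Theorem 2.1]{Day47}. Given $x,y\in S(Y)$, choose $a,b\in S(H)$ with $\Delta(a)=x$ and $\Delta(b)=y$. Because the unit ball of $H$ is strictly convex, each singleton $\{b\}$ with $b\in S(H)$ is a maximal proper norm closed face of $\mathcal{B}_H$; hence \cite[Lemma 8]{MoriOza2018} yields $\Delta(-b)=-\Delta(b)=-y$. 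Using that $\Delta$ preserves distances on the sphere and that $a,-b\in S(H)$ we then obtain
$$\|x+y\|^2+\|x-y\|^2 = \|\Delta(a)-\Delta(-b)\|^2+\|\Delta(a)-\Delta(b)\|^2 = \|a+b\|^2+\|a-b\|^2 = 2\|a\|^2+2\|b\|^2 = 4,$$
so the parallelogram law holds for all unit vectors of $Y$, and therefore $Y$ is a Hilbert space. Ding's result \cite{Ding2002} now provides the unique extension of $\Delta$ to a surjective real linear isometry from $H$ onto $Y$, completing the proof.

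\textbf{Main obstacle.} Every step above except one is routine bookkeeping: the reduction uses only the classification of rank one Cartan factors, and the parallelogram computation is a few lines. The single substantive ingredient is the identity $\Delta(-b)=-\Delta(b)$, valid with no a priori information about $Y$; this is exactly where the Mori--Ozawa machinery on maximal (``intersection'') faces, \cite[Lemma 8]{MoriOza2018}, is indispensable, and it is what makes the whole argument go through.
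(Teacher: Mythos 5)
Your proof is correct and follows essentially the same route as the paper: reduce the rank one case to a complex Hilbert space via reflexivity and the classification of rank one Cartan factors, use \cite[Lemma 8]{MoriOza2018} on the maximal face $\{b\}$ to get $\Delta(-b)=-\Delta(b)$, verify the parallelogram law on $S(Y)$ and invoke Day's characterization, and conclude with Ding's theorem. The only cosmetic difference is that you phrase the key computation via $\|\Delta(a)-\Delta(-b)\|$ rather than $\|\Delta(a)+\Delta(b)\|$, which is the same identity.
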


\begin{remark}\label{remark final}{\rm We have shown in the first part of the proof of Theorem \ref{t JBW-triples satisfy the MUP} that if $M$ is a JBW$^*$-triple satisfying property $(\mathcal{P})$, then, every surjective isometry from the unit sphere of $M$ onto the unit sphere of a Banach space $Y$ admits a unique extension to a surjective real linear isometry from $M$ onto $Y$. The proof of Theorem \ref{t JBW-triples satisfy the MUP} actually shows that every JBW$^*$-triple with rank bigger than or equal to three satisfies property $(\mathcal{P})$. We shall see next that there are other examples of JBW$^*$-triples satisfying property $(\mathcal{P})$.\smallskip

Suppose $M$ is a JBW$^*$-triple such that the atomic part of $M^{**}$ is not a Cartan factor of rank one or two (in particular when $M$ is not a factor). We claim that $M$ satisfies property $(\mathcal{P})$. Indeed, let $\mathcal{A}$ denote the atomic part of $M^{**}$. If $\mathcal{A}$ is a Cartan factor of rank bigger than or equal to three the proof of Theorem \ref{t JBW-triples satisfy the MUP} shows that $M$ satisfies property $(\mathcal{P})$. If $\mathcal{A}$ is an $\ell_{\infty}$-sum of at least two Cartan factors we have also seen in the proof of Theorem \ref{t JBW-triples satisfy the MUP} that $M$ satisfies property $(\mathcal{P})$.\smallskip

Let $E$ be a JB$^*$-triple. If the atomic part, $\mathcal{A}$, of $E^{**}$ is a Cartan factor of rank 2, or even more generally, a finite rank JBW$^*$-triple, then $\mathcal{A}$ is a reflexive Banach space (cf. \cite[Proposition 4.5]{BuChu} and \cite[Theorem 6]{ChuIo90}). As we have already commented, $E$ embeds isometrically into $\mathcal{A}$, and thus $E$ is reflexive and $E=E^{**}= \mathcal{A}$. \smallskip

In particular, if the atomic part of $E^{**}$ reduces to a rank one Cartan factor, then $E$ is a rank one JBW$^*$-triple and satisfies the Mazur--Ulam property by Proposition \ref{p rank one JBW-triples}.\smallskip

Summarizing, if $M$ is not a rank two Cartan factor, then $M$ satisfies the Mazur--Ulam property.}
\end{remark}

\medskip

\textbf{Acknowledgements} We would like to thank the anonymous referee for an outstanding and professional revision which helped us to avoid some difficulties appearing in earlier versions.\smallskip

Authors partially supported by the Spanish Ministry of Economy and Competitiveness (MINECO) and European Regional Development Fund project no. MTM2014-58984-P and Junta de Andaluc\'{\i}a grant FQM375.\smallskip


\begin{thebibliography}{0}


\bibitem{AkPed92} C.A. Akemann, G.K. Pedersen, Facial structure in operator algebra theory, \emph{Proc. Lond. Math. Soc.} \textbf{64}, 418-448 (1992).




\bibitem{BadFurGeMo2007} U. Bader, A. Furman, T. Gelander, N. Monod, Property $(T)$ and rigidity for actions on Banach spaces, \emph{Acta Math.} \textbf{198}, 57-105 (2007).

\bibitem{BarTi86} T. Barton, R.M. Timoney, Weak$^*$-continuity of Jordan triple products and applications, \emph{Math. Scand.} \textbf{59}, 177-191 (1986).

\bibitem{BarFri} T.J. Barton, Y. Friedman, Bounded derivations of
JB$^*$-triples, \emph{Quart. J. Math. Oxford Ser.} \textbf{41} (2), 255-268 (1990).




\bibitem{BraKaUp78} R.B. Braun, W. Kaup, H. Upmeier, A holomorphic characterization of Jordan-C$^*$-algebras, \emph{Math. Z.} \textbf{161}, 277-290 (1978).


\bibitem{BuChu} L.J. Bunce, C.-H. Chu,  Compact  operations, multipliers  and Radon-Nikodym property in $JB^*$-triples, {\it Pacific J. Math.} \textbf{153}, 249-265 (1992).

\bibitem{BuFerMarPe} L.J. Bunce, F.J. Fern\'andez-Polo, J. Mart{\'i}nez Moreno, A.M. Peralta, A Sait\^o-Tomita-Lusin theorem for JB$^*$-triples and applications, \emph{Quart. J. Math. Oxford Ser.} {\bf 57}, 37-48 (2006).

\bibitem{BurFerGarMarPe} M. Burgos, F.J. Fern{\'a}ndez-Polo, J. Garc{\'e}s, J. Mart{\'\i}nez, A.M. Peralta, Orthogonality preservers in C$^*$-algebras, JB$^*$-algebras and JB$^*$-triples, \emph{J. Math. Anal. Appl.} \textbf{348}, 220-233 (2008).

\bibitem{CaRo2018v2} M. Cabrera García, A. Rodríguez Palacios, \emph{Non-associative normed algebras. Vol. 2. Representation theory and the Zel'manov approach.} Encyclopedia of Mathematics and its Applications, 167. Cambridge University Press, Cambridge, 2018.


\bibitem{ChenDong2011} L. Cheng, Y. Dong, On a generalized Mazur--Ulam question: extension of isometries between unit spheres of Banach spaces, \emph{J. Math. Anal. Appl.} \textbf{377}, 464-470 (2011).




\bibitem{ChuIo90} C.-H. Chu, B. Iochum, Complementation of JBW$^*$-triples in von Neumann algebras, \emph{Proc. Amer. Math. Soc.}, \textbf{108}, 19-24 (1990).


\bibitem{CuePer} M. Cueto-Avellaneda, A.M. Peralta, The Mazur--Ulam property for commutative von Neumann algebras, to appear in  \emph{Linear and Multilinear Algebra}. arXiv:1803.00604


\bibitem{DanFri87} T. Dang, Y. Friedman, Classification of JBW$^*$-triple factors and applications, \emph{Math. Scand.} \textbf{61}, no. 2, 292-330 (1987).

\bibitem{Day47} M.M. Day, Some characterizations of inner-product spaces, \emph{Trans. Amer. Math. Soc.} \textbf{62}, 320-337 (1947).


\bibitem{Di86} S. Dineen, The second dual of a JB$^*$-triple system, In: Complex analysis, functional analysis and approximation theory (ed. by J. M\'ugica), 67-69, (North-Holland Math. Stud. 125), North-Holland, Amsterdam-New York, (1986).

\bibitem{Ding2002} G.G. Ding, The 1-Lipschitz mapping between the unit spheres of two Hilbert spaces can be extended to a real linear isometry of the whole space, \emph{Sci. China Ser. A} \textbf{45}, no. 4, 479-483 (2002).


\bibitem{Di:C} G.G. Ding, On the extension of isometries between unit spheres of $E$ and $C(\Omega)$, \emph{Acta.\ Math.\ Sin.\ (Engl.\ Ser.)} \textbf{19}, 793-800 (2003).



\bibitem{Ding07} G.G. Ding, The isometric extension of the into mapping from a $\mathcal{L}^\infty(\Gamma )$-type space to some Banach space, \emph{Illinois J. Math.} \textbf{51} (2), 445-453 (2007).

\bibitem{Ding2009} G.G. Ding, On isometric extension problem between two unit spheres, \emph{Sci. China Ser. A} \textbf{52}, 2069-2083 (2009).





\bibitem{EdFerHosPe2010} C.M. Edwards, F.J. Fern{\'a}ndez-Polo, C.S. Hoskin, A.M. Peralta, On the facial structure of the unit ball in a JB$^*$-triple, \emph{J. Reine Angew. Math.} \textbf{641}, 123-144 (2010).



\bibitem{EdRutt88} C.M. Edwards, G.T. R\"{u}ttimann, On the facial structure of the unit balls in a JBW$^*$-triple and its predual, \emph{J. Lond. Math. Soc.} \textbf{38}, 317-332 (1988).

\bibitem{EdRu96} C.M. Edwards, G.T. R\"uttimann, Compact tripotents in bi-dual JB$^*$-triples, Math. Proc. Camb. Phil. Soc. \textbf{120}, 155-173 (1996).



\bibitem{FerGarPeVill17} F.J. Fern\'andez-Polo, J.J. Garc{\'e}s, A.M. Peralta, I. Villanueva, Tingley's problem for spaces of trace class operators,  \emph{Linear Algebra Appl.} \textbf{529}, 294-323 (2017).

\bibitem{FerJorPer2018} F.J. Fernández-Polo, E. Jord{\'a}, A.M. Peralta, Tingley's problem for $p$-Schatten von Neumann classes, to appear in \emph{J. Spectr. Theory}. arXiv:1803.00763




\bibitem{FerPe06} F.J. Fern\'andez-Polo, A.M. Peralta, Closed tripotents and weak compactness in the dual space of a JB$^*$-triple, \emph{J. London Math. Soc.} \textbf{74}, 75-92 (2006).




\bibitem{FerPe10b} F.J. Fern\'andez-Polo, A.M. Peralta, Non-commutative generalisations of Urysohn's lemma and hereditary inner ideals, \emph{J. Funct. Anal.} \textbf{259}, 343-358 (2010).


\bibitem{FerPe17c} F.J. Fern\'andez-Polo, A.M. Peralta, Tingley's problem through the facial structure of an atomic JBW$^*$-triple, \emph{J. Math. Anal. Appl.} \textbf{455}, 750-760 (2017).

\bibitem{FerPe17d} F.J. Fern\'andez-Polo, A.M. Peralta, On the extension of isometries between the unit spheres of von Neumann algebras, \emph{J. Math. Anal. Appl.} \textbf{466}, 127-143 (2018).


\bibitem{FerPe18Adv} F.J. Fern\'andez-Polo, A.M. Peralta, Low rank compact operators and Tingley's problem, \emph{Adv. Math.} \textbf{338} 1-40 (2018).


\bibitem{FriRu85} Y.~{Friedman}, B.~{Russo}, Structure of the predual of a JBW$\sp*$-triple, \emph{J. Reine Angew. Math.} \textbf{356}, 67-89 (1985).

\bibitem{FriRu86} Y.~{Friedman}, B.~{Russo}, The Gelfand--Naimark Theorem for JB$^*$-triples, \emph{Duke Math. J.} \textbf{53}, 139-148 (1986).




\bibitem{Harris74} L.A. Harris, {Bounded symmetric homogeneous domains in infinite dimensional spaces}. In: \emph{Proceedings on infinite dimensional Holomorphy (Kentucky 1973)}; pp. 13-40. Lecture Notes in Math. 364. Berlin-Heidelberg-New York: Springer 1974.





\bibitem{Horn87} G. Horn, Characterization of the predual and ideal structure of a JBW$^*$-triple, \emph{Math. Scand.} \textbf{61}, no. 1, 117-133 (1987).

\bibitem{Horn87b} G. Horn, Classification of JBW$^*$-triples of type I, \emph{Math. Z.} \textbf{196}, 271-291 (1987).





\bibitem{JVMorPeRa2017} A. Jim{\'e}nez-Vargas, A. Morales-Campoy, A.M. Peralta, M.I. Ramírez, The Mazur--Ulam property for the space of complex null sequences, to appear in  \emph{Linear and Multilinear Algebra}. https://doi.org/10.1080/03081087.2018.1433625


\bibitem{KadRingII1986} R.V. Kadison, J.R. Ringrose,  \emph{Fundamentals of the theory of operator algebras. Vol. II. Advanced theory}. Pure and Applied Mathematics, 100, Academic Press, Inc., Orlando, FL, 1986.





\bibitem{Ka83} W. Kaup, A Riemann Mapping Theorem for bounded symmentric domains in complex Banach spaces, \emph{Math. Z.} \textbf{183}, 503-529 (1983).

\bibitem{Ka97} W. Kaup, On real Cartan factors, \emph{Manuscripta Math.} \textbf{92}, 191-222 (1997).

\bibitem{KaUp77} W. Kaup, H. Upmeier, Jordan algebras and symmetric Siegel domains in Banach spaces, \emph{Math. Z.} \textbf{157}, 179-200 (1977).



\bibitem{Li70} B. Li, \emph{Real operator algebras}, World Scientific Publishing Co., Inc., River Edge, NJ, 2003.

\bibitem{Liu2007} R. Liu, On extension of isometries between unit spheres of $\mathcal{L}^{\infty}(\Gamma)$-type space and a Banach space $E$, \emph{J. Math. Anal. Appl.} \textbf{333}, 959-970  (2007).



\bibitem{Mank1972} P. Mankiewicz, On extension of isometries in normed linear spaces, \emph{Bull. Acad. Pol. Sci., S\'{e}r. Sci. Math. Astron. Phys.} \textbf{20}, 367-371 (1972).


\bibitem{Megg98} R.E. Megginson, \emph{An Introduction to Banach Space Theory}, Springer-Verlag, New York, 1998.







\bibitem{Mori2017} M. Mori, Tingley's problem through the facial structure of operator algebras, \emph{J. Math. Anal. Appl.} \textbf{466}, no. 2, 1281-1298 (2018).

\bibitem{MoriOza2018} M. Mori, N. Ozawa, Mankiewicz's theorem and the Mazur--Ulam property for C$^*$-algebras, accepted for publication in \emph{Studia Math.} arXiv:1804.10674.



\bibitem{NavNav2012} J.C. Navarro-Pascual, M.A. Navarro, Unitary operators in real von Neumann algebras, \emph{J. Math. Anal. Appl.} \textbf{386}, no. 2, 933-938 (2012).









\bibitem{Pe2017} A.M. Peralta, Extending surjective isometries defined on the unit sphere of $\ell_\infty(\Gamma)$, \emph{Rev. Mat. Complut.} \textbf{32}, no. 1, 99-114 (2019). doi 10.1007/s13163-018-0269-2.


\bibitem{Pe2018} A.M. Peralta, A survey on Tingley's problem for operator algebras, \emph{Acta Sci. Math. (Szeged)} \textbf{84}, 81-123 (2018).

\bibitem{PeRo} A.M. Peralta, A. Rodr\'iguez, Grothendieck's inequalities for
real and complex JBW$^*$-triples, \emph{Proc. London Math. Soc.} {\bf 83}, 605-625 (2001).


\bibitem{PeTan16} A.M. Peralta, R. Tanaka, A solution to Tingley's problem for isometries between the unit spheres of compact C$^*$-algebras and JB$^*$-triples, to appear in \emph{Science China Mathematics}. https://doi.org/10.1007/s11425-017-9188-6



\bibitem{RuDye} B. Russo, H.A. Dye, A note on unitary operators in C$^*$-algebras, \emph{Duke Math. J.} \textbf{33}, 413-416 (1966).




\bibitem{Sidd2007} A.A. Siddiqui, Average of two extreme points in JBW$^*$-triples, \emph{Proc. Japan Acad.} Ser. A \textbf{83}, No. 9-10, 176-178 (2007).

\bibitem{Sidd2010} A.A. Siddiqui, A proof of the Russo--Dye theorem for JB$^*$-algebras, \emph{New York J. Math.} \textbf{16}, 53-60 (2010).






\bibitem{THL} D. Tan, X. Huang, R. Liu, Generalized-lush spaces and the Mazur--Ulam property, \emph{Studia Math.} \textbf{219}, 139-153 (2013).

\bibitem{Ta:8} D. Tan, Extension of isometries on unit sphere of $L^\infty$, \emph{Taiwanese J. Math.} \textbf{15}, 819-827 (2011).

\bibitem{Ta:1} D. Tan, On extension of isometries on the unit spheres of $L^p$-spaces for $0<p \leq 1$, \emph{Nonlinear Anal.} \textbf{74}, 6981-6987 (2011).

\bibitem{Ta:p} D. Tan, Extension of isometries on the unit sphere of $L^p$-spaces, \emph{Acta.\ Math.\ Sin.\ (Engl.\ Ser.)} \textbf{28}, 1197-1208 (2012).

\bibitem{TL} D. Tan, R. Liu, A note on the Mazur--Ulam property of almost-CL-spaces, \emph{J. Math.\ Anal.\ Appl.} \textbf{405}, 336--341 (2013).

\bibitem{Tan2014} R. Tanaka, A further property of spherical isometries, \emph{Bull. Aust. Math. Soc.}, \textbf{90}, 304-310 (2014).

\bibitem{Tan2016} R. Tanaka, The solution of Tingley's problem for the operator norm unit sphere of complex $n \times n$ matrices, \emph{Linear Algebra Appl.} \textbf{494}, 274-285 (2016).



\bibitem{Ting1987} D. Tingley, Isometries of the unit sphere, \emph{Geom. Dedicata} \textbf{22}, 371-378 (1987).


\bibitem{Wri77} J.D.M. Wright, Jordan C$^*$-algebras, \emph{Michigan Math. J.} \textbf{24} 291-302 (1977).

\bibitem{WriYou77} J.D.M. Wright, M.A. Youngson, A Russo--Dye theorem for Jordan C$^*$-algebras. In \emph{Functional analysis: surveys and recent results (Proc. Conf., Paderborn, 1976)}, pp. 279-282. North-Holland Math. Studies, Vol. 27; Notas de Mat., No. 63, North-Holland, Amsterdam, 1977.

\bibitem{YangZhao2014} X. Yang, X. Zhao, On the extension problems of isometric and nonexpansive mappings. In: \emph{Mathematics without boundaries}. Edited by Themistocles M. Rassias and Panos M. Pardalos. 725-748, Springer, New York, 2014.

\end{thebibliography}
\end{document}